\def\figurename{Figure} % (Replace the colon that normally
\renewcommand{\fnum@figure}[1]{\figurename~\thefigure.}
\def\tablename{Table} % (Replace the colon that normally
\renewcommand{\fnum@table}[1]{\tablename~\thetable.}
\newtheorem{theorem}{Theorem}[section]
\newtheorem{lemma}[theorem]{Lemma}
\newtheorem{corollary}[theorem]{Corollary}
\theoremstyle{definition}
\newtheorem{definition}[theorem]{Definition}
\theoremstyle{remark}
\newtheorem{remark}[theorem]{Remark}
\theoremstyle{definition}\newtheorem{assumption}{Assumption}[section]
\numberwithin{equation}{section}
\def\h{\mathcal H}
\def\o{\mathcal O}
\def\d{\mathrm d}
\def\q{\mathcal Q}
\def\L{\mathcal L}
\def\e{\epsilon}
\def\tr{\triangle}
\def\r{\rho}
\def\D{\mathcal D}
\begin{document}
%\vskip 0.4in
\title{\bfseries Continuity of the attractors in time-dependent spaces and applications
  \footnote{*Corresponding author: Zhijian Yang, e-mail: liyn@hrbeu.edu.cn, yzjzzut@tom.com. The first author is supported by the National
Natural Science Foundation  for Young Scientists  of China (No.12101155) and the
Natural Science Foundation of Heilongjiang Provience  of China (No. LH2021A001).
  The second author is supported by the National
Natural Science Foundation of China (No.12171438). } }
\author{Yanan Li$^1$, \ \ Zhijian Yang$^{2, *}$\\
${}^1$ College of Mathematical Sciences, Harbin Engineering University, 150001, China\\
${}^2$ School of Mathematics and Statistics, Zhengzhou University,  450001,   China}
\date{}
\maketitle \thispagestyle{empty} \setcounter{page}{1}

\begin{abstract} In this paper, we investigate the continuity of the attractors in time-dependent phase spaces. (i) We establish  two abstract criteria on the upper semicontinuity and the residual continuity of the  pullback $\mathscr D$-attractor  with respect to the perturbations,  and an equivalence criterion between  their continuity and the pullback equi-attraction, which generalize the
continuity theory of  attractors developed recently in  \cite{Hoang1,Hoang2}  to that in  time-dependent spaces.  (ii)  We propose  the notion  of pullback $\mathscr D$-exponential attractor, which includes the notion of time-dependent exponential attractor \cite{Ly-YJDDE} as its spacial case,   and establish its existence and H\"{o}lder continuity criterion  via quasi-stability method introduced originally by Chueshov
and Lasiecka \cite{Chueshov2008,Chueshov2015}. (iii) We apply  above-mentioned criteria  to the semilinear damped wave equations with perturbed time-dependent speed of propagation: $\e\rho(t) u_{tt}+\alpha u_t -\Delta u+f(u)=g$, with perturbation parameter $\e\in(0, 1]$, to realize  above mentioned continuity of pullback   $\mathscr D$ and $\mathscr D$-exponential attractors in time-dependent phase spaces, and the method developed here allows to  overcome  the difficulty of the hyperbolicity of the model. These results deepen and extend
recent theory of attractors in time-dependent spaces in literatures
 \cite{Di2011,pata,Kloeden2008}.
 \end{abstract}

\vspace{.08in} \noindent \textbf{Keywords}: Time-dependent phase space, pullback $\mathscr D$-attractor, pullback $\mathscr D$-exponential attractor, continuity of attractors, semilinear damped  wave equation.

\vspace{.08in}  \noindent \textbf{AMS subject classifications:} Primary:  37L15, 37L30; Secondary: 35B65, 35B40, 35B41.

\tableofcontents
\section{Introduction}

In the last decade, there have been more and more concern on the  theory of attractors in time-dependent spaces, which can be seen as a development of the theory of nonautonomous infinite dimensional dynamical system. Some key notations such as  pullback $\mathscr D$-attractors, time-dependent global and   exponential attractors in time-dependent spaces have been defined and the related theory including existence theorems has been established,  and there have been many applications in various mathematical physical models (cf. \cite{PataAJM1, PataAJM2,Di2011,Kloeden2008,pata,Ly-YJDDE,PataNARWA}).  In this paper, we shall investigate the theory on the continuity of the pullback $\mathscr D$-attractors, the existence and the continuity of the pullback $\mathscr D$-exponential  attractors and give their application to the semilinear damped wave equation with  perturbed  time-dependent speed of propagation
 \begin{equation}\label{06231}
 \rho_\e (t) u_{tt}+\alpha u_t -\Delta u+f(u)=g,
 \end{equation}
 where $\frac{1}{\rho_\e(t)}=\frac{1}{\e\rho(t)}$ is  called perturbed time-dependent speed of propagation \cite{pata} and the perturbation parameter $\e\in (0, 1]$.

 In 2008,  Kloeden, Mar\'in-Rubio and Real \cite{Kloeden2008} proposed the concept of  pullback $\mathscr D$-attractors for the  process acting on time-dependent phase spaces,  established an abstract criterion on the existence of the  pullback $\mathscr D$-attractors and gave its application to the semilinear heat equation in a non-cylindrical domain. For narrative convenience, we first state a few related definitions.
  \begin{definition}
   A process acting on time-dependent metric spaces $(X_t, d_t)$ is a two-parametrical family of operators $\{U(t, \tau): X_\tau\rightarrow X_t\ |\ \tau\leq t\in \mathbb R\}$ satisfying that
\begin{enumerate}[(i)]
  \item $U(\tau, \tau)$ is the identity mapping on $X_\tau$, $\forall \tau\in \mathbb R$;
  \item $U(t,s)U(s,\tau)=U(t,\tau)$, $-\infty<\tau\leq s\leq t<\infty$.
\end{enumerate}
\end{definition}

Define the   universe
\begin{align}
\mathscr D=\left\{\mathcal D=\{D(t)\}_{t\in \mathbb R}\ |\ \emptyset \neq D(t)\subset X_t, t\in \mathbb R, \ \hbox{and}\ \ \mathcal D\ \hbox{is of some properties}\right\}.
\end{align}

\begin{definition}\label{PDA} A family $\mathcal{A}=\{A(t)\}_{t\in\mathbb R}$ is called a pullback $\mathscr D$-attractor of the   process $U(t, \tau)$ acting on time-dependent metric spaces $(X_t, d_t)$, if\\
(i)\ $A(t)$ is a  compact subset of $X_t$ for each $t\in \mathbb R$;\\
(ii)\ $\mathcal A$ is invariant, that is,
  \begin{equation*}
  U(t, \tau)A(\tau)= A(t), \ \ -\infty<\tau\leq t<+\infty;
  \end{equation*}
(iii)\ $\mathcal A$ is a pullback $\mathscr D$-attracting family, that is, for any $\mathcal D\in \mathscr D$,
      \begin{equation*}
      \lim_{\tau\rightarrow -\infty} \mathrm{dist}_{X_t}\left(U(t, \tau) D(\tau), A(t)\right)=0, \ \ \ \forall t\in \mathbb R,
      \end{equation*}
      hereafter, $\mathrm{dist}_{X_t}(B, C)=\sup_{x\in B}\inf_{y\in C}d_t(x, y)$ denotes the Hausdorff semidistance between two nonempty subsets $B, C$ of $X_t$.

      In addition, the  pullback $\mathscr D$-attractor $\mathcal{A}$ is said to be minimal if  $A(t)\subset C(t)$ for all $t\in \mathbb R$ whenever $\mathcal C=\{C(t)\}_{t\in\mathbb R}$ is  a pullback $\mathscr D$-attracting family of non-empty closed sets.
   \end{definition}

\begin{remark}
 (i) Let  $\{X_t\}_{t\in \mathbb{R}}$ be a family of Banach spaces, $U(t, \tau)\in C(X_\tau, X_t)$,  and for every  $\D\in \mathscr D, \D$  be backward bounded, i.e., $\sup_{s\in (-\infty, t]}\|D(s)\|_{X_s}<+\infty$. Then  the related   pullback $\mathscr D$-attractor $\mathcal A$ becomes the  time-dependent global attractor proposed by Di Plinio et al. \cite{Di2011}.

 (ii)   Let $\{X_t\}_{t\in \mathbb{R}}$ be a family of normed linear spaces,   for every    $\D\in \mathscr D, \D$ be uniformly bounded, i.e., $\sup_{s\in \mathbb{R}}\|D(s)\|_{X_s}<\infty$, $\mathcal A\in \mathscr D$ and replace the invariance (ii) by the minimality in Definition \ref{PDA}. Then  $\mathcal A$ becomes the  time-dependent global attractor   proposed by Conti et al. \cite{pata}, where  the minimality ensures the uniqueness of $\mathcal A$, and  such an attractor is  invariant whenever
the process $U(t, \tau)$ is $T$-closed for some $T > 0$, i.e., $U(t, t-T)$ is closed for all $t\in \mathbb{R}$.
 \end{remark}

    Conti et al. \cite{pata}   developed the  theory of time-dependent global attractor initialed by Di Plinio et al. \cite{Di2011} and gave its application to the  nonautonomous semilinear damped wave equations  with time-dependent speed of propagation
 \begin{equation}\label{06191}
 \rho(t) u_{tt}+\alpha u_t -\Delta u+f(u)=g,
 \end{equation}
 where $\frac{1}{\rho(t)}$ stands for the time-dependent speed of propagation. They  exploited  new framework to
prove that the related process acting on   time-dependent phase spaces $\{\h_t\}_{t\in \mathbb R}$ has an   invariant   time-dependent global attractor $\mathcal A=\{A(t)\}_{t\in \mathbb R}$.  After that,  Conti and Pata \cite{PataNARWA}   supplemented the general theory with
two new results: the first gives the structure of the time-dependent attractor $\mathcal A=\{A(t)\}_{t\in \mathbb R}$ in terms of complete bounded trajectories of
the system; the second  provides
sufficient conditions in order for the section $A(t)$ to be close  to (in terms of Hausdorff semidistance) the global attractor $A_\infty$ of the limiting equation:
 \begin{equation*}
 \alpha u_t -\Delta u+f(u)=g
 \end{equation*}
as $t\rightarrow +\infty$  provided  that $\lim_{t\rightarrow +\infty}\rho(t)=0$,
 precisely,
   \begin{equation*}
 \lim_{t\rightarrow +\infty} \mathrm{dist}_{H^1}\left(\Pi_t A(t), \mathcal A_{\infty}\right)=0
  \end{equation*}
 where $\Pi_t: \h_t\rightarrow H^1$  is the projection on the first component of $\h_t,  \Pi_tA(t)=\{\xi\in H^1| (\xi,\eta)\in A(t)\}$.

 Replacing the weak damping $\alpha u_t $ in model \eqref{06191} by more complex nonlinear one  $\left[1+\rho(t) f'(u) \right]u_t$, Conti and Pata \cite{Conti2015AMC} studied the same issue for the corresponding  one dimensional heat conduction model of Cattaneo type.

These pioneering works promote the development of the attractor theory in time-dependent phase space. Since then,  there are many researches  on  the existence criteria of the pullback $\mathscr D$-attractor or time-dependent global attractor, as well as their applications in different  mathematical physical  models (cf. \cite{Kloeden2009, MTF, Meng2016, Song2019, Sun2015, Xiao2015,Zhou20181} and references therein).

 More recently, Conti et al. \cite{PataAJM1, PataAJM2}  established the well-posedness of   solutions, the existence and the regularity of the invariant time-dependent global attractor $\mathcal A=\{A(t)\}_{t\in \mathbb R}$ for the following   viscoelastic wave model with time-dependent memory kernel $k(t,s)$ in $\Omega\subset \mathbb{R}^3$:
 \begin{equation}\label{06221}
 u_{tt}(t)-[1+k(t,0)]\Delta u(t)-\int_0^\infty \partial_{s}k(t,s)\Delta u(t-s) \mathrm{d}s+f(u(t))=g,
 \end{equation}
 and showed that the section $A(t)$ is close  to (in the sense of the Hausdorff semidistance $\mathrm{dist}_{H^2\times H^1}(\cdot, \cdot)$) the global attractor $\hat{A}$ of the  following  Kelvin-Voigt type model
 \begin{equation*}
u_{tt}(t)-\Delta u(t)-m\Delta u_t(t)+f(u(t))=g
 \end{equation*}
 as $t\rightarrow \infty$ provided that $\lim_{t\rightarrow\infty} k(t,\cdot)=m\delta_0(\cdot)$ (in the distributional sense), where $\delta_0(\cdot)$  denotes the Dirac mass.

 Based on the works of \cite{PataAJM1,PataAJM2}, Li and Yang \cite{Ly-YJDDE} further  presented a notion of time-dependent exponential attractor, provided an abstract    existence criterion and gave its application to the model \eqref{06221} to establish the existence  and  the regularity of the related  time-dependent exponential attractors.

For comparison,
  we    give a short survey for the continuity theory of the global  and exponential attractors for the dynamical system on fixed phase space, i.e.,  $X_t\equiv X,\ \forall t\in \mathbb{R}$.
 One of the most common ways to describe the stability of global attractors is the upper semicontinuity in the sense of Hausdorff semidistance, there have been some abstract criteria and many of their applications to a variety of model equations with various  perturbations (see e.g. \cite{19, 22,Bortolan, 27, 31, Carvalho1, Freitas,150} and references therein). While the lower semicontinuity and therefore  the continuity of the  attractors is  very difficult for it requires strict conditions on the structure of the unperturbed attractor, which are rarely satisfied even for a slightly simpler global attractor of complicated systems (see Hale et al \cite{90,Hale1990},  Stuart and Humphries \cite{Stuart}).

  Recently, some advances on this issue have been archived.  Exploiting the Baire category theorem (cf. \cite{Oxtoby}) instead of   discussing the structure of the attractor, Babin and Pilyugin \cite{Babin1} showed  a residual continuity criterion, that is, the  global attractors are continuous with respect to (w.r.t.) perturbation parameter  $\lambda$ in a residual set of parameter space $\Lambda$.  Under more relaxed conditions, Hoang, Olson and Robinson \cite{Hoang1,Hoang2} simplified the proof of \cite{Babin1}  and established a few  residual continuity criteria of global, pullback and uniform attractors, respectively. Moreover, based on the works in \cite{Kloeden2, Kloeden3}, the authors  \cite{Hoang1,Hoang2} also proved a continuity criterion of the above-mentioned attractors on  $\Lambda$ (rather than a residual subset of $\Lambda$), and showed the equivalence between the continuity and the equi-attraction of the attractors. For the related research on this topic, one can see \cite{Aragao, Babin1,29, LiyangrongJDDE}.

However, as it says in \cite{70, 73}, the  global attractor may
have two  essential drawbacks:  (i) the concept lacks  the description on its fractal dimension; (ii) its attracting rare for the bounded subset in phase space may be
  arbitrarily slow, which leads to that   it is   difficult (if not impossible) to estimate
 in terms of the physical parameters of the system  and even makes it unobservable. Moreover,
 the global attractor is of, in general, upper semi-continuity and residual continuity  w.r.t. perturbations, which cause that  it may  change  drastically in the complementary set of the residual set under very
small perturbations.

The exponential attractor suggested originally in \cite{70}  overcomes these drawbacks of global attractor in  attracting rate, finiteness of fractal dimension  and   stability.  But due to the non-uniqueness of the exponential attractors,  the ``optimal" choice of an exponential attractor which is stable at every point in perturbation  parameter space is very important.

Comparing the attractor theory in time-dependent phase spaces with that in fixed phase space, one sees that the former is more complex and far from complete.  For example, what about
 the upper and lower semicontinuity of the pullback $\mathscr D$-attractors w.r.t. perturbations? Can we give a proper notion of the   pullback $\mathscr D$-exponential attractor and an existence criterion? What about the continuity of the pullback $\mathscr D$-exponential attractor w.r.t. perturbations? What about the applications of these abstract  criteria to the mathematical physical  models? All these questions are  unsolved.

The purpose of this paper is to solve these questions.  The main contributions  are as follows:

 (i)  We establish two abstract criteria on the upper semicontinuity and residual continuity of the pullback $\mathscr D$-attractor in the time-dependent phase spaces, respectively. (see Theorem \ref{31} and Theorem \ref{32})

  (ii) We show the equivalence between the continuity of the pullback $\mathscr D$-attractor w.r.t. the perturbation parameter and its  equi-attraction. (see Theorem \ref{36})

 (iii) We  present the notion of  the pullback $\mathscr D$-exponential attractor in time-dependent phase spaces, and provide  an abstract criterion on its existence and H\"{o}lder continuity via quasi-stability method introduced originally by Chueshov
and Lasiecka \cite{Chueshov2008,Chueshov2015}, which    is a continuation of  the
 researches on  exponential attractors in time-dependent phase spaces in recent literature \cite{Ly-YJDDE}. (see Theorem \ref{42}, Corollary \ref{ea1} and Corollary \ref{ea2})

 (iv) Applying above-mentioned criteria to the perturbed semilinear damped wave equation \eqref{06231} we show that under the same assumptions as in \cite{pata},

  (a) the related evolution process $U_\e (t, \tau)$ has a  pullback $\mathscr D$-attractor  $\mathcal A_\e=\{A_\e(t)\}_{t\in \mathbb R}$  for each $\e\in(0,1]$,  which is  upper semicontinuous  and residual continuous  w.r.t. the perturbation parameter $\e$, respectively; (see Theorem \ref{52})

   (b)   for every $\e_0\in (0, 1]$, there exists a family of   pullback $\mathscr D$-exponential attractors $\mathcal E_\e=\{E_\e(t)\}_{t\in \mathbb R}$ depending on $\e_0$, which is  H\"{o}lder continuous  at the  point $\e_0$. (see Theorem \ref{54})

 The features of these results  are that:

 (i) In the upper semicontinuity and the residual continuity criteria of the pullback $\mathscr D$-attractor,  we replace the  uniform requirement for all $t\in \mathbb R$  appearing  in the corresponding   criteria in fixed phase space with the relaxed backward uniform requirement for $t\leq t_0$, and remove the requirement for the uniform compactness of the sections of attractors as in literatures \cite{Hoang2},  which made  them being  more applicable.  In addition, we show the equivalence between the continuity and the equi-attraction of the pullback $\mathscr D$-attractor w.r.t.  perturbation parameter.

 (ii) In the  existence and  continuity criterion of the pullback $\mathscr D$-exponential attractors in  time-dependent phase spaces,  we replace the Banach  spaces with more general normed linear spaces,  and also replace the  uniform requirement for all $t\in \mathbb R$ with the relaxed $t\leq t_0$   because the former   is  rarely satisfied for the the nonlinear hyperbolic models  in  time-dependent phase spaces.

 (iii) The results of application  can be seen as an extension of those in \cite{pata} because under the same assumptions as in \cite{pata}, we show not only the upper semicontinuity  and residual continuity of the pullback $\mathscr D$-attractor of model \eqref{06231} w.r.t.   perturbation parameter $\e$,  but also the existence and the regularity of the  pullback $\mathscr D$-exponential attractors and  their H\"{o}lder continuity w.r.t.   perturbation parameter $\e\in (0, 1]$, which implies that the fractal dimension of the sections of the invariant time-dependent global attractor as shown  in \cite{pata} are uniformly bounded. The method developed here allows to  overcome  the difficulty of the hyperbolicity of the model.

 The paper is organized as follows. In Section 2, we establish two abstract criteria on the upper semicontinuity and the residual continuity of the pullback $\mathscr D$-attractor, respectively. In Section 3, we give the definition of the pullback $\mathscr D$-exponential attractors and discuss their existence and H\"{o}lder continuity criterion  at an abstract level. In Section 4, we apply the above mentioned criteria to model   \eqref{06231} to show  the continuity of the related  pullback $\mathscr D$-attractors,  and the existence and the H\"{o}lder continuity of the related pullback $\mathscr D$-exponential attractors w.r.t.   the perturbation  parameter $\e\in (0, 1]$.

\section{Continuity of pullback $\mathscr D$-attractor}
In this section, we  discuss the continuity of pullback $\mathscr D$-attractor $\mathcal A_\lambda=\{A_\lambda (t)\}_{t\in\mathbb{R}}$ w.r.t.  parameter $\lambda\in \Lambda$. For clarity, we first quote some notations, which will be used in the following sections.

Let $\{(X_t, d_t)\}_{t\in\mathbb{R}}$ be a family of metric spaces,
\begin{equation*}
  \mathbb{B}_t(x_0;R):=\{x\in X_t\ |\ d_t(x, x_0)\leq R\}
\end{equation*}
denote   the $R$-ball of $X_t$  centered at $x_0\in X_t$, especially,  $\mathbb{B}_t(R)= \mathbb{B}_t(0;R)$, and
\begin{equation*}
  \o^\e_t(B):=\bigcup_{x\in B}\o^\e_t(x):=\bigcup_{x\in B}\{y\in X_t\ |\ d_t(y, x)<\e\}
\end{equation*}
denote   the $\e$-neighborhood of a set $B\subset X_t$, $[B]_{X_t}$ denote   the closure of $B$ in $X_t$ and the letters $\mathcal A,\mathcal B, \cdots \mathcal E$ denote the time-dependent families, respectively,  for example
\begin{equation*}
  \mathcal D=\{D(t)\}_{t\in\mathbb{R}} \ \ \text{with}\ \ D(t)\subset X_t, \ \ \forall t\in\mathbb R.
\end{equation*}

We denote   the symmetric Hausdorff distance of two nonempty sets $B, C\subset X_t$ by
\begin{equation*}
  \mathrm{dist}^{symm}_{X_t}(B, C):=\max\{\mathrm{dist}_{X_t}(B, C), \mathrm{dist}_{X_t}(C, B)\}.
  \end{equation*}
 Then $(CB(X_t),  \mathrm{dist}^{symm}_{X_t})$ is a complete metric space, where $CB(X_t)$ is the collection of all nonempty
closed and  bounded subsets of $X_t$.

\begin{assumption}\label{3.1'}
Let $(\Lambda, \rho(\cdot, \cdot))$ be a complete metric space,  and $\{U_\lambda(t, \tau)\}_{\lambda\in\Lambda}$ be a family of parameterized  processes acting  on time-dependent metric spaces $(X_t, d_t)$.  Assume  that
\begin{description}
  \item[$(L_1)$]  $U_\lambda(t, \tau)$ has a pullback $\mathscr D$-attractor $\mathcal A_\lambda=\{A_\lambda (t)\}_{t\in\mathbb{R}}$ for each $\lambda\in \Lambda$;
  \item[$(L_2)$] there exist a  family   $\mathcal B=\{B(t)\}_{t\in\mathbb{R}}$, with $\emptyset\neq B(t)\subset X_t$, and a $t_0\in\mathbb{R}$  such that
  \begin{equation}\label{3.1}
  C(t):=\bigcup_{\lambda\in \Lambda}A_\lambda(t)\subset B(t), \ \ \forall t\leq t_0,
\end{equation}
  and
  $\mathcal A _\lambda$ pullback attracts $\mathcal B$, that is,
   \begin{equation*}
   \lim_{\tau\rightarrow -\infty} \mathrm{dist}_{X_t}\left(U_\lambda(t, \tau) B(\tau), A_\lambda(t)\right)=0, \ \ \ \forall t\in \mathbb R, \ \lambda\in \Lambda;
    \end{equation*}
      \item[$(L_3)$] for every $t\in \mathbb R$ and $s\leq \min\{t, t_0\}$,
  \begin{equation*}
    \lim_{\lambda\rightarrow \lambda_0}\sup_{x\in B(s)}d_t\left(U_\lambda(t, s)x,  U_{\lambda_0}(t, s)x\right)=0,  \ \ \forall \lambda_0\in\Lambda.
  \end{equation*}
\end{description}
\end{assumption}

\subsection{Upper semicontinuity of pullback $\mathscr D$-attractor}
We   show a criterion on the upper semicontinuity of pullback $\mathscr D$-attractor in time-dependent phase space in this subsection. For the related criterion on that in fixed metric space, i.e.,   $(X_t, d_t)\equiv(X, d), \forall t\in \mathbb R$, one can see  \cite{Carvalho1}.

\begin{theorem}\label{31} Let Assumption \ref{3.1'} hold. Then  the family of pullback $\mathscr D$-attractors $\mathcal A_\lambda=\{A_\lambda(t)\}_{t\in\mathbb{R}}$ is upper semicontinuous at each point $\lambda_0\in \Lambda$, that is,
\begin{equation}\label{3.0}
  \lim_{\lambda\rightarrow \lambda_0}\mathrm{dist}_{X_t}\left(A_\lambda(t), A_{\lambda_0}(t)\right)=0, \ \ \forall t\in\mathbb{R}.
\end{equation}
Moreover,   if the metric space $\Lambda$ is compact, then the family $\mathcal C=\{C(t)\}_{t\in \mathbb R}$ given by \eqref{3.1} possesses the following properties:
\begin{enumerate}[(i)]
  \item $C(t)$ is compact in $X_t$ for each  $t\in\mathbb{R}$;
  \item $\mathcal A_\lambda$ pullback attracts the family $\mathcal C$ for each $\lambda\in \Lambda$, that is,
      \begin{equation*}
      \lim_{\tau\rightarrow -\infty} \mathrm{dist}_{X_t} \left(U_\lambda(t, \tau) C(\tau), A_\lambda(t)\right)=0, \ \ \forall t\in \mathbb R.
      \end{equation*}
\end{enumerate}
\end{theorem}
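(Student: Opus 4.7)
The plan is a contradiction argument for the upper semicontinuity, followed by a fairly mechanical extraction argument using compactness of $\Lambda$ for the two remaining assertions.

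For \eqref{3.0}, I would fix $t \in \mathbb{R}$ and $\lambda_0 \in \Lambda$ and argue by contradiction. If upper semicontinuity fails, there exist $\varepsilon_0 > 0$, $\lambda_n \to \lambda_0$, and $a_n \in A_{\lambda_n}(t)$ with $d_t(a_n, A_{\lambda_0}(t)) \geq \varepsilon_0$. Use invariance of $\mathcal{A}_{\lambda_n}$ backwards in time: for any $\tau \leq t$, write $a_n = U_{\lambda_n}(t,\tau) b_n^\tau$ with $b_n^\tau \in A_{\lambda_n}(\tau)$. By $(L_2)$, for $\tau \leq t_0$ we have $b_n^\tau \in B(\tau)$, uniformly in $n$. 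Now insert $U_{\lambda_0}(t,\tau) b_n^\tau$ and split via the triangle inequality:
\begin{equation*}
d_t(a_n, A_{\lambda_0}(t)) \;\leq\; \sup_{x \in B(\tau)} d_t\bigl(U_{\lambda_n}(t,\tau)x,\, U_{\lambda_0}(t,\tau)x\bigr) \;+\; \mathrm{dist}_{X_t}\bigl(U_{\lambda_0}(t,\tau) B(\tau),\, A_{\lambda_0}(t)\bigr).
\end{equation*}
Given $\varepsilon_0 > 0$, first fix $\tau \leq \min\{t, t_0\}$ sufficiently negative so that the second term is less than $\varepsilon_0/2$ (possible by the $\lambda_0$-attraction in $(L_2)$). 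With this $\tau$ fixed, the first term tends to $0$ as $n \to \infty$ by $(L_3)$. For large $n$ the right-hand side is less than $\varepsilon_0$, contradicting the choice of $a_n$. The key point here is the order of quantifiers: attraction is only pointwise in $\lambda$, but after freezing $\tau$ we only need continuous dependence on $\lambda$ of the process itself, which is exactly what $(L_3)$ provides uniformly over $B(\tau)$.

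For part (i) under compactness of $\Lambda$, I would verify sequential compactness of $C(t)$. Let $\{x_n\} \subset C(t)$, say $x_n \in A_{\lambda_n}(t)$. Compactness of $\Lambda$ yields a subsequence (not relabelled) with $\lambda_n \to \lambda^* \in \Lambda$. By the upper semicontinuity just established, $\mathrm{dist}_{X_t}(A_{\lambda_n}(t), A_{\lambda^*}(t)) \to 0$, so one can pick $y_n \in A_{\lambda^*}(t)$ with $d_t(x_n, y_n) \to 0$. Since $A_{\lambda^*}(t)$ is compact, a further subsequence of $\{y_n\}$ converges to some $y \in A_{\lambda^*}(t) \subset C(t)$; the corresponding $x_{n_k}$ then converges to $y$ as well. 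This shows $C(t)$ is sequentially compact in the metric space $X_t$, hence compact.

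Part (ii) is essentially a one-liner: for every $\lambda \in \Lambda$ and every $\tau \leq t_0$, \eqref{3.1} gives $C(\tau) \subset B(\tau)$, hence
\begin{equation*}
\mathrm{dist}_{X_t}\bigl(U_\lambda(t,\tau) C(\tau),\, A_\lambda(t)\bigr) \;\leq\; \mathrm{dist}_{X_t}\bigl(U_\lambda(t,\tau) B(\tau),\, A_\lambda(t)\bigr),
\end{equation*}
and the right-hand side tends to $0$ as $\tau \to -\infty$ by $(L_2)$. I expect the only subtle step of the whole proof to be the contradiction in the upper semicontinuity: one has to realize that the correct way to exploit $(L_2)$ is to first freeze $\tau$ (using attraction only for $\lambda_0$) and then send $\lambda_n \to \lambda_0$ via $(L_3)$, rather than trying to pass to the limit $\lambda \to \lambda_0$ and $\tau \to -\infty$ simultaneously, which would require a uniform-in-$\lambda$ attraction that has not been assumed.
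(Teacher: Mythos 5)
Your proposal is correct and follows essentially the same route as the paper: both proofs use the invariance $A_\lambda(t)=U_\lambda(t,\tau)A_\lambda(\tau)\subset U_\lambda(t,\tau)B(\tau)$, then freeze $\tau\leq\min\{t,t_0\}$ using the pullback attraction of $\mathcal A_{\lambda_0}$ from $(L_2)$, and finally invoke $(L_3)$ uniformly over $B(\tau)$ — your contradiction framing is just a repackaging of the paper's direct $\epsilon/2+\epsilon/2$ inclusion chain, and your arguments for (i) and (ii) coincide with the paper's.
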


\begin{proof}  It follows from condition $(L_2)$ that for any   $\lambda_0\in \Lambda$, $t\in\mathbb{R} $ and $\e>0$,  there exists a $T>0$ such that $t-T\leq t_0$ and
\begin{equation*}
  \mathrm{dist}_{X_t}\left(U_{\lambda_0}(t, t-T)B(t-T), A_{\lambda_0}(t)\right)<\frac{\e}{2},
\end{equation*}
which means
\begin{equation}\label{3.2}
  U_{\lambda_0}(t, t-T)B(t-T)\subset \o_t^{\e/2}\left(A_{\lambda_0}(t)\right).
\end{equation}
 By  condition $(L_2)$ and the invariance of the pullback $\mathscr D$-attractor,
\begin{equation}\label{3.3}
 A_\lambda(t)=U_\lambda(t, t-T)A_\lambda(t-T)\subset U_{\lambda}(t, t-T)B(t-T), \ \ \forall \lambda\in \Lambda.
\end{equation}
By condition $(L_3)$,
\begin{equation*}
    \lim_{\lambda\rightarrow \lambda_0}\sup_{x\in B(t-T)}d_t\left(U_\lambda(t, t-T)x,  U_{\lambda_0}(t, t-T)x\right)=0,
  \end{equation*}
 which means that    for the given $\e>0$, there exists a  $\delta=\delta(\e)>0$ such that
  \begin{equation}\label{3.4}
 U_{\lambda}(t, t-T)B(t-T)\subset \o_t^{\e/2}\left( U_{\lambda_0}(t, t-T)B(t-T)\right)\ \ \hbox{as} \ \ \rho(\lambda, \lambda_0)<\delta.
  \end{equation}
  The combination of \eqref{3.2}-\eqref{3.4} shows that
  \begin{equation*}
    A_\lambda(t)\subset \o_t^{\e/2}\left( U_{\lambda_0}(t, t-T)B(t-T)\right)\subset \o_t^{\e}\left(A_{\lambda_0}(t)\right)\ \ \hbox{as} \ \ \rho(\lambda, \lambda_0)<\delta.
  \end{equation*}
    That is, $\mathrm{dist}_{X_t}\left(A_\lambda(t), A_{\lambda_0}(t)\right)<\e$. Hence,  the upper semicontinuity   \eqref{3.0} holds.

 Moreover,  for  any sequence $\{x_n\} \subset C(t)=\cup_{\lambda\in \Lambda}A_\lambda(t)\subset B(t)$,   there exists  a sequence $\{\lambda_n\}\subset \Lambda$ such that $x_n\in A_{\lambda_n}(t)$. Taking into account  the compactness of $\Lambda$,  we have (subsequence if necessary) $\lambda_n\rightarrow \lambda_0$ in $\Lambda$. By the compactness of $A_{\lambda_0}(t)$ in $X_t$ and formula \eqref{3.0}, there exists a   sequence $\{y_n\}\subset A_{\lambda_0}(t)$ such that
  \begin{equation*}
    \lim_{n\rightarrow \infty} d_t(x_n, y_n)=\lim_{n\rightarrow \infty}\mathrm{dist}_{X_t}\left(x_n, A_{\lambda_0}(t)\right)\leq \lim_{n\rightarrow \infty}\mathrm{dist}_{X_t}\left(A_{\lambda_n}(t), A_{\lambda_0}(t)\right)=0,
  \end{equation*}
  and there exists  a subsequence $\{y_{n_k}\}\subset \{y_n\}$ such that
  \begin{equation*}
    y_{n_k}\rightarrow x_0\in A_{\lambda_0}(t)\ \ \hbox{in}\ \ X_t.
  \end{equation*}
  Therefore,
  \begin{equation*}
   \lim_{k\rightarrow \infty} d_t\left(x_{n_k}, x_0\right)\leq  \lim_{k\rightarrow \infty} d_t\left(x_{n_k}, y_{n_k}\right)+ \lim_{k\rightarrow \infty} d_t\left(y_{n_k}, x_0\right)=0,
  \end{equation*}
which means that the set  $C(t)$ is  compact  in $X_t$ for each  $t\in\mathbb{R}$.

 Taking into account the fact  that   $C(t)\subset B(t)$ for all $t\leq t_0$ and  that $\mathcal A_\lambda$ pullback attracts the family $\mathcal B$, we have that $\mathcal A_\lambda$ pullback attracts the family $\mathcal C$ for each  $\lambda\in \Lambda$.
  \end{proof}

\begin{remark}\label{rem1}    Theorem \ref{31} implies that  when $\Lambda$ is a compact metric space,  we can take the family $\mathcal B=\{B(t)\}_{t\in \mathbb R}=\mathcal C=\{C(t)\}_{t\in \mathbb R}$ in  conditions $(L_2)-(L_3)$, in other words, conditions $(L_2)-(L_3)$ hold on $\mathcal C$.
\end{remark}

\subsection{Residual continuity of pullback $\mathscr D$-attractor}

In this subsection,  we show a residual continuity  criterion on the pullback $\mathscr D$-attractor in time-dependent phase spaces.  For the  related  criteria on  the residual continuity of global, uniform and pullback attractors in fixed metric space,  one can see  \cite{Babin1, Hoang1, Hoang2}. For clarity, we first quote a definition of the residual subset.

\begin{definition}\label{Residual}
A set is said to be nowhere dense if its closure contains no nonempty open sets.  A set  is said to be a \textit{residual} set if its complement is a countable union of  nowhere dense sets.
\end{definition}

Any residual subset of the complete metric space $\Lambda$ is dense in $\Lambda$.

\begin{theorem}\label{32} Let Assumption \ref{3.1'} be vaild and  $\{U_\lambda(t, \tau)\}_{\lambda\in \Lambda}$ be a family of continuous processes, i.e., $U_\lambda(t,\tau):X_\tau\rightarrow X_t$ is a continuous  operator for each $t\geq \tau$ and $\lambda\in \Lambda$. Assume that either (i) the set $B(t)$ is compact in $X_t$ for all $t\leq t_0$  or (ii) the metric space  $\Lambda$ is compact.
   Then  there exists a residual subset $\Lambda^*$ of $\Lambda$ such that the pullback $\mathscr D$-attractor $\mathcal A_\lambda=\{A_\lambda(t)\}_{t\in\mathbb{R}}$ is continuous at each point $\lambda_0\in \Lambda^*$, i.e.,
  \begin{equation}\label{3.5}
    \lim_{\lambda\rightarrow \lambda_0}\mathrm{dist}_{X_t}^{symm}\left(A_\lambda(t), A_{\lambda_0}(t)\right)=0,\ \ \forall  t\in\mathbb{R}.
  \end{equation}
\end{theorem}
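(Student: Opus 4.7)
The strategy is to combine Theorem \ref{31} (upper semicontinuity at every $\lambda_0$) with a Baire category argument in the spirit of Hoang--Olson--Robinson \cite{Hoang1,Hoang2}, applied on the complete metric space $(CB(X_t),\mathrm{dist}^{symm}_{X_t})$, and then to patch the resulting countably many residual sets into a single one using the invariance of the attractor together with hypothesis $(L_3)$.

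I would first fix a time $s$ for which $[C(s)]_{X_s}$ is compact in $X_s$: under case (i) any $s\le t_0$ works because $C(s)\subset B(s)$, while under case (ii) any $s\in\mathbb{R}$ works by Theorem \ref{31}. For such an $s$, view $\Phi_s:\lambda\mapsto A_\lambda(s)$ as a map from $\Lambda$ into the complete metric space of non-empty compact subsets of $[C(s)]_{X_s}$ under $\mathrm{dist}^{symm}_{X_s}$. Theorem \ref{31} already furnishes $\mathrm{dist}_{X_s}(A_\lambda(s),A_{\lambda_0}(s))\to 0$ (the one-sided direction). To upgrade this to symmetric continuity on a residual set, I would define, for each $n\in\mathbb{N}$,
$$E_{s,n}:=\Bigl\{\lambda_0\in\Lambda\ :\ \limsup_{\lambda\to\lambda_0}\mathrm{dist}^{symm}_{X_s}(A_\lambda(s),A_{\lambda_0}(s))\ge \tfrac{1}{n}\Bigr\},$$
and show each $E_{s,n}$ is nowhere dense by the following combinatorial argument: given any non-empty open $V\subset\Lambda$, cover the totally bounded set $[C(s)]_{X_s}$ by finitely many balls $B_1,\ldots,B_N$ of radius $\ll 1/n$; the ``fingerprint'' map $S:\lambda\mapsto\{i:B_i\cap A_\lambda(s)\ne\emptyset\}$ takes only finitely many values, so by Baire applied to $V$, some $\overline{\{\lambda\in V:S(\lambda)=S_0\}}$ has non-empty interior $W\subset V$; combined with the one-sided semicontinuity from Theorem \ref{31}, this forces the Hausdorff diameter of $\{A_\lambda(s):\lambda\in W\}$ to be $<1/n$, so $W\cap E_{s,n}=\emptyset$. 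Consequently $\Lambda_s^*:=\Lambda\setminus\bigcup_n E_{s,n}$ is residual and $\Phi_s$ is continuous at every point of $\Lambda_s^*$.

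Next, I would select a sequence $\{s_k\}_{k\in\mathbb{N}}$ with $s_k\to-\infty$ (taking $s_k\le t_0$ for all $k$ under case (i)), and set $\Lambda^*:=\bigcap_{k}\Lambda_{s_k}^*$, which remains residual by Baire's theorem. To transfer continuity from the $s_k$ to an arbitrary $t\in\mathbb{R}$, fix $\lambda_0\in\Lambda^*$ and choose $k$ so large that $s_k\le\min\{t,t_0\}$. Invariance gives $A_\lambda(t)=U_\lambda(t,s_k)A_\lambda(s_k)$, hence
$$\mathrm{dist}^{symm}_{X_t}(A_\lambda(t),A_{\lambda_0}(t))\le I_1(\lambda)+I_2(\lambda),$$
where $I_1(\lambda):=\mathrm{dist}^{symm}_{X_t}(U_\lambda(t,s_k)A_\lambda(s_k),\,U_{\lambda_0}(t,s_k)A_\lambda(s_k))$ and $I_2(\lambda):=\mathrm{dist}^{symm}_{X_t}(U_{\lambda_0}(t,s_k)A_\lambda(s_k),\,U_{\lambda_0}(t,s_k)A_{\lambda_0}(s_k))$. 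Since $A_\lambda(s_k)\subset B(s_k)$ for $s_k\le t_0$, condition $(L_3)$ forces $I_1(\lambda)\to 0$ as $\lambda\to\lambda_0$; and because the continuous map $U_{\lambda_0}(t,s_k)$ is uniformly continuous on the compact set $[C(s_k)]_{X_{s_k}}$, continuity of $\Phi_{s_k}$ at $\lambda_0\in\Lambda_{s_k}^*$ makes $I_2(\lambda)\to 0$ as well. This establishes \eqref{3.5} for every $t\in\mathbb{R}$.

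The principal obstacle is the Baire step: proving that each $E_{s,n}$ is nowhere dense requires executing the fingerprint-constant argument so that the implicit enlargement of the cover balls dictated by the one-sided semicontinuity from Theorem \ref{31} does not swamp the target accuracy $1/n$. A secondary subtlety is that compactness of $[C(t)]_{X_t}$ is \emph{not} guaranteed for $t>t_0$ under case (i); the transfer step deliberately sidesteps this by always reducing to times $s_k\le t_0$ where $C(s_k)\subset B(s_k)$ is compact by hypothesis.
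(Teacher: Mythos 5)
Your overall architecture --- reduce to times $s\le t_0$ where the union of the attractor sections is precompact, obtain residual continuity of $\lambda\mapsto A_\lambda(s)$ there, and then transfer to arbitrary $t$ via invariance, condition $(L_3)$ and uniform continuity of $U_{\lambda_0}(t,s)$ on a compact set --- matches the paper's, and your transfer step is essentially the paper's Lemma \ref{35} argument. The gap is in the core Baire step. Knowing that the fingerprint $S$ equals $S_0$ on a set whose closure contains the open set $W$ only tells you that $\mathrm{dist}^{symm}_{X_s}(A_\mu(s),A_\nu(s))\le 2r$ for $\mu,\nu$ in a \emph{dense subset} of $W$; to conclude that the Hausdorff diameter of $\{A_\lambda(s):\lambda\in W\}$ is small you must pass from that dense subset to an arbitrary $\lambda_0\in W$, and the only tool available is the one-sided estimate $\mathrm{dist}_{X_s}(A_\mu(s),A_{\lambda_0}(s))\to 0$ as $\mu\to\lambda_0$ from Theorem \ref{31}. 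That controls the wrong inclusion: it never bounds $\mathrm{dist}_{X_s}(A_{\lambda_0}(s),A_\mu(s))$, which is exactly the lower semicontinuity you are trying to establish. A concrete counterexample to the step as stated: $F(\lambda)=\{0\}$ for $\lambda\ne 0$ and $F(0)=[0,1]$ is upper semicontinuous at every point, the fingerprint is constant on the dense subset $V\setminus\{0\}$ of any neighbourhood $V$ of $0$, so your argument would place $0$ inside an open set $W$ disjoint from every $E_{s,n}$, yet $F$ is discontinuous at $0$. (The conclusion you want is still true --- it is Fort's theorem for upper semicontinuous compact-valued maps --- but that requires a different proof, e.g. via the lower semicontinuous functions $\lambda\mapsto\mathrm{dist}_{X_s}(y_i,A_\lambda(s))$ for a countable dense family $\{y_i\}$ in $[C(s)]_{X_s}$, and you neither prove nor invoke it.)

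The paper sidesteps this entirely by using the attracting half of $(L_2)$, which your Baire step never touches: by invariance and attraction, $A_\lambda(n)=\lim_{s\to-\infty}U_\lambda(n,s)B(s)$ in the \emph{symmetric} Hausdorff metric, each approximant $\lambda\mapsto[U_\lambda(n,s)B(s)]_{X_n}$ is genuinely continuous into $CB(X_n)$ (Lemma \ref{34}), and the Hoang--Olson--Robinson lemma for pointwise limits of continuous maps (Lemma \ref{33}) yields a residual set of continuity points of the limit; that residual set is precisely where the convergence as $s\to-\infty$ is locally uniform, which is the quantitative control your fingerprint argument lacks. If you replace your nowhere-density argument for $E_{s,n}$ by this limit representation (or by a correct proof of Fort's theorem), the remainder of your proof goes through.
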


By Remark \ref{rem1}, when $\Lambda$ is a compact metric space,  the conditions $(L_2)-(L_3)$ of Assumption \ref{3.1'} hold on  the family $\mathcal C=\{C(t)\}_{t\in \mathbb R}$ as shown in   \eqref{3.1}, and Theorem  \ref{31}
shows that each section $C(t)$    is compact in $X_t$.
 Thus, it is enough  to prove Theorem \ref{32} in  case (i) for one only needs to replace $\mathcal B$ there  by  $\mathcal C$ in case (ii). In order to prove Theorem \ref{32}, we  need the following lemmas.

\begin{lemma}\label{33}\cite{Hoang1} Let $f_n:X\rightarrow Y$ be a continuous map for each $n\in\mathbb{N}$, where $X$ is a complete metric space and $Y$ is a metric space. Assume that $f$ is the pointwise limit of $f_n$, i.e.,
\begin{equation*}
  f(x)=\lim_{n\rightarrow \infty}f_n(x), \ \ \forall x\in X.
\end{equation*}
Then, there exists a residual subset $\mathcal R$ of $X$ such that $f$ is continuous at every point  $x\in \mathcal R$.
 \end{lemma}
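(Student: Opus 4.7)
The plan is to apply the Baire category theorem to the oscillation function of $f$. Define the oscillation $\omega_f(x) := \inf_{U \ni x} \operatorname{diam} f(U)$, where $U$ ranges over open neighborhoods of $x$ in $X$; then $f$ is continuous at $x$ if and only if $\omega_f(x) = 0$, and for each $\epsilon > 0$ the set $\{x \in X : \omega_f(x) < \epsilon\}$ is open, since any $U$ witnessing the inequality at one point witnesses it at every point of $U$. Consequently the set of discontinuities is the $F_\sigma$ set $\bigcup_{k \geq 1} D_k$ with $D_k := \{x \in X : \omega_f(x) \geq 1/k\}$ closed. I would reduce the theorem to showing that each $D_k$ is nowhere dense, so that $\mathcal{R} := X \setminus \bigcup_k D_k$ is a residual subset of $X$ consisting of continuity points of $f$.

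To show $D_k$ is nowhere dense, I would argue by contradiction: suppose some nonempty closed ball $\overline{B} \subset X$ satisfies $\omega_f(x) \geq 1/k$ for all $x \in \overline{B}$. Introduce the closed sets
\begin{equation*}
E_m := \bigcap_{l \geq m}\{x \in X : d_Y(f_m(x), f_l(x)) \leq 1/(5k)\}, \qquad m \in \mathbb{N},
\end{equation*}
each of which is closed because $f_m$ and $f_l$ are continuous. Pointwise convergence of the sequence $(f_n)$ yields $X = \bigcup_{m} E_m$, hence $\overline{B} = \bigcup_{m}(\overline{B} \cap E_m)$. Since $\overline{B}$ is complete (being closed in the complete metric space $X$), the Baire category theorem provides some index $m$ and a relatively open subset of $\overline{B}$ contained in $E_m$; shrinking $\overline{B}$ at the outset if necessary, this can be taken to be an open set $V \subset X$. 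Passing to the limit $l \to \infty$ inside the definition of $E_m$ yields $d_Y(f_m(x), f(x)) \leq 1/(5k)$ for every $x \in V$.

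Now fix $x_0 \in V$ and invoke continuity of $f_m$ at $x_0$ to select an open neighborhood $W \subset V$ of $x_0$ with $d_Y(f_m(y), f_m(x_0)) \leq 1/(5k)$ for $y \in W$. For any $y, z \in W$ a four-term triangle inequality then gives $d_Y(f(y), f(z)) \leq 4/(5k)$, so $\operatorname{diam} f(W) \leq 4/(5k)$ and $\omega_f(x_0) \leq 4/(5k) < 1/k$, contradicting $x_0 \in \overline{B} \subset D_k$. Therefore every $D_k$ has empty interior, hence is nowhere dense, and $\mathcal{R}$ is the desired residual set. The delicate point is calibrating the constants: the threshold in the definition of $E_m$ must be strictly smaller than $1/(4k)$ so that the triangle inequality in the final step produces a strict inequality against $1/k$; the choice $1/(5k)$ is a convenient safe margin, and no quantitative refinement of Baire is required.
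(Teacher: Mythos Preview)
Your argument is the classical Baire-category proof of this well-known fact (pointwise limits of continuous maps are continuous on a residual set), and it is correct. The paper itself does not supply a proof of this lemma: it is simply quoted from \cite{Hoang1}, so there is nothing in the paper to compare your approach against.

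One small technical remark: when you apply Baire to the closed ball $\overline{B}$ and obtain a \emph{relatively} open subset of $\overline{B}$ lying in $E_m$, the passage to an open set $V\subset X$ deserves a word more of care. The cleanest way is to note that the open ball $G\subset D_k$ is itself a Baire space (open subsets of complete metric spaces are Baire), write $G=\bigcup_m(G\cap E_m)$ with each $G\cap E_m$ closed in $G$, and conclude that some $G\cap E_m$ has nonempty interior in $G$; since $G$ is open in $X$, this interior is automatically open in $X$. With that tweak the contradiction via the four-term triangle inequality goes through exactly as you wrote, and your choice of threshold $1/(5k)$ gives the needed strict inequality $4/(5k)<1/k$.
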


\begin{lemma}\label{34}  Let the assumptions of Theorem \ref{32} be valid. Then  the mapping $\lambda \mapsto  [U_\lambda(t, s)B(s)]_{X_t}$ is continuous from $\Lambda$ into $CB(X_t)$  for all $t\in\mathbb{R}$ and $s\leq \min\{t, t_0\}$.
\end{lemma}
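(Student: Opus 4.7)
The plan is to show directly that both halves of the symmetric Hausdorff distance between $[U_\lambda(t,s)B(s)]_{X_t}$ and $[U_{\lambda_0}(t,s)B(s)]_{X_t}$ tend to zero as $\lambda\to\lambda_0$, using the uniform-in-$x$ convergence furnished by condition $(L_3)$. A preliminary observation simplifies the bookkeeping: since $s\leq t_0$, the set $B(s)$ is compact in $X_s$, and because $U_\lambda(t,s)\colon X_s\to X_t$ is continuous, the image $U_\lambda(t,s)B(s)$ is compact (hence closed and bounded) in $X_t$. Therefore $[U_\lambda(t,s)B(s)]_{X_t}=U_\lambda(t,s)B(s)$ already belongs to $CB(X_t)$, and it suffices to prove the continuity of the map $\lambda\mapsto U_\lambda(t,s)B(s)$ in the symmetric Hausdorff metric.

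Fix $\lambda_0\in\Lambda$ and $\e>0$. Since $s\leq\min\{t,t_0\}$, condition $(L_3)$ provides a $\delta=\delta(\e)>0$ such that
\begin{equation*}
\sup_{x\in B(s)} d_t\bigl(U_\lambda(t,s)x,\,U_{\lambda_0}(t,s)x\bigr)<\e\quad\text{whenever}\quad \rho(\lambda,\lambda_0)<\delta.
\end{equation*}
Given any $z\in U_\lambda(t,s)B(s)$, write $z=U_\lambda(t,s)x$ for some $x\in B(s)$; then $w:=U_{\lambda_0}(t,s)x\in U_{\lambda_0}(t,s)B(s)$ satisfies $d_t(z,w)<\e$, so
\begin{equation*}
\mathrm{dist}_{X_t}\bigl(U_\lambda(t,s)B(s),\,U_{\lambda_0}(t,s)B(s)\bigr)\leq\e.
\end{equation*}
Interchanging the roles of $\lambda$ and $\lambda_0$ and repeating the argument with the same $\delta$ yields the reverse estimate; combining them gives
\begin{equation*}
\mathrm{dist}^{symm}_{X_t}\bigl(U_\lambda(t,s)B(s),\,U_{\lambda_0}(t,s)B(s)\bigr)\leq\e,
\end{equation*}
which is exactly the continuity claimed.

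No genuine obstacle arises here: the lemma is essentially a packaging of the elementary fact that uniform convergence on a compact set forces Hausdorff convergence of the images. The continuity hypothesis on $U_\lambda$ together with the compactness of $B(s)$ is used only to guarantee that the images remain in $CB(X_t)$, so that the target space is well-defined. In case (ii) of Theorem \ref{32} (where $\Lambda$ is compact), the same argument applies verbatim after replacing $\mathcal B$ by the compact family $\mathcal C$ from Remark \ref{rem1}.
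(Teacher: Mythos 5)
Your proof is correct and follows essentially the same route as the paper: the paper likewise notes that compactness of $B(s)$ and continuity of $U_\lambda(t,s)$ make the map well defined into $CB(X_t)$, and then omits the continuity argument by citing Lemma 3.1 of Hoang--Olson--Robinson, whose proof is precisely your observation that the uniform convergence in $(L_3)$ forces both halves of the symmetric Hausdorff distance below $\e$.
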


\begin{proof} For any   $\lambda\in \Lambda$, $t\in\mathbb R$ and $s\leq \min\{t, t_0\}$, it follows  from the continuity of operator $U_\lambda(t, s): X_s \rightarrow X_t$ and the compactness of $B(s)$ in $X_s$ that
\begin{equation*}
U_\lambda(t, s)B(s)=\big[U_\lambda(t, s)B(s) \big]_{X_t}\in CB(X_t).
\end{equation*}
 Thus, the mapping $\lambda \mapsto \left[U_\lambda(t, s)B(s)\right]_{X_t}$ is from $\Lambda$ into $CB(X_t)$. Repeating  the similar argument as Lemma 3.1 in \cite{Hoang1,Hoang2}, one   easily obtains  the continuity of this mapping.   We omit the details  here.
\end{proof}

\begin{lemma}\label{35}   Let the assumptions of Theorem \ref{32} be valid. Then  the mapping $(\lambda, B)\mapsto U_\lambda(t, s) B$ is continuous at every point  $(\lambda, B)\in \Lambda\times CB(B(s))$ for all $t\in \mathbb R$ and $s\leq \min\{t, t_0\}$.
\end{lemma}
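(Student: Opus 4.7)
The plan is to verify continuity at an arbitrary point $(\lambda_0, B_0)\in \Lambda\times CB(B(s))$ by a triangle-inequality decomposition and then treat the two resulting pieces separately. The structural fact we need is the compactness of $B(s)$ in $X_s$: in case (i) this is a direct hypothesis, while in case (ii) it is supplied by Theorem \ref{31} together with Remark \ref{rem1}, which allow us to replace $\mathcal B$ by $\mathcal C=\{C(t)\}_{t\in\mathbb R}$ whose sections are compact, so that the same argument applies with $\mathcal B$ in the role of $\mathcal C$.

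Concretely, for $(\lambda, B)$ close to $(\lambda_0, B_0)$ I would insert $U_{\lambda_0}(t,s)B$ as an intermediate set and invoke the triangle inequality for $\mathrm{dist}^{symm}_{X_t}$:
\begin{align*}
\mathrm{dist}^{symm}_{X_t}\bigl(U_\lambda(t,s)B,\,U_{\lambda_0}(t,s)B_0\bigr)
&\leq \mathrm{dist}^{symm}_{X_t}\bigl(U_\lambda(t,s)B,\,U_{\lambda_0}(t,s)B\bigr) \\
&\quad + \mathrm{dist}^{symm}_{X_t}\bigl(U_{\lambda_0}(t,s)B,\,U_{\lambda_0}(t,s)B_0\bigr).
\end{align*}
For the first summand, using the natural pairing $x\mapsto x$ within $B\subset B(s)$ one immediately obtains
\[
\mathrm{dist}^{symm}_{X_t}\bigl(U_\lambda(t,s)B,\,U_{\lambda_0}(t,s)B\bigr)\leq \sup_{x\in B(s)} d_t\bigl(U_\lambda(t,s)x,\,U_{\lambda_0}(t,s)x\bigr),
\]
which tends to $0$ as $\lambda\to\lambda_0$ by condition $(L_3)$. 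For the second summand, continuity of $U_{\lambda_0}(t,s):X_s\to X_t$ combined with the compactness of $B(s)$ yields uniform continuity on $B(s)$: given $\e>0$ there exists $\delta>0$ such that $x,y\in B(s)$ with $d_s(x,y)<\delta$ implies $d_t(U_{\lambda_0}(t,s)x, U_{\lambda_0}(t,s)y)<\e$. A routine verification then shows that $\mathrm{dist}^{symm}_{X_s}(B,B_0)<\delta$ forces the second summand to be at most $\e$, so it tends to $0$ as $B\to B_0$ in $CB(B(s))$.

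The main obstacle I anticipate is handling the second summand without any quantitative continuity assumption on $U_{\lambda_0}(t,s)$: in the absence of a Lipschitz or H\"{o}lder bound, the only way to bridge the gap between $B$ and $B_0$ is through uniform continuity, which crucially requires the compactness of $B(s)$. Once both estimates above are established, letting $\lambda\to\lambda_0$ and $B\to B_0$ jointly yields the desired continuity at $(\lambda_0, B_0)$.
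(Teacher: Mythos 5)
Your proposal is correct and follows essentially the same route as the paper: both arguments combine condition $(L_3)$ (to control the variation in $\lambda$) with the uniform continuity of $U_{\lambda_0}(t,s)$ on the compact set $B(s)$ (to control the variation in $B$), the only difference being that you apply the triangle inequality at the level of the Hausdorff metric on $CB(X_t)$ while the paper applies it pointwise to $d_t\left(U_\lambda(t,s)b, U_{\lambda_0}(t,s)b_0\right)$ for a best-approximating $b_0\in B_0$. Your reduction of case (ii) to case (i) via Theorem \ref{31} and Remark \ref{rem1} also matches the paper's treatment.
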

\begin{proof}  It follows from condition $(L_3)$ that for any  $(\lambda_0, B_0)\in \Lambda\times CB(B(s))$ and $\e>0$, there exists a $\delta=\delta(\e)>0$ such that
\begin{equation}\label{3.6}
\sup_{x\in B(s)}d_t\left(U_\lambda(t, s)x, U_{\lambda_0}(t, s)x\right)<\frac{\e}{2}\ \ \hbox{as}\ \   \rho(\lambda, \lambda_0)<\delta.
\end{equation}
Since $B(s)$ is compact in $X_s$ and the operator $U_\lambda(t, s): X_s\rightarrow X_t$ is continuous, $U_\lambda(t, s)$ is uniformly continuous on $B(s)$ and $U_\lambda(t, s)B$ is compact in $X_t$ for all $(\lambda, B)\in \Lambda\times CB(B(s))$. Thus  there exists a constant $\delta_1:0<\delta_1<\delta$ such that for every $x, y\in B(s)$ with $d_s(x, y)<\delta_1$,
\begin{equation}\label{3.7}
  d_t\left(U_{\lambda_0}(t, s)x, U_{\lambda_0}(t, s)y\right)<\frac{\e}{2}.
\end{equation}
For any  point $(\lambda, B)\in \Lambda\times CB(B(s))$ with
\begin{equation}\label{c1}
\rho(\lambda, \lambda_0)<\delta_1\ \ \hbox{and}\ \ \mathrm{dist}^{symm}_{X_s}(B, B_0)<\delta_1,
\end{equation}
it follows  from  the compactness of $B_0$ in $X_s$ that  for every $b\in B$, there exists the best approximating element  $b_0\in B_0$ such that
\begin{equation}\label{3.8}
 d_s(b, b_0)=\mathrm{dist}_{X_s}(b, B_0)\leq \mathrm{dist}^{symm}_{X_s}(B, B_0)<\delta_1.
\end{equation}
 The combination of \eqref{3.6}-\eqref{3.8}  turns out
\begin{equation*}
  \begin{split}
      &\mathrm{dist}_{X_t}\left(U_\lambda(t, s)b, U_{\lambda_0}(t, s)B_0\right)\\
      \leq \  & d_t \left(U_\lambda(t, s)b,  U_{\lambda_0}(t, s)b_0\right)\\
    \leq\ &d_t \left(U_\lambda(t, s)b,  U_{\lambda_0}(t, s)b\right)+ d_t \left(U_{\lambda_0}(t, s)b, U_{\lambda_0}(t, s)b_0\right)\\
     <\ &\e/2+\e/2=\e.
  \end{split}
\end{equation*}
By the arbitrariness of $b\in B$,
\begin{equation*}
 \mathrm{dist}_{X_t}\left(U_\lambda(t, s)B, U_{\lambda_0}(t, s)B_0\right)\leq \e.
\end{equation*}
On the other hand, by the compactness of $B$ in $X_s$ and the symmetry of $\mathrm{dist}^{symm}_{X_s}(B, B_0)<\delta_1$, we also have
\begin{equation*}
\mathrm{dist}_{X_t}\left(U_\lambda(t, s)B_0, U_{\lambda_0}(t, s)B\right)\leq \e.
\end{equation*}
Therefore, for any $(\lambda, B)\in \Lambda\times CB(B(s))$, we arrive at
\begin{equation*}
 \mathrm{dist}^{symm}_{X_t}\left(U_\lambda(t, s)B, U_{\lambda_0}(t, s)B_0\right)\leq \e\ \ \hbox{as}\ \ \eqref{c1}\ \ \hbox{holds}.
\end{equation*}
  By the  arbitrariness of $(\lambda_0, B_0)\in \Lambda\times CB(B(s))$, we obtain the desired conclusion.
\end{proof}

\begin{proof}[\textbf{Proof of Theorem \ref{32}}] It follows from condition $(L_2)$  and the invariance of pullback $\mathscr D$-attractor that
\begin{equation*}
  A_\lambda(n)=U_\lambda(n, s)A_\lambda(s)\subset U_\lambda(n, s) B(s), \ \ \forall \lambda\in \Lambda,\ n\in \mathbb{Z}, \  s\leq \min\{n, t_0\},
\end{equation*}
which implies
\begin{equation}\label{3.9}
  \mathrm{dist}_{X_n}\left(A_\lambda(n), U_\lambda(n, s) B(s)\right)=0, \ \ \forall \lambda\in \Lambda,\ n\in \mathbb{Z}, \  s\leq \min\{n, t_0\}.
\end{equation}
While condition $(L_2)$ shows that
\begin{equation}\label{3.10}
  \lim_{s\rightarrow -\infty}\mathrm{dist}_{X_n}\left(U_\lambda(n, s)B(s), A_\lambda(n) \right)=0, \ \ \forall \lambda\in \Lambda,\ n\in \mathbb{Z}.
\end{equation}
The combination of \eqref{3.9}-\eqref{3.10} yields
\begin{equation*}
  \lim_{s\rightarrow -\infty}\mathrm{dist}^{symm}_{X_n}\left(U_\lambda(n, s) B(s), A_\lambda(n)\right)=\lim_{s\rightarrow -\infty}\mathrm{dist}_{X_n}\left(U_\lambda(n, s) B(s), A_\lambda(n)\right)=0,
\end{equation*}
that is,
\begin{equation}\label{3.11}
  A_\lambda(n)=\lim_{s\rightarrow -\infty} U_\lambda(n, s) B(s), \ \ \forall \lambda\in \Lambda,\ n\in \mathbb{Z}.
\end{equation}

For every $n\in \mathbb Z$ and $s\leq \min\{n, t_0\}$,  we define the mappings $f^n_s, f^n:\Lambda\rightarrow CB(X_n)$,
\begin{equation*}
   f^n_s(\lambda)=  U_\lambda(n, s) B(s), \ \ f^n(\lambda)= A_\lambda(n),  \ \  \forall \lambda\in \Lambda.
\end{equation*}
Then formula \eqref{3.11} reads $f^n(\lambda)=\lim_{s\rightarrow -\infty}f^n_s(\lambda)$.   Lemma \ref{34} shows that $f^n_s(\lambda)$ is continuous from $\Lambda$ into $CB(X_n)$ for each $n\in \mathbb{Z}$ and $s\leq \min\{n, t_0\}$. Therefore,  by Lemma \ref{33},  there exists a residual subset $\Lambda_n\subset \Lambda$ for each $n\in \mathbb{Z}$ such that the limiting function $f^n$ is continuous on $\Lambda_n$, that is,
 \begin{equation}\label{01061}
  \lim_{\lambda\rightarrow \lambda_0}\mathrm{dist}^{symm}_{X_n}\left(f^n(\lambda), f^n(\lambda_0)\right)=0,\ \ \forall \lambda_0\in \Lambda_n, \   n\in \mathbb{Z}.
\end{equation}
Let
\begin{equation*}
  \Lambda^*=\bigcap_{n\in\mathbb{Z}} \Lambda_n.
\end{equation*}
   Obviously,  $\Lambda^*$ is a residual subset of $\Lambda$ for the countable intersection of residual subsets is also a residual subset, and by \eqref{01061}
\begin{equation}\label{3.12}
  \lim_{\lambda\rightarrow \lambda_0}\mathrm{dist}^{symm}_{X_n}\left(A_\lambda(n), A_{\lambda_0}(n)\right)=0,\ \ \forall \lambda_0\in \Lambda^*, \ \ \forall n\in \mathbb{Z}.
\end{equation}
Take $n\leq \min\{t, t_0\}$ for any given $t\in\mathbb{R}$. By the invariance of $\mathcal A_\lambda$, we have
\begin{equation}\label{3.13}
  A_\lambda(t)=U_\lambda(t, n)A_\lambda(n), \ \ \forall \lambda\in \Lambda.
\end{equation}
Taking into account  $A_{\lambda_0}(n)\in CB(B(n)), \forall\lambda_0\in \Lambda^*$, we infer from
Lemma \ref{35} that for any $\e>0$, there exists a $\delta>0$ such that
\begin{equation}\label{3.14}
 \mathrm{dist}^{symm}_{X_t}\left(U_\lambda(t, n)B, U_{\lambda_0}(t, n)A_{\lambda_0}(n)\right)<\e
\end{equation}
 whenever $(\lambda, B)\in\Lambda\times CB(B(n))$ and $\rho(\lambda, \lambda_0)+ \mathrm{dist}^{symm}_{X_n}\left(B, A_{\lambda_0}(n)\right)<\delta$.
 Formula \eqref{3.12} means that  there exists a constant  $\delta_1: 0<\delta_1<\delta/2$  such that
\begin{equation}\label{3.15}
 \mathrm{dist}^{symm}_{X_n}\left(A_\lambda(n), A_{\lambda_0}(n)\right)< \delta/2 \ \ \hbox{as}\ \  \rho(\lambda, \lambda_0)<\delta_1.
\end{equation}
The combination  of \eqref{3.13}-\eqref{3.15}  arrives at
\begin{equation*}
 \mathrm{dist}^{symm}_{X_t}\left(A_\lambda(t), A_{\lambda_0}(t)\right)= \mathrm{dist}^{symm}_{X_t}\left(U_\lambda(t, n)A_\lambda(n), U_{\lambda_0}(t, n)A_{\lambda_0}(n)\right)<\e
\end{equation*}
whenever $\lambda\in \Lambda$ and $\rho(\lambda, \lambda_0)<\delta_1$ for  $A_\lambda(n)\in CB(B(n))$.  That is,  the mapping  $\lambda \mapsto A_\lambda(t)$ is continuous on $\Lambda^*$ for all $t\in \mathbb{R}$.
\end{proof}

\subsection{The equivalence between the  continuity and the equi-attraction}
In  this  section, motivated by the idea in \cite{Hoang1,Hoang2}, we show that the continuity of pullback $\mathscr D$-attractor w.r.t. the parameter $\lambda\in \Lambda$ (not only the residual subset $\Lambda^*$)  is equivalent to the pullback equi-attraction under some assumptions. For the related equivalence criterion  on that of    global, pullback and uniform attractors, one can see \cite{Hoang1,Hoang2}, and for a comprehensive summary  on this topic, one can see   \cite{Kloeden2, Kloeden3}.

\begin{theorem}\label{36} Let the assumptions of Theorem \ref{32} be valid.
\begin{enumerate}[$(i)$]
  \item If the family $\mathcal B=\{B(t)\}_{t\in \mathbb R}$ as shown in condition $(L_2)$ satisfies the  pullback equi-attraction at time $t\in \mathbb R$, that is,
  \begin{equation}\label{3.16}
    \lim_{s\rightarrow -\infty}\sup_{\lambda\in \Lambda}\mathrm{dist}_{X_t}\left(U_\lambda(t, s)B(s), A_\lambda(t)\right)=0,
  \end{equation}
 then the mapping  $\lambda\mapsto A_\lambda(t)$ is continuous on $\Lambda$, that is,
  \begin{equation*}
    \lim_{\lambda\rightarrow \lambda_0}\mathrm{dist}^{symm}_{X_t}\left(A_\lambda(t), A_{\lambda_0}(t)\right)=0, \ \ \forall \lambda_0\in \Lambda.
  \end{equation*}
  \item   Assume that $\Lambda$ is  compact  and  there is a function $\gamma(t)$, with $\gamma(t)\leq \min\{t, t_0\}$,  such that
   \begin{equation}\label{3.161}
    \bigcup_{\lambda\in \Lambda} U_\lambda(t, s)B(s)\subset B(t),\ \  \forall s\leq \gamma(t).
   \end{equation}
  If the mapping $\lambda\mapsto A_\lambda(t)$ is continuous on $\Lambda$, then  the pullback equi-attraction \eqref{3.16} holds.
\end{enumerate}
\end{theorem}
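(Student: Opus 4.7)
The plan is to prove the two implications separately, each time exploiting the invariance relation $A_\lambda(t)=U_\lambda(t,s)A_\lambda(s)\subset U_\lambda(t,s)B(s)$ (valid for $s\leq\min\{t,t_0\}$ since $A_\lambda(s)\subset C(s)\subset B(s)$) together with the uniform-in-$x\in B(s)$ closeness of $U_\lambda$ to $U_{\lambda_0}$ supplied by $(L_3)$.

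For (i), fix $\lambda_0\in\Lambda$, $t\in\mathbb R$ and $\e>0$. By equi-attraction \eqref{3.16} I first choose $s\leq\min\{t,t_0\}$ so negative that $\sup_{\lambda\in\Lambda}\mathrm{dist}_{X_t}(U_\lambda(t,s)B(s),A_\lambda(t))<\e/3$, and then by $(L_3)$ a $\delta>0$ so that $\sup_{x\in B(s)}d_t(U_\lambda(t,s)x,U_{\lambda_0}(t,s)x)<\e/3$ whenever $\rho(\lambda,\lambda_0)<\delta$. Writing any $a\in A_\lambda(t)$ as $U_\lambda(t,s)\alpha$ with $\alpha\in A_\lambda(s)\subset B(s)$, the triangle inequality combined with these two bounds gives $\mathrm{dist}_{X_t}(a,A_{\lambda_0}(t))<2\e/3$, hence $\mathrm{dist}_{X_t}(A_\lambda(t),A_{\lambda_0}(t))<\e$. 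The reverse semidistance is obtained by the mirror-image decomposition, writing each $a\in A_{\lambda_0}(t)$ as $U_{\lambda_0}(t,s)\alpha$ with $\alpha\in A_{\lambda_0}(s)\subset B(s)$ and using the same two bounds.

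For (ii), I argue by contradiction: if equi-attraction fails at $t$, there exist $\e_0>0$, $s_n\searrow-\infty$, $\lambda_n\in\Lambda$ and $x_n\in B(s_n)$ with $y_n:=U_{\lambda_n}(t,s_n)x_n$ satisfying $\mathrm{dist}_{X_t}(y_n,A_{\lambda_n}(t))\geq\e_0$. Remark~\ref{rem1} allows taking $\mathcal B=\mathcal C$ with each section compact, and \eqref{3.161} forces $y_n\in B(t)$ once $s_n\leq\gamma(t)$; extracting along the compactness of $B(t)$ and of $\Lambda$ produces $y_n\to y_0\in B(t)$ and $\lambda_n\to\lambda_0$. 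Combined with the assumed continuity of $\lambda\mapsto A_\lambda(t)$ in $\mathrm{dist}^{symm}_{X_t}$, this upgrades to $\mathrm{dist}_{X_t}(y_0,A_{\lambda_0}(t))\geq\e_0/2$. Now pick any $\e_1\in(0,\e_0/2)$ and, using pullback attraction of $\mathcal B$ under $\mathcal A_{\lambda_0}$, choose $r\leq\gamma(t)$ so negative that $\mathrm{dist}_{X_t}(U_{\lambda_0}(t,r)B(r),A_{\lambda_0}(t))<\e_1$. For $s_n\leq\gamma(r)$, set $z_n:=U_{\lambda_n}(r,s_n)x_n\in B(r)$ by \eqref{3.161}, and a further extraction along compactness of $B(r)$ yields $z_n\to z_0\in B(r)$. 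Splitting $y_n=U_{\lambda_n}(t,r)z_n$ and bounding
\[ d_t(y_n,U_{\lambda_0}(t,r)z_0)\leq\sup_{x\in B(r)}d_t(U_{\lambda_n}(t,r)x,U_{\lambda_0}(t,r)x)+d_t(U_{\lambda_0}(t,r)z_n,U_{\lambda_0}(t,r)z_0), \]
both terms vanish as $n\to\infty$ by $(L_3)$ and by the continuity of $U_{\lambda_0}(t,r)$, respectively. Hence $y_0=U_{\lambda_0}(t,r)z_0\in U_{\lambda_0}(t,r)B(r)$ and $\mathrm{dist}_{X_t}(y_0,A_{\lambda_0}(t))<\e_1<\e_0/2$, contradicting the previous lower bound.

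The main obstacle is the second implication, where one must convert pointwise-in-$\lambda$ pullback attraction into a uniform-in-$\lambda$ one without any equicontinuity of $\lambda\mapsto A_\lambda$ in the time variable. The key device is the two-level subsequencing at times $t$ and the auxiliary time $r$, with $r$ fixed \emph{before} extracting $z_n\to z_0$; condition \eqref{3.161} is then indispensable at both scales to provide compact containers for the extractions, while $(L_3)$ transports the convergence uniformly through $U_{\lambda_n}(t,r)$.
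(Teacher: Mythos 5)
Your part (i) is correct and follows essentially the paper's route: compare $A_\lambda(t)$ and $A_{\lambda_0}(t)$ through $U_\lambda(t,s)B(s)$ at a single sufficiently early time $s$, using the invariance $A_\lambda(t)=U_\lambda(t,s)A_\lambda(s)\subset U_\lambda(t,s)B(s)$ and the equi-attraction for one leg of the triangle inequality. The only difference is that you invoke $(L_3)$ pointwise on elements $\alpha\in A_\lambda(s)\subset B(s)$, whereas the paper packages this step into Lemma \ref{34} (continuity of $\lambda\mapsto[U_\lambda(t,s)B(s)]_{X_t}$ in $CB(X_t)$); your version is marginally more elementary since it does not need the compactness of $B(s)$ for this half of the theorem.

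Part (ii) is where you genuinely diverge: the paper builds the decreasing sequence $s_{n+1}=\gamma(s_n)-1$, uses \eqref{3.161} to get monotonicity of $\mathrm{dist}^{symm}_{X_t}\left(U_\lambda(t,s_n)B(s_n),A_\lambda(t)\right)$ in $n$, and upgrades pointwise to uniform convergence on the compact $\Lambda$ via Dini's theorem (Lemma \ref{37}); you instead run a direct contradiction/subsequence argument. Your argument is sound, but two points need care. First, your initial extraction ``$y_n\to y_0\in B(t)$ by compactness of $B(t)$'' is not justified: even under hypothesis (i) of Theorem \ref{32} the sections $B(t)$ are only compact for $t\leq t_0$, and the time $t$ in \eqref{3.16} is arbitrary. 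This is repairable by reordering exactly as you do at the second level: fix $r\leq\gamma(t)$ first, extract $z_n\to z_0$ in the compact $B(r)$ (here $r\leq t_0$), and then \emph{define} $y_0:=U_{\lambda_0}(t,r)z_0=\lim y_n$ via your displayed estimate; the first extraction is then superfluous. Second, your appeal to Remark \ref{rem1} to ``take $\mathcal B=\mathcal C$'' does not work as stated: the nesting \eqref{3.161} and the conclusion \eqref{3.16} are statements about $\mathcal B$ itself, and neither transfers automatically to $\mathcal C$ (invariance gives $U_\lambda(t,s)A_\lambda(s)\subset C(t)$ but says nothing about $U_\lambda(t,s)A_\mu(s)$ for $\mu\neq\lambda$). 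What your argument actually requires is compactness of $B(r)$ for $r\leq t_0$, i.e.\ case (i) of Theorem \ref{32}; this is the same implicit requirement as in the paper's own proof, whose Lemma \ref{34} (needed for the continuity of $f^t_n$ fed into Dini) is also proved using the compactness of $B(s)$. With these repairs your compactness argument is a valid and arguably more self-contained alternative to the Dini route, at the cost of needing both a compact $\Lambda$ and compact sections $B(r)$, whereas the monotonicity structure is what lets the paper isolate Dini's theorem as the single analytic input.
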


In order to prove Theorem \ref{36}, we need the following Dini's theorem.

\begin{lemma}\label{37}  (Theorem 4.1 in \cite{Hoang1}) Let $K$ be a compact metric space,  $Y$ be a metric space, and $f_n: K\rightarrow Y$ be a continuous mapping  for each $n\in \mathbb{N}$. Assume that $f_n$ converges to a continuous function $f: K\rightarrow Y$ as $n\rightarrow \infty$ in the following monotonic way
\begin{equation*}
  d_Y\left(f_{n+1}(x), f(x)\right)\leq d_Y\left(f_n(x), f(x)\right), \ \ \forall x\in K, \ \ \forall n\geq 1.
\end{equation*}
Then $f_n$ converges to $f$ uniformly on $K$ as $n\rightarrow \infty$.
\end{lemma}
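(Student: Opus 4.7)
The plan is to adapt the classical Dini argument (monotone pointwise convergence of continuous real-valued functions on a compact set implies uniform convergence) to the present situation where the codomain is an arbitrary metric space $Y$; the role of monotonicity is played by the hypothesis that $d_Y(f_n(x),f(x))$ is nonincreasing in $n$ for each fixed $x\in K$.

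First I would fix $\e>0$ and introduce, for each $n\in\mathbb{N}$, the level set
\begin{equation*}
U_n:=\{x\in K\ |\ d_Y(f_n(x),f(x))<\e\}.
\end{equation*}
The key preliminary observation is that the real-valued function $\varphi_n(x):=d_Y(f_n(x),f(x))$ is continuous on $K$: this follows from the triangle inequality $|d_Y(f_n(x),f(x))-d_Y(f_n(x_0),f(x_0))|\leq d_Y(f_n(x),f_n(x_0))+d_Y(f(x),f(x_0))$ together with the continuity of $f_n$ and $f$. Consequently $U_n=\varphi_n^{-1}([0,\e))$ is open in $K$.

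Next I would chain three facts. (a) Monotonicity gives the nested inclusion $U_n\subset U_{n+1}$: if $\varphi_n(x)<\e$ then by hypothesis $\varphi_{n+1}(x)\leq\varphi_n(x)<\e$. (b) Pointwise convergence $f_n(x)\to f(x)$ (which I would read off from the statement that $f_n$ converges to $f$, reinforced by the monotone decay of $\varphi_n(x)$ toward its infimum; one can in fact argue that monotonicity plus pointwise convergence implies $\varphi_n(x)\downarrow 0$) yields $K=\bigcup_{n\geq1}U_n$. (c) Compactness of $K$ then extracts a finite subcover $K=U_{n_1}\cup\cdots\cup U_{n_k}$; by the nesting in (a), a single $U_N$ with $N=\max\{n_1,\dots,n_k\}$ already equals $K$. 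Thus $\sup_{x\in K}d_Y(f_n(x),f(x))\leq\e$ for every $n\geq N$, which is exactly uniform convergence.

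I do not expect any serious obstacle. The only point deserving care is the continuity of $\varphi_n$, which in the classical real-valued Dini is automatic; here it is handled cleanly by the triangle inequality above. Everything else is a routine transcription of the standard compact-open-cover proof of Dini's theorem, and the proof terminates in a few lines once the sets $U_n$ are set up.
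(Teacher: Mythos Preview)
Your argument is correct and is precisely the classical Dini-type proof: reduce to the real-valued monotone continuous functions $\varphi_n(x)=d_Y(f_n(x),f(x))$, use openness of the sublevel sets, nesting from monotonicity, covering from pointwise convergence, and extract a single index by compactness. The paper itself does not supply a proof of this lemma; it simply quotes it as Theorem~4.1 of \cite{Hoang1}, so there is nothing further to compare.
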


\begin{proof}[\textbf{Proof of Theorem \ref{36}}] (i) By condition $(L_2)$ and the invariance of pullback $\mathscr D$-attractor,
\begin{equation*}
  A_\lambda(t)=U_\lambda(t, s)A_\lambda(s)\subset U_\lambda(t, s)B(s),\ \ \forall s\leq \min\{t, t_0\},   t\in \mathbb{R}, \lambda\in \Lambda,
\end{equation*}
which implies
\begin{equation}\label{3.17}
  \sup_{\lambda\in \Lambda}\mathrm{dist}_{X_t}\left(A_\lambda(t), U_\lambda(t, s)B(s)\right)=0, \ \ \forall s\leq \min\{t, t_0\}.
\end{equation}
The combination of   \eqref{3.16} and \eqref{3.17} turns out
\begin{equation*}
    \lim_{s\rightarrow -\infty}\sup_{\lambda\in \Lambda}\mathrm{dist}^{symm}_{X_t}\left(A_\lambda(t), U_\lambda(t, s)B(s)\right) = \lim_{s\rightarrow -\infty}\sup_{\lambda\in \Lambda}\mathrm{dist}_{X_t}\left(U_\lambda(t, s)B(s), A_\lambda(t)\right)=0,
\end{equation*}
which means that for any   $\e>0$, there exists  a $s_0\leq \min\{t, t_0\}$ such that
\begin{equation}\label{3.18}
\sup_{\lambda\in \Lambda}\mathrm{dist}^{symm}_{X_t}\left(A_\lambda(t), U_\lambda(t, s_0)B(s_0)\right)<\frac{\e}{3}.
\end{equation}
  Lemma \ref{34} shows that  for any   $\lambda_0\in \Lambda$, there exists a $\delta>0$ such that
\begin{equation}\label{3.19}
\mathrm{dist}^{symm}_{X_t}\left(U_\lambda(t, s_0)B(s_0), U_{\lambda_0}(t, s_0)B(s_0)\right) <\frac{\e}{3}  \ \ \hbox{as}\ \   \rho(\lambda, \lambda_0)<\delta.
 \end{equation}
 The combination of \eqref{3.18}-\eqref{3.19} yields
 \begin{equation*}
   \begin{split}
   \mathrm{dist}^{symm}_{X_t}\left(A_\lambda(t), A_{\lambda_0}(t)\right)
   \leq &  \mathrm{dist}^{symm}_{X_t}\left(A_\lambda(t), U_\lambda(t, s_0)B(s_0)\right)\\
       & + \mathrm{dist}^{symm}_{X_t}\left(U_\lambda(t, s_0)B(s_0), U_{\lambda_0}(t, s_0)B(s_0)\right)\\
       & + \mathrm{dist}^{symm}_{X_t}\left(U_{\lambda_0}(t, s_0)B(s_0), A_{\lambda_0}(t)\right)\\
        < & \frac{\e}{3}+\frac{\e}{3}+\frac{\e}{3}=\e\ \ \hbox{as} \ \ \rho(\lambda, \lambda_0)<\delta,
   \end{split}
 \end{equation*}
that is, the mapping $\lambda \mapsto A_\lambda(t)$  is continuous at $\lambda_0$. By the arbitrariness of $\lambda_0\in \Lambda$ we obtain  that the mapping $\lambda\mapsto A_\lambda(t)$ is continuous on $\Lambda$.
 \bigskip

 (ii) If formula \eqref{3.161} holds,  then  for any given  $t\in \mathbb{R}$,   we take $s_0= \gamma(t)-1\leq \min\{t, t_0\}-1$ and
 \begin{equation*}
  s_n=\gamma(s_{n-1})-1\leq \min\{s_{n-1}, t_0\}-1, \ \ \forall n\geq 1.
 \end{equation*}
 Obviously, the sequence $\{s_n\}_{n=0}^\infty$ is strictly decreasing and $\lim_{n\rightarrow \infty}s_n= -\infty$. By formula \eqref{3.161},
 \begin{equation*}
   U_\lambda(t, s_{n+1})B(s_{n+1})=U_\lambda(t, s_n)U_\lambda(s_n, s_{n+1}) B(s_{n+1})\subset U_\lambda(t, s_n)B(s_n),
 \end{equation*}
and hence,
 \begin{equation}\label{3.20}
 \mathrm{dist}_{X_t}\left(U_\lambda (t, s_{n+1})B(s_{n+1}), A_\lambda(t)\right)\leq \mathrm{dist}_{X_t}\left(U_\lambda (t, s_n)B(s_n), A_\lambda(t)\right), \ \ \forall \lambda\in \Lambda,\ n\geq 1.
 \end{equation}
 The combination of \eqref{3.17} and \eqref{3.20} shows that
\begin{equation}\label{3.22}
 \mathrm{dist}^{symm}_{X_t}\left(U_\lambda (t, s_{n+1})B(s_{n+1}), A_\lambda(t)\right)\leq \mathrm{dist}^{symm}_{X_t}\left(U_\lambda (t, s_n)B(s_n), A_\lambda(t)\right), \ \ \forall \lambda\in \Lambda,\ n\geq 1.
 \end{equation}

 Let the mappings $f^t_n, f^t: \Lambda\rightarrow CB(X_t)$,
 \begin{equation*}
 f^t_n(\lambda)= U_\lambda(t, s_n)B(s_n), \ \ f^t(\lambda)=A_\lambda(t), \ \ \forall \lambda\in \Lambda, \ n\geq 1.
 \end{equation*}
  Lemma \ref{34} shows that  $f^t_n: \Lambda\rightarrow CB(X_t)$ is  continuous for each $n\geq 1$,  and formula \eqref{3.22} reads
 \begin{equation*}
   \mathrm{dist}^{symm}_{X_t}\left(f^t_{n+1}(\lambda), f^t(\lambda) \right)\leq \mathrm{dist}^{symm}_{X_t}\left(f^t_n(\lambda), f^t(\lambda)\right), \ \ \forall \lambda\in \Lambda, \ n\geq 1.
 \end{equation*}
 By the continuity of the mapping $f^t: \Lambda\rightarrow CB(X_t)$ and Lemma \ref{37}
 we know that $f^t_n$ converges to $f^t$ uniformly on $\Lambda$ as $n\rightarrow \infty$, that is,
 \begin{equation}\label{3.23}
   \lim_{n\rightarrow \infty}\sup_{\lambda\in \Lambda} \mathrm{dist}^{symm}_{X_t}\left(U_\lambda(t, s_n)B(s_n), A_\lambda(t)\right)=0.
 \end{equation}
 For any $s\in (s_{n+2}, s_{n+1})$, we have that $s<s_{n+1}<\gamma(s_n)\leq \min\{s_n, t_0\}$. Thus  it follows from formula \eqref{3.161} that
 \begin{equation*}
 U_\lambda(t, s)B(s)= U_\lambda(t, s_n)U_\lambda(s_n, s)B(s)\subset U_\lambda(t, s_n)B(s_n), \ \ \forall \lambda\in \Lambda,
 \end{equation*}
and hence,
 \begin{equation*}
 \begin{split}
   \mathrm{dist}_{X_t}\left(U_\lambda(t, s)B(s), A_\lambda(t)\right)&\leq \mathrm{dist}_{X_t}\left(U_\lambda(t, s_n)B(s_n), A_\lambda(t)\right)\\
   &\leq \mathrm{dist}^{symm}_{X_t}\left(U_\lambda(t, s_n)B(s_n), A_\lambda(t)\right), \ \ \forall \lambda\in \Lambda.
   \end{split}
 \end{equation*}
 Therefore, we infer from formula \eqref{3.23} that
 \begin{equation*}
   \lim_{s\rightarrow -\infty} \sup_{\lambda\in \Lambda}\mathrm{dist}_{X_t}\left(U_\lambda(t, s)B(s), A_\lambda(t)\right)\leq \lim_{n\rightarrow \infty}\sup_{\lambda\in \Lambda}\mathrm{dist}^{symm}_{X_t}\left(U_\lambda(t, s_n)B(s_n), A_\lambda(t)\right)=0.
 \end{equation*}
 This completes the proof.
\end{proof}

\section{Continuity of the pullback $\mathscr D$-exponential attractors}

The purpose of this section is to show an existence and continuity  criterion on the  pullback $\mathscr D$-exponential attractors, which can be seen as an extension of the related criterion  on the  pullback exponential attractors in fixed metric space  (cf. \cite{Y-LyPEA}).  We first give a proper notion of the pullback $\mathscr D$-exponential attractor.

\begin{definition}\label{PDEA} A family $\mathcal E=\{E(t)\}_{t\in\mathbb{R}}$ is called a pullback $\mathscr D$-exponential attractor of the process $U(t, \tau):X_\tau\rightarrow X_t$, if
\begin{enumerate}[(i)]
  \item  each section $E(t)$ is compact in $X_t$  and the fractal dimension of $E(t)$ in $X_t$ is uniformly bounded, that is,
  \begin{equation*}
    \sup_{t\in\mathbb{R}}\mathrm{dim}_f\left(E(t), X_t\right)<+\infty;
  \end{equation*}
  \item $\mathcal E$ is semi-invariant, that is, $U(t, \tau)E(\tau)\subset E(t)$ for all $t\geq \tau$;
  \item there exists a positive constant $\beta$ such that
   \begin{equation*}
     \mathrm{dist}_{X_t}\left(U(t, t-\tau)D(t-\tau), E(t)\right)\leq C(\mathcal D, t)e^{-\beta\tau}, \ \ \forall \tau\geq \tau_0(\mathcal D, t),
   \end{equation*}
   for all $t\in \mathbb R$ and $\mathcal D\in \mathscr D$, where $C(\mathcal D, t)$ and $\tau_0(\mathcal D, t)$  are  positive constants   depending only  on $\mathcal D$ and $t$.
\end{enumerate}
\end{definition}

\begin{assumption}\label{assumptionea}
Let $(\Lambda, \rho(\cdot, \cdot))$ be a complete metric space and $\{U_\lambda(t, \tau)\}_{\lambda\in\Lambda}$ be a parameterized family of processes acting on time-dependent normed linear  spaces $\{X_t\}_{t\in \mathbb R}$.  Suppose that
\begin{description}
  \item[$(H_1)$] There exist  a time-dependent family $\mathcal B=\{B(t)\}_{t\in \mathbb R}$ and positive constants $T$, $R_0$ such that
      \begin{align}
        & B(t) \ \ \hbox{is closed in}\ \ X_t \ \  \hbox{and}\ \ B(t)\subset \mathbb B_t(R_0),\ \ \forall t\in \mathbb R, \nonumber\\
         & \cup_{\lambda\in \Lambda}U_\lambda(t, t-\tau)B(t-\tau)\subset B(t), \ \ \forall \tau\geq T, t\in \mathbb R, \label{14.1}
      \end{align}
      hereafter, $\mathbb B_t(R_0)=\mathbb B_t(0; R_0)$ is the $R_0$-ball of $X_t$    centered at  $0$.
  \item[$(H_2)$] \textit{(Quasi-stability)} There exist  a Banach space $Z$ with the compact seminorm $n_Z(\cdot)$ and a $t_0\in \mathbb R$  such that
      \begin{equation}\label{14.2}
        \|U_\lambda(t, t-T)x-U_\lambda(t, t-T)y\|_{X_t}\leq \eta \|x-y\|_{X_{t-T}}+ n_Z\left(K^\lambda_t x- K^\lambda_t y\right)
      \end{equation}
      for all $x, y\in B(t- T)$, $\lambda\in \Lambda$ and $t\leq t_0$, where $\eta\in (0, 1/2)$ and the mapping $K^\lambda_t: B(t-T)\rightarrow Z$ is uniformly Lipschitz continuous, that is,
      \begin{equation}\label{14.3}
        \sup_{\lambda\in \Lambda}\|K^\lambda_t x- K^\lambda_t y\|_Z\leq L\|x-y\|_{X_{t-T}}, \ \ \forall x, y\in B(t-T), t\leq t_0,
      \end{equation}
      for some  constant $L>0$.
  \item[$(H_3)$] \textit{(Lipschitz continuity)} There exists a uniform Lipschitz constant $L_1\geq 2$ such that
  \begin{equation}\label{14.4}
  \sup_{\lambda\in \Lambda}\|U_\lambda(t, t-\tau)x-U_\lambda(t, t-\tau)y\|_{X_t}\leq L_1 \|x-y\|_{X_{t-\tau}},
  \end{equation}
  for all $x, y\in B(t-\tau)$, $\tau\in [0, T]$ and $t\in \mathbb R$.
\end{description}
\end{assumption}

For any given $\lambda_0\in \Lambda$, we define the function
\begin{equation}\label{14.5}
\Gamma(\lambda, \lambda_0):= \sup_{t\leq t_0}\sup_{s\in [0, T]} \sup_{x\in B(t-s)}\|U_\lambda(t, t-s)x- U_{\lambda_0}(t, t-s)x\|_{X_t}, \ \ \forall \lambda\in \Lambda,
\end{equation}
and the set
$$\Lambda_0:=\{\lambda\in \Lambda\ |\  0<\Gamma(\lambda, \lambda_0)<1\}.$$

\begin{theorem}\label{42} Let Assumption \ref{assumptionea} be valid.  Then  for any given  $\lambda_0\in \Lambda$, there is a  semi-invariant family $\mathcal E_\lambda=\{E_\lambda(t)\}_{t\in \mathbb R}$, with $ \lambda\in \Lambda$, possessing the following properties:
\begin{enumerate}[(i)]
  \item each section $E_\lambda(t)\subset B(t)$ is compact in $X_t$ and its fractal dimension in $X_t$  is uniformly bounded, that is,
      \begin{equation}\label{14.6}
  \sup_{\lambda\in \Lambda}\sup_{t\in \mathbb{R}} \mathrm{dim}_f\left(E_\lambda(t), X_t\right)\leq \Big[\ln\big(\frac{1}{2\eta}\big)\Big]^{-1}\ln m_Z\Big(\frac{2L}{\eta}\Big)<\infty,
\end{equation}
where $m_Z(R)$ is the maximal number of elements $z_i$ in the ball $\{z\in Z|\|z\|_Z\leq R\}$ such that $n_Z(z_i-z_j)>1$, $i \neq j$;
  \item $\mathcal E_\lambda$ pullback attracts the family $\mathcal B$ at an exponential rate, that is,
      \begin{equation}\label{14.7}
      \mathrm{dist}_{X_t}\left(U_\lambda(t, t-\tau)B(t-\tau), E_\lambda(t)\right)\leq C(t)e^{-\beta \tau}, \ \ \forall \tau\geq \tau_t, \ t\in \mathbb R,
      \end{equation}
      where  $\beta$, $C(t)$ and $\tau_t$ are  positive constants independent of  $\lambda\in \Lambda$;
  \item $\mathcal E_\lambda$ is continuous at the point $\lambda_0$ in the following sense,
  \begin{equation*}
  \mathrm{dist}_{X_t}^{symm} \left( E_\lambda(t), E_{\lambda_0}(t)\right)\leq C_1(t)\Gamma_t(\lambda, \lambda_0), \ \ \forall t\in \mathbb R, \lambda\in \Lambda_0,
  \end{equation*}
  where  $C_1(t)$ is a positive constant depending only on $t$ and
  \begin{equation*}
\Gamma_t(\lambda, \lambda_0)={ \left\{
    \begin{array}{ll}
     \big[\Gamma(\lambda, \lambda_0)\big]^\gamma , & t\leq t_0, \\
~\\
      \big[\Gamma(\lambda, \lambda_0)\big]^\gamma +\sup_{x\in B(n_* T)}\|U_\lambda(t, n_* T)x- U_{\lambda_0}(t, n_* T)x\|_{X_t} , & t>t_0,
    \end{array}
  \right.}
\end{equation*}
with $\gamma\in (0, 1)$, $n_*\in \mathbb Z$ such that $t_0- n_* T\in [0, T)$.
\end{enumerate}
\end{theorem}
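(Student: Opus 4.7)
The plan is to adapt the Efendiev--Miranville--Zelik construction of exponential attractors to the pullback, parameterized, time-dependent framework. The backbone is the quasi-stability splitting $(H_2)$: the one-step map $U_\lambda(t,t-T)\colon B(t-T)\to B(t)$ is a strict $\eta$-contraction (with $\eta<1/2$) up to a term controlled by the compact seminorm $n_Z$. Iterating $(H_2)$ produces covers of pullback images in $X_t$ that shrink geometrically at rate $2\eta$, while each refinement inflates the cardinality by at most $N:=m_Z(2L/\eta)$, via the standard packing-to-covering argument applied to the $Z$-image of each ball together with the Lipschitz bound \eqref{14.3}. Concretely, fix $t\leq t_0$; starting from $\{0\}$ as an $R_0$-cover of $B(t-nT)$, I build inductively nested finite nets $\mathcal N^{(k)}_n\subset B(t-(n-k)T)$ of cardinality $\leq N^k$ and $X_{t-(n-k)T}$-covering radius $\leq (2\eta)^k R_0$, by pushing each ball forward by $U_\lambda$ and refining via a maximal $1$-packing of the scaled $n_Z$-image. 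The set $\mathcal F_\lambda(t):=[\bigcup_{n\geq 0}\mathcal N^{(n)}_n]_{X_t}\subset B(t)$ is then compact, with fractal dimension at most $\ln N/\ln(1/(2\eta))$ uniformly in $\lambda$ and $t\leq t_0$, matching the bound in \eqref{14.6}.

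To enforce semi-invariance, set $E_\lambda(t):=[\bigcup_{s\in[0,T]}U_\lambda(t,t-s)\mathcal F_\lambda(t-s)]_{X_t}$ for $t\leq t_0$; by $(H_3)$ this preserves compactness and only inflates the dimension bound by a bounded factor. For $t>t_0$ I transport the construction forward by $E_\lambda(t):=[U_\lambda(t,n_* T)E_\lambda(n_* T)]_{X_t}$, again using $(H_3)$. The exponential attraction \eqref{14.7} then follows directly from iterating $(H_2)$ together with the absorption property \eqref{14.1}: once $\tau=nT+s$ with $s\in[0,T]$ and $n$ is large enough that repeated application stays inside the family $\{B(\cdot)\}$, the pullback image $U_\lambda(t-s,t-\tau)B(t-\tau)$ lies within $(2\eta)^n R_0$ of $\mathcal F_\lambda(t-s)$ in $X_{t-s}$, and an application of $U_\lambda(t,t-s)$ yields the bound with $\beta=\ln(1/(2\eta))/T$ and a constant $C(t)$ independent of $\lambda$.

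The main obstacle is the H\"older continuity at $\lambda_0$. I couple the constructions of $E_\lambda(t)$ and $E_{\lambda_0}(t)$ for $t\leq t_0$ by building a \emph{single} combinatorial tree: the level-$k$ ball centers and the refining $m_Z$-packings are first determined intrinsically from the $\lambda_0$-dynamics, and the $\lambda$-attractor is then obtained by replacing $U_{\lambda_0}$ with $U_\lambda$ along the same tree. Under this coupling, if $e_k$ denotes the $X$-distance between corresponding level-$k$ centers, then $(H_3)$ and the definition of $\Gamma(\lambda,\lambda_0)$ yield the recursion $e_k\leq L_1 e_{k-1}+\Gamma(\lambda,\lambda_0)$, hence $e_k\leq 2L_1^k\,\Gamma(\lambda,\lambda_0)$. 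Truncating the construction after $k$ levels contributes a comparison error of order $L_1^k\Gamma$ and a tail mismatch of order $(2\eta)^k R_0$; balancing the two by choosing $k\sim\ln(1/\Gamma)/\ln(L_1/(2\eta))$ gives $\mathrm{dist}^{symm}_{X_t}(E_\lambda(t),E_{\lambda_0}(t))\leq C_1(t)\,\Gamma(\lambda,\lambda_0)^\gamma$ with $\gamma=\ln(1/(2\eta))/[\ln L_1+\ln(1/(2\eta))]\in(0,1)$, valid on $\Lambda_0=\{\Gamma<1\}$. For $t>t_0$ I split the symmetric distance between $U_\lambda(t,n_* T)E_\lambda(n_* T)$ and $U_{\lambda_0}(t,n_* T)E_{\lambda_0}(n_* T)$ into an $(H_3)$-controlled term (contributing $L_1\,\Gamma^\gamma$) and the direct mismatch $\sup_{x\in B(n_* T)}\|U_\lambda(t,n_* T)x-U_{\lambda_0}(t,n_* T)x\|_{X_t}$, which is precisely the second summand of $\Gamma_t(\lambda,\lambda_0)$. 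The delicate step is producing the coupled trees so that the recursion $e_k\leq L_1 e_{k-1}+\Gamma$ holds for \emph{every} matched pair of centers: this forces the packings and preimage selections to be frozen from the $\lambda_0$-construction, which is legitimate precisely because $(H_2)$ and $(H_3)$ are assumed to hold uniformly in $\lambda$.
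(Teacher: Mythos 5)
Your discrete-time skeleton is essentially the paper's: the quasi-stability iteration producing nested nets of cardinality $\le m_Z(2L/\eta)^k$ and radius $(2\eta)^kR_0$, the resulting dimension bound, the exponential attraction with $\beta=\ln(1/(2\eta))/T$, and, for the H\"older continuity, the coupling of the two constructions by freezing the $\lambda_0$-tree (the paper realizes this by taking preimages $\tilde E_k(n-k)\subset B(n-k)$ of the $\lambda_0$-nets and pushing them forward with $U_\lambda$), together with the telescoping recursion $e_k\le L_1e_{k-1}+\Gamma$ and the balancing of $L_1^k\Gamma$ against $(2\eta)^k$, which yields exactly the paper's exponent $\gamma=\ln(1/(2\eta))/[\ln L_1+\ln(1/(2\eta))]$.

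There is, however, a genuine gap in your passage from discrete to continuous time. You set $E_\lambda(t):=\bigl[\bigcup_{s\in[0,T]}U_\lambda(t,t-s)\mathcal F_\lambda(t-s)\bigr]_{X_t}$ and claim that $(H_3)$ preserves compactness and only inflates the dimension by a bounded factor. But $(H_3)$ is Lipschitz continuity in the \emph{initial datum}, uniformly in $s\in[0,T]$; Assumption 3.2 contains no regularity of $s\mapsto U_\lambda(t,t-s)x$ whatsoever (and in the time-dependent setting the sets $\mathcal F_\lambda(t-s)$ live in different spaces $X_{t-s}$ as $s$ varies). A union over a continuum of compact finite-dimensional sets is in general neither compact nor finite-dimensional without H\"older continuity in time, which is precisely the extra hypothesis the classical Efendiev--Miranville--Zelik interpolation step requires and which is not available here; moreover, even granting it, you would not obtain the exact bound \eqref{14.6} claimed in the statement. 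The paper sidesteps this entirely by anchoring at a single discrete time, $E_\lambda(t)=U_\lambda(t,n_t)E_\lambda(n_t)$ with $n_t=[t]$ for $t\le t_0$ and $n_t\equiv n_*$ for $t>t_0$: this is a Lipschitz image of one compact finite-dimensional set, so compactness, the undiluted dimension bound, and semi-invariance (via the discrete semi-invariance $U_\lambda(n_r,n_t)E_\lambda(n_t)\subset E_\lambda(n_r)$) all follow. You should replace your interpolated definition by this one. A secondary point: you do not say what $E_\lambda$ is beyond the balancing level $k_\Gamma$; if the whole tree stays frozen from $\lambda_0$, the pushed-forward centers no longer cover $U_\lambda(n,n-k)B(n-k)$ at radius $(2\eta)^k$ once $L_1^k\Gamma>(2\eta)^k$, and exponential attraction of $E_\lambda$ is lost. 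The paper glues the coupled nets for $k\le k_\Gamma$ to the intrinsic $\lambda$-nets $V_k^\lambda$ for $k>k_\Gamma$; your argument needs the same device, plus an explicit two-sided estimate to get the symmetric Hausdorff distance.
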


However, we can not ensure that the family $\mathcal E_\lambda=\{E_\lambda(t)\}_{t\in \mathbb R}$ as shown in Theorem \ref{42} is the pullback $\mathscr D$-exponential attractor of the process $U_\lambda(t, \tau)$ though it possesses the semi-invariance, the compactness and the boundedness of the fractal dimension because it only pullback attracts the  family $\mathcal B$  (rather than every family $\mathcal D\in \mathscr D$) at an exponential rate.
In order to guarantee that  the family $\mathcal E_\lambda$ is exactly  the desired  pullback $\mathscr D$-exponential attractor, we give the following    two  corollaries with additional assumptions.  We first introduce the definition of pullback $\mathscr D$-absorbing family which will be used here.

\begin{definition}\label{Absorbing}\textit{(Pullback $\mathscr D$-absorbing family)} A family  of nonempty sets $\mathcal B=\{B(t)\}_{t\in\mathbb R}$ is called a pullback $\mathscr D$-absorbing family of the process $U(t, \tau): X_\tau \rightarrow X_t$, if for any $t\in \mathbb R$ and any $\mathcal D\in \mathscr D$, there exists a $\tau_0(t, \mathcal D)\leq t$ such that
\begin{equation*}
U(t, \tau)D(\tau)\subset B(t)\ \ \hbox{for}\ \ \tau\leq  \tau_0(t, \mathcal D).
\end{equation*}
In particular, if for any $\mathcal D\in \mathscr D$, there exists a constant $e(\mathcal D)>0$ such that
\begin{equation*}
U(t, \tau)D(\tau)\subset B(t),\ \ \forall    \tau\leq  t-e(\mathcal D), \ t\in \mathbb R,
\end{equation*}
then $\mathcal B$ is called a  uniformly pullback $\mathscr D$-absorbing family.
\end{definition}

\begin{corollary}\label{ea1} Let   Assumption \ref{assumptionea} be valid, and   the family $\mathcal B=\{B(t)\}_{t\in\mathbb{R}}$ as shown in $(H_1)$  is a uniformly pullback $\mathscr D$-absorbing family of the process  $U_\lambda(t,\tau)$.     Then the family $\mathcal E_\lambda=\{E_\lambda(t)\}_{t\in\mathbb{R}}$ given by  Theorem \ref{42} is a pullback $\mathscr D$-exponential attractor of the process  $U_\lambda(t,\tau)$.
 \end{corollary}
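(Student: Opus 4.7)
My plan is to verify the three defining properties of a pullback $\mathscr D$-exponential attractor (Definition \ref{PDEA}) for the family $\mathcal E_\lambda$ supplied by Theorem \ref{42}, using that two of them come for free from the theorem and reducing the third to the already-established attraction of $\mathcal B$ via the uniform absorbing hypothesis.

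First I observe that property (i) of Definition \ref{PDEA} (compactness of each section together with a uniform bound on fractal dimension) and property (ii) (semi-invariance) are immediate from the corresponding conclusions of Theorem \ref{42}; no further work is needed for these items. The only nontrivial point is property (iii), namely exponential pullback attraction of an arbitrary $\mathcal D \in \mathscr D$, because Theorem \ref{42}(ii) provides exponential attraction only for the distinguished family $\mathcal B$.

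To establish (iii), I fix $\mathcal D \in \mathscr D$ and $t \in \mathbb R$. The uniform pullback $\mathscr D$-absorbing hypothesis on $\mathcal B$ furnishes a constant $e = e(\mathcal D) > 0$, independent of both $t$ and $\lambda \in \Lambda$, such that $U_\lambda(s, \sigma) D(\sigma) \subset B(s)$ whenever $\sigma \leq s - e$. For $\tau \geq e$ I set $s = t - (\tau - e)$, so that $t - \tau = s - e$ and hence $\sigma = t - \tau$ satisfies $\sigma \leq s - e$ with equality. The two-parameter semigroup identity then yields
\begin{equation*}
U_\lambda(t, t-\tau) D(t-\tau) = U_\lambda\bigl(t,\, t-(\tau-e)\bigr)\, U_\lambda\bigl(t-(\tau-e),\, t-\tau\bigr) D(t-\tau) \subset U_\lambda\bigl(t,\, t-(\tau-e)\bigr)\, B\bigl(t-(\tau-e)\bigr).
\end{equation*}
Applying Theorem \ref{42}(ii) to the right-hand side with $\tau - e$ in place of $\tau$, together with the monotonicity of $\mathrm{dist}_{X_t}(\cdot,\, E_\lambda(t))$ in its first argument, gives
\begin{equation*}
\mathrm{dist}_{X_t}\bigl(U_\lambda(t, t-\tau) D(t-\tau),\, E_\lambda(t)\bigr) \leq C(t)\, e^{-\beta(\tau - e)} = \bigl(C(t)\, e^{\beta e(\mathcal D)}\bigr)\, e^{-\beta \tau}
\end{equation*}
whenever $\tau - e \geq \tau_t$. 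Setting $C(\mathcal D, t) := C(t)\, e^{\beta e(\mathcal D)}$ and $\tau_0(\mathcal D, t) := e(\mathcal D) + \tau_t$ recovers exactly the decay demanded by Definition \ref{PDEA}(iii).

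I do not anticipate any real obstacle: the constants $C(t)$, $\beta$ and $\tau_t$ delivered by Theorem \ref{42}(ii) are already independent of $\lambda$, and the absorbing time $e(\mathcal D)$ is independent of both $\lambda$ and $t$ by assumption, so no uniformity is lost when the two estimates are composed. Conceptually, the uniform absorbing property funnels any orbit issued from $\mathcal D$ into $\mathcal B$ within a bounded time lag $e(\mathcal D)$, after which the exponential attraction of $\mathcal B$ by $\mathcal E_\lambda$ finishes the job.
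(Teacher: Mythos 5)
Your proposal is correct and follows essentially the same route as the paper's proof: reduce everything to the exponential attraction property, use the uniform absorbing time $e(\mathcal D)$ to insert $\mathcal D$ into $\mathcal B$ after a fixed lag, and then invoke the exponential attraction of $\mathcal B$ from Theorem \ref{42}(ii), absorbing the factor $e^{\beta e(\mathcal D)}$ into the constant. No gaps.
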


\begin{proof}
 By  Theorem \ref{42}, it is enough to prove that  $\mathcal{E}_\lambda$ pullback attracts every family $\mathcal{D}\subset \mathscr D$ at an exponential rate.
For any  $\mathcal D\in \mathscr D$, there exists a constant  $e(\mathcal D)>0$ such that
\begin{equation*}
  U(t, t-\tau)D(t-\tau)\subset  B(t), \ \ \forall \tau\geq e(\mathcal D), \forall t\in \mathbb R.
\end{equation*}
Then it follows from \eqref{14.7} that
\begin{equation*}
  \begin{split}
    &\mathrm{dist}_{X_t}\left(U_\lambda(t,t-\tau)D(t-\tau), E_\lambda(t)\right)\\
  \leq\ & \mathrm{dist}_{X_t}\left(U_\lambda(t, t-\tau+e(\mathcal D))U_\lambda(t-\tau+e(\mathcal D), t-\tau)D(t-\tau), E_\lambda(t)\right) \\
       \leq\ & \mathrm{dist}_{X_t}\left(U_\lambda(t, t-\tau+e(\mathcal D))B(t-\tau+e(\mathcal D)), E_\lambda(t)\right)\\
     \leq\ & C(t)e^{\beta e(\mathcal D)}e^{-\beta\tau}, \ \ \forall t\in\mathbb{R}, \ \tau\geq e(\mathcal D)+\tau_t.
  \end{split}
\end{equation*}
This completes the proof.
\end{proof}

\begin{corollary}\label{ea2}   Let   Assumption \ref{assumptionea} be valid,  and the process $U_\lambda(t, \tau)$ also possess  a uniformly pullback $\mathscr D$-absorbing family $\mathcal D_0=\{D_0(t)\}_{t\in\mathbb{R}}\in \mathscr D$ satisfying the following conditions:
\begin{enumerate}[(i)]
  \item there are positive constants $\kappa$, $\tau_1$ and $C_0$ such that
   \begin{equation}\label{14.8}
   \mathrm{dist}_{X_t}\left(U_\lambda(t, t-\tau)D_0(t-\tau), B(t)\right)\leq C_0 e^{-\kappa \tau}, \ \ \forall t\in \mathbb R, \ \tau\geq \tau_1;
   \end{equation}
  \item there is a positive constant $\mathcal R >R_0$ such that $D_0(t)\subset \mathbb B_t(\mathcal R)$ for all $t\in \mathbb R$ and
      \begin{equation}\label{14.9'}
      \|U_\lambda(t, t-\tau)x- U_\lambda(t, t-\tau)y\|_{X_t}\leq C_1 e^{\gamma \tau}\|x-y\|_{X_{t-\tau}},
      \end{equation}
       for all $x, y\in \mathbb B_{t-\tau}(\mathcal R)$, $\tau\geq 0$ and $t\in \mathbb R$, where $C_1$ and $\gamma$ are positive constants depending only on $\mathcal R$.
     \end{enumerate}
  Then   the family $\mathcal E_\lambda=\{E_\lambda(t)\}_{t\in\mathbb{R}}$ given by  Theorem \ref{42} is a pullback $\mathscr D$-exponential attractor of the process  $U_\lambda(t,\tau)$.
 \end{corollary}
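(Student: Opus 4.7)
The plan is to show that $\mathcal E_\lambda$ pullback attracts every family $\mathcal D \in \mathscr D$ at an exponential rate; since Theorem~\ref{42} already supplies the semi-invariance, the compactness of each section, and the uniform bound on the fractal dimension, this is the only remaining property in Definition~\ref{PDEA}. The overall strategy is a three-step absorption chain $\mathcal D \leadsto \mathcal D_0 \leadsto$ (close to) $\mathcal B \leadsto \mathcal E_\lambda$: the uniformly pullback $\mathscr D$-absorbing property of $\mathcal D_0$ moves trajectories into $\mathcal D_0$ in finite time, hypothesis (i) then pushes $\mathcal D_0$ into an exponentially thin neighborhood of $\mathcal B$, hypothesis (ii) transports this nearness through the remaining evolution at a controlled exponential cost, and Theorem~\ref{42}(ii) supplies the final exponential attraction of $\mathcal B$ by $\mathcal E_\lambda$.

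More precisely, fix $\mathcal D \in \mathscr D$ and let $e(\mathcal D), e(\mathcal D_0) \geq 0$ denote the respective uniform absorption times. For $t \in \mathbb R$ fixed and $\tau$ sufficiently large, I would decompose $\tau = s_1 + s_2 + s_3$ with $s_1 = e(\mathcal D)$, $s_3 = a(\tau - s_1)$, and $s_2 = (1-a)(\tau - s_1)$, for a parameter $a \in (0,1)$ to be chosen. Given $x \in D(t-\tau)$, set $y_1 = U_\lambda(t-s_2-s_3, t-\tau) x \in D_0(t-s_2-s_3)$ (legitimate because $s_1 \geq e(\mathcal D)$), and $y_2 = U_\lambda(t-s_3, t-s_2-s_3) y_1 \in D_0(t-s_3) \subset \mathbb B_{t-s_3}(\mathcal R)$ as soon as $s_2 \geq e(\mathcal D_0)$. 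Hypothesis (i) and the definition of the Hausdorff semidistance then yield, for any $\varepsilon > 0$, an element $b \in B(t-s_3) \subset \mathbb B_{t-s_3}(R_0) \subset \mathbb B_{t-s_3}(\mathcal R)$ (using $\mathcal R > R_0$) with
\begin{equation*}
\|y_2 - b\|_{X_{t-s_3}} \leq C_0 e^{-\kappa s_2} + \varepsilon,
\end{equation*}
whenever $s_2 \geq \tau_1$. Applying (ii) on the ball $\mathbb B_{t-s_3}(\mathcal R)$ and using $U_\lambda(t,t-\tau)x = U_\lambda(t,t-s_3) y_2$ gives
\begin{equation*}
\|U_\lambda(t, t-\tau) x - U_\lambda(t, t-s_3) b\|_{X_t} \leq C_1 e^{\gamma s_3}\,(C_0 e^{-\kappa s_2} + \varepsilon),
\end{equation*}
while Theorem~\ref{42}(ii) produces $p \in E_\lambda(t)$ with $\|U_\lambda(t, t-s_3) b - p\|_{X_t} \leq C(t) e^{-\beta s_3} + \varepsilon$ as soon as $s_3 \geq \tau_t$.

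Adding the two errors, letting $\varepsilon \to 0$, and taking the supremum over $x \in D(t-\tau)$ yields
\begin{equation*}
\mathrm{dist}_{X_t}\!\left(U_\lambda(t, t-\tau) D(t-\tau),\, E_\lambda(t)\right) \leq C_0 C_1 e^{\gamma s_3 - \kappa s_2} + C(t)\, e^{-\beta s_3}.
\end{equation*}
With the chosen splitting, the first exponent equals $(a(\gamma+\kappa) - \kappa)(\tau - e(\mathcal D))$, which is strictly negative provided $a < \kappa/(\gamma+\kappa)$, and the second equals $-\beta a(\tau - e(\mathcal D))$. Fixing any $a \in (0, \kappa/(\gamma+\kappa))$ and taking $\tau$ large enough that $s_2 \geq \max\{\tau_1, e(\mathcal D_0)\}$ and $s_3 \geq \tau_t$ then produces the required exponential decay $e^{-\beta' \tau}$ for some $\beta' > 0$ with prefactor depending only on $\mathcal D$ and $t$, establishing property (iii) of Definition~\ref{PDEA}. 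The main obstacle is precisely that the uniformly $\mathscr D$-absorbing family $\mathcal D_0$ is not contained in the quasi-stable family $\mathcal B$, so Theorem~\ref{42} cannot be invoked directly on $\mathcal D_0$; the exponentially small nearness in (i) must be carried through the evolution at an exponentially growing Lipschitz cost from (ii), and the constraint $a < \kappa/(\gamma+\kappa)$ is exactly what guarantees that this transport beats the loss.
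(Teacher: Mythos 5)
Your proposal is correct and follows essentially the same route as the paper: the key step in both is to split the pullback time window, balancing the exponential Lipschitz growth $e^{\gamma s_3}$ from hypothesis (ii) against the exponential approach $e^{-\kappa s_2}$ to $\mathcal B$ from hypothesis (i), with the same admissibility condition on the splitting fraction (the paper fixes $\theta=\kappa/(2(\gamma+\kappa))$, which lies in your range $a<\kappa/(\gamma+\kappa)$). The only difference is organizational — the paper first proves exponential attraction of $\mathcal D_0$ by $\mathcal E_\lambda$ and then bootstraps to general $\mathcal D\in\mathscr D$ by repeating the argument of Corollary \ref{ea1} with $B(t)$ replaced by $D_0(t)$, whereas you fold the initial absorption leg $\mathcal D\leadsto\mathcal D_0$ into a single three-leg chain.
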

\begin{proof}
For any given $t\in \mathbb R$, we take
\begin{equation*}
  \theta=\frac{\kappa}{2(\gamma+\kappa)}\ \ \hbox{and}\ \ \beta'=\min\left\{\frac{\kappa}{2}, \frac{\kappa \beta}{2(\gamma+\kappa)}\right\}.
\end{equation*}
A simple calculation shows that
\begin{equation*}
  \theta\in (0, 1), \ \ \beta'>0, \ \ \gamma\theta+\kappa\theta-\kappa=-\frac{\kappa}{2}.
\end{equation*}
Due to  $\mathcal D_0\in \mathscr D$, by Definition \ref{Absorbing}, there exists a positive constant $e_1>\tau_1$ such that
\begin{equation*}
U_\lambda(t-\tau\theta, t-\tau)D_0(t-\tau)\subset D_0(t-\tau\theta)\subset \mathbb B_{t-\tau\theta}(\mathcal R)
\end{equation*}
for all $\tau\geq (1-\theta)^{-1}e_1$ and $t\in \mathbb R$.  Thus  it follows from formulas \eqref{14.7}-\eqref{14.9'} and the fact (see $(H_1)$)
\begin{equation*}
B(t-\tau\theta)\subset \mathbb B_{t-\tau\theta}(R_0)\subset \mathbb B_{t-\tau\theta}(\mathcal R)
\end{equation*}
that
\begin{equation}\label{14.10}
\begin{split}
& \mathrm{dist}_{X_t}\left(U_\lambda(t, t-\tau)D_0(t-\tau), E_\lambda(t)\right)\\
\leq\ & \mathrm{dist}_{X_t}\left(U_\lambda(t, t-\tau\theta)U_\lambda(t-\tau\theta, t-\tau)D_0(t-\tau),U_\lambda(t, t-\tau\theta)B(t-\tau\theta)\right)\\
& +  \mathrm{dist}_{X_t}\left(U_\lambda(t, t-\tau\theta)B(t-\tau\theta), E_\lambda(t)\right)\\
\leq\ & C_1 e^{\gamma \tau \theta} \mathrm{dist}_{X_{t-\tau\theta}}\left(U_\lambda(t-\tau\theta, t-\tau)D_0(t-\tau), B(t-\tau\theta)\right)+ C(t)e^{-\beta \tau\theta}\\
\leq \ & C_1C_0  e^{\left(\gamma \theta+\kappa\theta-\kappa \right)\tau}+ C(t)e^{-\beta \tau \theta}\\
 \leq \ & C(t)e^{-\beta' \tau}, \ \ \forall  \tau\geq \max\{ \theta^{-1}\tau_t, (1-\theta)^{-1} e_1\}, \ t\in \mathbb R.
\end{split}
\end{equation}
That is, $\mathcal{E}_\lambda$ pullback attracts $\mathcal{D}_0$ at an exponential rate.
Then  repeating the same proof  as   Corollary \ref{ea1} (replacing $B(t)$ there by $D_0(t)$), we obtain that $\mathcal E_\lambda$ is a pullback $\mathscr D$-exponential attractor of the process $U_\lambda(t, \tau)$.
\end{proof}

 \begin{proof}[\textbf{Proof of Theorem \ref{42}}] For clarity, we divide the proof into four steps.

 \textbf{Step 1.} (Construction of the family $\{E_\lambda(n)\}_{n\leq n_*}$, with $\lambda\in \Lambda\backslash \Lambda_0$)  For simplicity and  without loss of generality, we take $T=1$.  It follows from Assumption \ref{assumptionea}  that  for each $\lambda\in \Lambda$, the discrete process $U_\lambda(m, n): X_n\rightarrow X_m$   possesses the following properties:
\begin{align}
&\bigcup_{\lambda\in \Lambda}U_\lambda(m, n)B(n)\subset B(m), \ \ \forall m\geq n\in \mathbb Z,\label{14.11}\\
& \sup_{\lambda\in \Lambda}\|U_\lambda(n, n-1)x- U_\lambda(n, n-1)y\|_{X_n}\leq L_1\|x-y\|_{X_{n-1}},\ \ \forall x, y\in B(n-1), \ n\in \mathbb Z, \label{14.12}
\end{align}
and
\begin{align}
&\|U_\lambda(n, n-1)x- U_\lambda(n, n-1)y\|_{X_n}\leq \eta\|x-y\|_{X_{n-1}}+n_Z\left(K^\lambda_n x-K^\lambda_ny\right),\label{14.13}\\
&  \sup_{\lambda\in \Lambda}\|K^\lambda_n x-K^\lambda_ny\|_Z\leq L\|x-y\|_{X_{n-1}}, \ \   \forall  x, y\in B(n-1), n\leq n_*,  \label{14.14}
\end{align}
 where $n_*\in \mathbb Z$ satisfies $t_0-n_* \in [0, 1)$.

Repeating  the similar arguments as   Theorem 2.5 in \cite{Ly-YJDDE} (or Theorem 2.3 in \cite{Y-LyPEA}), one easily  infers from formulas \eqref{14.11}-\eqref{14.14} that
\begin{equation}\label{14.15}
N_n(k):= N_n\left(U_\lambda(n, n-k)B(n-k), (2\eta)^k R_0\right)\leq \left[m_Z\left(\frac{2L}{\eta}\right)\right]^k, \ \ \forall  n\leq n_*, k\geq 1, \lambda\in \Lambda,
\end{equation}
 where $N_n(B, \e)$ denotes the cardinality of minimal covering of the set $B\subset X_n$ by the closed subsets of $X_n$ with diameter $\leq 2\e$.

It follows from \eqref{14.15} and \eqref{14.11} that there exists  a finite subset $V^\lambda_k(n)$ of $X_n$   for each $\lambda\in \Lambda$, $n\leq n_*$ and   $k\geq 1$, which possesses the following properties:
\begin{align}
& \mathrm{Card}\left(V^\lambda_k(n)\right)\leq \left[m_Z\left(\frac{2L}{\eta}\right)\right]^k,\label{14.18}\\
& V^\lambda_k(n)\subset U_\lambda(n, n-k)B(n-k)\subset B(n)\subset X_n,\label{14.16}\\
& U_\lambda(n, n-k)B(n-k)\subset \bigcup_{h\in V^\lambda_k(n)} \mathbb B_n\left(h; 2R_0(2\eta)^k\right). \label{14.17}
\end{align}
Given $\lambda_0\in \Lambda$,  for  every $\lambda\in \Lambda\backslash \Lambda_0$ and  $n\leq n_*$,   we define by induction the sets
\begin{equation}\label{14.19}
\left\{
  \begin{array}{ll}
    E^\lambda_1(n)=V^\lambda_1(n),  \\
~\\
    E^\lambda_k(n)= V^\lambda_k(n)\cup U_\lambda(n, n-1)E^\lambda_{k-1}(n-1),\ \ k\geq 2,  \\
~\\
    E_\lambda(n)=\left[\bigcup_{k\geq 1}E^\lambda_k(n)\right]_{X_n}.
  \end{array}
\right.
\end{equation}
It follows from \eqref{14.11} and \eqref{14.16}-\eqref{14.17} that
\begin{align}
& E^\lambda_k(n)=\bigcup_{l=0}^{k-1}U_\lambda(n, n-l)V^\lambda_{k-l}(n-l)\subset U_\lambda(n, n-k)B(n-k),\label{14.20}\\
& U_\lambda(n+1, n)E^\lambda_k(n)\subset E^\lambda_{k+1}(n+1),\label{14.21}\\
& E_\lambda(n)\subset U_\lambda(n, n-1)B(n-1)\subset B(n),     \label{14.22}
\end{align}
for all $\lambda\in \Lambda\backslash \Lambda_0$, $n\leq n_*$, $k\geq 1$. Moreover, we infer from \eqref{14.18} and \eqref{14.20} that
\begin{equation}\label{14.25}
  \mathrm{Card}\left(E^\lambda_k(n)\right)\leq \sum_{l=0}^{k-1} \mathrm{Card}\left(V^\lambda_{k-l}(n-l)\right)\leq \left[m_Z\left(\frac{2L}{\eta}\right)\right]^{k+1},  \ \ \forall  \lambda\in \Lambda\backslash \Lambda_0, n\leq n_*, k\geq 1.
\end{equation}
 \medskip

\textbf{Step 2.} (The properties of the family $\{E_\lambda(n)\}_{n\leq n_*}$, with $\lambda\in \Lambda\backslash \Lambda_0$) We show that, for every $\lambda\in \Lambda\backslash \Lambda_0$, the family $\{E_\lambda(n)\}_{n\leq n_*}$ is of the following properties:

(i) (Semi-invariance)  It follows from formulas  \eqref{14.12},   \eqref{14.19} and   \eqref{14.21} that
\begin{equation}\label{14.26}
U_\lambda(n, l)E_\lambda(l)\subset \Big[\bigcup_{k\geq 1}U_\lambda(n, l)E^\lambda_k(l)\Big]_{X_n}\subset\Big[\bigcup_{k\geq 1}E^{\lambda}_{k+n-l}(n)\Big]_{X_n}\subset E_\lambda(n), \ \ \forall l \leq n\leq n_*.
\end{equation}

(ii) (Pullback exponential attractiveness)  Formula \eqref{14.19} shows  $V^\lambda_k(n)\subset E_\lambda(n)$  for all $n\leq n_*$ and $k\geq 1$. Thus  by \eqref{14.17}, we have
\begin{equation}\label{14.27}
\begin{split}
& \mathrm{dist}_{X_n}\left(U_\lambda(n, n-k)B(n-k), E_\lambda(n)\right)\\
\leq \ & \mathrm{dist}_{X_n}\left(U_\lambda(n, n-k)B(n-k), V^\lambda_k(n)\right)\\
\leq\ & 2(2\eta)^k R_0, \ \ \forall k\geq 1, \ n\leq n_*.
\end{split}
\end{equation}

(iii) (The compactness and the boundedness of the fractal dimension)  For any $\e\in(0, 1)$, there exists  unique $k_\e\in \mathbb N$ such that
\begin{equation}\label{14.28}
2(2\eta)^{k_\e} R_0<\e\leq 2(2\eta)^{k_\e-1} R_0.
\end{equation}
Obviously, $\lim_{\e\rightarrow 0^+}k_\e= +\infty$. It follows from  \eqref{14.11} and \eqref{14.20} that
\begin{equation*}
E^\lambda_k(n)\subset U_\lambda(n, n-k)B(n-k)\subset U_\lambda(n, n-k_\e)B(n-k_\e), \ \ \forall k\geq k_\e, \ n\leq n_*,
\end{equation*}
which implies
\begin{equation*}
E_\lambda(n)\subset \Big(\bigcup_{k<k_\e} E^\lambda_k(n)\Big) \bigcup\Big[U_\lambda(n, n-k_\e)B(n-k_\e) \Big]_{X_n}, \ \ \forall n\leq n_*.
\end{equation*}
Thus by \eqref{14.15}, \eqref{14.25} and \eqref{14.28}, we obtain
\begin{equation}\label{14.29}
\begin{split}
N_n\left(E_\lambda(n), \e\right)&\leq N_n\left(E_\lambda(n), 2(2\eta)^{k_\e} R_0\right)\\
 &\leq \sum_{k<k_\e} \mathrm{Card}\left(E^\lambda_k(n)\right)+ N_n\left(U_\lambda(n, n-k_\e)B(n-k_\e), 2(2\eta)^{k_\e} R_0\right)\\
&\leq \sum_{k<k_\e} \left[m_Z\left(\frac{2L}{\eta}\right)\right]^{k+1}+ N_n(k_\e)\leq 2\left[m_Z\left(\frac{2L}{\eta}\right)\right]^{k_\e+1}<+\infty.
\end{split}
\end{equation}
  Formula  \eqref{14.29} shows that $E_\lambda(n)$ is a compact subset of $X_n$ for the arbitrariness of $\e\in(0, 1)$. Moreover, by virtue of estimates \eqref{14.28}-\eqref{14.29} and a simple calculation, we have
  \begin{equation*}
  \frac{\ln N_n\left(E_\lambda(n),\epsilon\right)}{\ln{(1/\e)}}\leq \frac{(k_\e+1)\ln\left[m_Z\left(\frac{2L}{\eta}\right)\right]+\ln 2 }{(k_\e-1)\ln(1/ 2\eta)-\ln(2R_0)}, \ \  \forall \e\in(0, 1),
  \end{equation*}
  which implies
\begin{equation}\label{14.30}
\mathrm{dim}_f\left(E_\lambda(n); X_n \right)=\limsup_{\e\rightarrow 0^+}\frac{\ln N_n\left(E_\lambda(n),\epsilon\right)}{\ln{(1/\e)}}
 \leq  \Big[\ln\Big(\frac{1}{2\eta}\Big)\Big]^{-1}\ln \left[ m_Z\left(\frac{2L}{\eta}\right)\right],\ \ \forall n\leq n_*.
\end{equation}

 \textbf{Step 3.} (The H\"{o}lder continuity of the family  $\{E_\lambda(n)\}_{n\leq n_*}$    at the point $\lambda_0$) Taking into account  $\lambda_0\in \Lambda\setminus \Lambda_0$,  by \eqref{14.20} we have
 $E_k^{\lambda_0}(n) \subset U_{\lambda_0}(n,n-k)B(n-k)$  for all
 $k\geq 1$ and $n\leq n_*$,  so there must be   a subset  $\tilde{E}_k(n-k)\subset B(n-k)$ satisfying
 \begin{equation}\label{14.36}
 U_{\lambda_0}(n,n-k) \tilde{E}_k(n-k)=E^{\lambda_0}_k(n),\ \ \mathrm{Card}\left(\tilde{E}_k(n-k)\right)= \mathrm{Card}\left(E^{\lambda_0}_k(n)\right).
\end{equation}
By \eqref{14.21}  and \eqref{14.36},
\begin{equation}\label{14.37}
  \begin{split}
   &  U_{\lambda_0}(n+1,n-k) \tilde{E}_k(n-k)
      =U_{\lambda_0}(n+1, n) U_{\lambda_0}(n,n-k) \tilde{E}_k(n-k)\\
     =\ & U_{\lambda_0}(n+1, n)E^{\lambda_0}_k(n)
     \subset E^{\lambda_0}_{k+1}(n+1)
     =U_{\lambda_0}(n+1,n-k)\tilde{E}_{k+1}(n-k),
  \end{split}
\end{equation}
which implies
\begin{equation}\label{14.38}
 \tilde{E}_k(n-k)\subset \tilde{E}_{k+1}(n-k),\ \ \forall k\geq 1, n\leq n_*.
\end{equation}

 For each $\lambda\in\Lambda_0, n\leq n_*$ and $k\geq 1$, we define the set
\begin{equation}\label{14.39}
\tilde E^\lambda_k(n):=U_{\lambda}(n,n-k) \tilde{E}_k(n-k).
\end{equation}
Obviously, $\tilde E^\lambda_k(n)\subset U_{\lambda}(n,n-k) B(n-k)$, and it follows from formula \eqref{14.36} and estimate \eqref{14.25} that
\begin{equation}\label{14.40}
   \mathrm{Card}\left(\tilde E^\lambda_k(n)\right)\leq  \mathrm{Card}\left(\tilde{E}_k(n-k)\right)\leq\left[m_Z\left(\frac{2L}{\eta}\right)\right]^{k+1}, \ \ \forall k\geq 1, n\leq n_*.
\end{equation}
Therefore,  by the definition of $\Gamma(\lambda,\lambda_0)$ (see \eqref{14.5}) and Assumption  \ref{assumptionea},
\begin{equation*}
  \begin{split}
      & \|U_\lambda(n,n-k)x
      -U_{\lambda_0}(n,n-k)x\|_{X_n}\\
  \leq & \|U_\lambda(n,n-1)U_\lambda(n-1,n-k)x
  -U_\lambda(n, n-1)U_{\lambda_0}(n-1,n-k)x\|_{X_n}\\
     & +  \|U_\lambda(n, n-1)U_{\lambda_0}(n-1,n-k)x
     -U_{\lambda_0}(n, n-1)U_{\lambda_0}(n-1,n-k)x\|_{X_n}\\
     \leq  & L_1 \|U_\lambda(n-1,n-k)x
     -U_{\lambda_0}(n-1,n-k)x\|_{X_{n-1}}
     +\Gamma(\lambda,\lambda_0)\\
     \leq  &\cdots\leq\Gamma(\lambda,\lambda_0)\left(L_1^{k-1}
     +\cdots+1\right)\\
     \leq & \Gamma(\lambda,\lambda_0)L_1^k,\ \ \forall x\in B(n-k), \ n\leq n_*,  k\geq 1,
   \end{split}
\end{equation*}
that is,
\begin{equation}\label{14.41}
\sup_{x\in B(n-k)}\sup_{n\leq n_*}\|U_\lambda(n,n-k)x
-U_{\lambda_0}(n,n-k)x\|_{X_n}
\leq \Gamma(\lambda,\lambda_0)L_1^k,\ \ \forall k\geq 1.
\end{equation}
When $\lambda\in \Lambda_0$, which means  $0<\Gamma(\lambda,\lambda_0)<1$, we let
\begin{equation*}
 k_\Gamma: = \frac{-\log_{L_1} (\Gamma(\lambda,\lambda_0))}{1-\log_{L_1} (2\eta)}\ \ \hbox{and}\ \    \gamma:=\frac{-\log_{L_1} (2\eta)}{1-\log_{L_1} (2\eta)}.
\end{equation*}
A simple calculation shows that
\begin{equation*}
 \Gamma(\lambda,\lambda_0) L_1^{k_\Gamma}=(2\eta)^{k_\Gamma}
 =\left[\Gamma(\lambda,\lambda_0)\right]^\gamma.
\end{equation*}
Hence, it follows  from formulas \eqref{14.36}, \eqref{14.39},  \eqref{14.41} and  $\tilde E_k(n-k)\subset B(n-k)$ that
\begin{equation}\label{14.42}
\begin{split}
&\sup_{n\leq n_*}\mathrm{dist}^{symm}_{X_n}
\left(\tilde{E}^\lambda_k(n),
E^{\lambda_0}_k(n) \right)\\
      =\ &\sup_{n\leq n_*}\mathrm{dist}^{symm}_{X_n}
      \left(U_{\lambda}(n,n-k) \tilde{E}_k(n-k), U_{\lambda_0}(n,n-k) \tilde{E}_k(n-k) \right)\\
 \leq\  &  \Gamma(\lambda,\lambda_0)L_1^k\leq \left[\Gamma(\lambda,\lambda_0)\right]^\gamma,\ \ 1\leq k\leq  k_\Gamma.
\end{split}
\end{equation}

For each $b\in B(n-k)$, by the finiteness of the subset   $E_k^{\lambda_0}(n)$, there must be  a $b_0\in E_k^{\lambda_0}(n)$  such that
\begin{equation}\label{14.43}
  \begin{split}
 &\|U_{\lambda_0}(n,n-k)b-b_0\|_{X_n}\\
      =\ &\mathrm{dist}_{X_n}
     \left(U_{\lambda_0}(n,n-k)b, E^{\lambda_0}_k(n) \right)\\
    \leq\ &  \mathrm{dist}_{X_n}
    \left(U_{\lambda_0}(n,n-k)B(n-k), V^{\lambda_0}_k(n) \right)\\
    \leq\ & 2R_0(2\eta)^k,\ \ \forall k\geq 1, \ n\leq n_*,
  \end{split}
\end{equation}
where we have used formula \eqref{14.17}. The combination of \eqref{14.41}-\eqref{14.43} yields, for any $b\in B(n-k)$,
\begin{equation*}
  \begin{split}
   &\mathrm{dist}_{X_n}
   \left(U_{\lambda}(n,n-k)b, \tilde{E}^\lambda_k(n)\right)\\
   \leq\ & \|U_{\lambda}(n,n-k)b
   -U_{\lambda_0}(n,n-k)b\|_{X_n}+ \|U_{\lambda_0}(n,n-k)b-b_0\|_{X_n}
      +\mathrm{dist}_{X_n}\left(b_0, \tilde{E}^\lambda_k(n)\right)\\
    \leq\ & \Gamma(\lambda,\lambda_0) L_1^{k}+ 2R_0(2\eta)^k+ \left[\Gamma(\lambda,\lambda_0)\right]^\gamma\\
    \leq\ & (2R_0+2)(2\eta)^k, \ \ 1\leq k\leq k_\Gamma.
  \end{split}
\end{equation*}
By the arbitrariness of $b\in B(n-k)$,
\begin{equation}\label{14.44}
  \mathrm{dist}_{X_n}
  \left(U_{\lambda}(n,n-k)B(n-k), \tilde{E}^\lambda_k(n)\right)\leq  (2R_0+2)(2\eta)^k, \ \ \forall  1\leq k\leq k_\Gamma, \ n\leq n_*,\ \lambda\in \Lambda_0.
\end{equation}

For each $\lambda\in \Lambda_0$ and $n\leq n_*$, we define by induction the sets
\begin{equation*}
   E^\lambda_k(n)
   =\left\{
   \begin{array}{ll}
   \tilde{E}_k^\lambda(n), &\ \ 1\leq k\leq k_\Gamma,\\
    V_k^\lambda(n)\cup U_\lambda(n, n-1) E_{k-1}^\lambda(n-1), &\ \  k>k_\Gamma,
                       \end{array}
                     \right.
\end{equation*}
 and
  \begin{equation}\label{14.45}
 E_\lambda(n)= \Big[\bigcup_{k\geq1}
 E^\lambda_k(n)\Big]_{X_n}.
 \end{equation}
 It follows from formulas \eqref{14.18}-\eqref{14.17} and  \eqref{14.40} that
 \begin{align}
   & E^\lambda_k(n)\subset U_\lambda(n,n-k)B(n-k), \label{14.46}\\
  & U_\lambda(n+1, n)E^\lambda_k(n)\subset E^\lambda_{k+1}(n+1), \label{14.47}\\
  &  E_\lambda(n)\subset U_\lambda(n, n-1)B(n-1)\subset B(n), \label{14.48}\\
  &\mathrm{Card}\left(E^\lambda_k(n)\right) \leq \Big[m_Z\Big(\frac{2L}{\eta}\Big)\Big]^{k+1}
  \label{14.49}
\end{align}
 for all $\lambda\in \Lambda$, $n\leq n_*$ and $k\geq 1$.

 For every $\lambda\in \Lambda_0$, repeating the same arguments as in  Step 2, we obtain that the family $\{E_\lambda(n)\}_{n\leq n_*}$ also possesses the properties (i)-(iii) as shown in Step 2.
  Furthermore, it follows from formulas \eqref{14.17} and \eqref{14.44} that
 \begin{equation}\label{14.50}
\begin{split}
&\mathrm{dist}_{X_n}
\left(U_{\lambda}(n,n-k)B(n-k), E^\lambda_k(n)\right)\\
\leq\ &\begin{cases} \mathrm{dist}_{X_n}
\left(U_\lambda(n,n-k)B(n-k), \tilde{E}^\lambda_k(n)\right),\ \ 1\leq k\leq k_\Gamma,\\
\mathrm{dist}_{X_n}
\left(U_\lambda(n,n-k)B(n-k), V^\lambda_k(n)\right),\ \ k> k_\Gamma,
\end{cases}\\
\leq\ &(2R_0+2)(2\eta)^k, \ \ \forall n\leq n_*, k\geq 1.
\end{split}
\end{equation}

Now,  we show the H\"{o}lder continuity of the family  $\{E_\lambda(n)\}_{n\leq n_*}$    at the point $\lambda_0$.

For  any $\lambda\in \Lambda_0$, $n\leq n_*$ and any  $a\in \cup_{k\geq 1} E_k^\lambda(n)$,   there must be $a\in E_k^\lambda(n)$ for some $k\geq 1$. \\
When $1\leq k\leq k_\Gamma, E_k^\lambda(n)=\tilde{E}^\lambda_k(n)$, by \eqref{14.42}  we obtain
\begin{equation*}
 \mathrm{dist}_{X_n}\left(a, E_{\lambda_0}(n)\right)\leq \mathrm{dist}_{X_n}\left(a, E_k^{\lambda_0}(n)\right) \leq \mathrm{dist}_{X_n}\left(\tilde{E}^\lambda_k(n), E_k^{\lambda_0}(n)\right)\leq \left[\Gamma(\lambda, \lambda_0)\right]^\gamma.
\end{equation*}
When $k>k_\Gamma$, by \eqref{14.11} and  \eqref{14.46},
\begin{equation*}
 a\in E^\lambda_k(n)\subset  U_{\lambda}(n,n-k)B(n-k)=U_{\lambda}(n,n-[k_\Gamma])
 U_{\lambda}(n-[k_\Gamma],n-k)B(n-k),
\end{equation*}
then there exists an element $b\in U_{\lambda}(n-[k_\Gamma],n-k)B(n-k)$ such that  $a=U_{\lambda}(n,n-[k_\Gamma]) b$,
 and by \eqref{14.41} and \eqref{14.27},
\begin{equation*}
  \begin{split}
 &\mathrm{dist}_{X_n}\left(a, E_{\lambda_0}(n)\right)\\
 \leq\ & \mathrm{dist}_{X_n}
 \left(U_{\lambda}(n,n-[k_\Gamma])b,
        E_{\lambda_0}(n)\right) \\
       \leq\ & \|U_{\lambda}(n,n-[k_\Gamma])b- U_{\lambda_0}(n,n-[k_\Gamma])b\|_{X_n}+ \mathrm{dist}_{X_n}
        \left(U_{\lambda_0}(n,n-[k_\Gamma])b, E_{\lambda_0}(n)\right)\\
       \leq\ &\Gamma(\lambda, \lambda_0) L_1^{[k_\Gamma]}+(2R_0+2)(2\eta)^{[k_\Gamma]}
       \leq C\left[\Gamma(\lambda, \lambda_0)\right]^\gamma,
  \end{split}
\end{equation*}
where $C=1+\frac{R_0+1}{\eta}$ and $[k_\Gamma]$ denotes the integer part of $k_\Gamma$. By the arbitrariness of $a\in \cup_{k\geq 1} E_k^\lambda(n)$, we obtain
\begin{equation}\label{14.51}
\mathrm{dist}_{X_n}\left( E_\lambda(n),E_{\lambda_0}(n)\right)
 =\mathrm{dist}_{X_n}
 \left(\cup_{k\geq1}E^\lambda_k(n), E_{\lambda_0}(n)\right)\leq C\left[\Gamma(\lambda, \lambda_0)\right]^\gamma.
\end{equation}
Repeating the proof of \eqref{14.51} (changing the position of $\lambda$ and $\lambda_0$) and making use of \eqref{14.42} and \eqref{14.50}, we have
\begin{equation}\label{14.520}
 \mathrm{dist}_{X_n}\left( E_{\lambda_0}(n), E_\lambda(n)\right)\leq C\left[\Gamma(\lambda, \lambda_0)\right]^\gamma.
\end{equation}
The combination of \eqref{14.51} and \eqref{14.520} gives
\begin{equation}\label{14.52}
 \sup_{n\leq n_*}\mathrm{dist}^{symm}_{X_n}\left( E_\lambda(n),E_{\lambda_0}(n)\right)\leq C\left[\Gamma(\lambda, \lambda_0)\right]^\gamma,\ \  \forall  \lambda\in \Lambda_0.
\end{equation}

\textbf{Step 4.} (The structure  of the  family $\mathcal E_\lambda=\{E_\lambda(t)\}_{t\in\mathbb R}$, with $\lambda\in \Lambda$, and its properties) For any $t\in \mathbb{R}$, when $t\leq t_0$, there exists  unique integer  $n_t=[t]\leq n_*$ such that $t=n_t+s_t$ with $s_t\in [0, 1)$;   when $t>t_0$, we take $n_t\equiv n_*$ and $t=n_*+s_t$.   Let
\begin{equation}\label{14.53}
  E_\lambda(t)=U_\lambda(t, n_t)E_{\lambda}(n_t), \ \ \forall t\in\mathbb{R}, \lambda\in \Lambda.
\end{equation}
By \eqref{14.22}, \eqref{14.48} and condition \eqref{14.1} we have  $E_\lambda(t)\subset  B(t)$ for all $t\in\mathbb{R}$ and $\lambda\in \Lambda$.   We show that  for every $\lambda\in \Lambda, \mathcal E_\lambda=\{E_\lambda(t)\}_{t\in\mathbb R}$ is the desired family of Theorem \ref{42}.

(i) (Semi-invariance) For every $t\leq r\in \mathbb R$, we have   $n_t\leq n_r$. When $t\leq t_0$, we have $n_t\leq n_*$, and it follows from the semi-invariance \eqref{14.26} of $\{E_\lambda(n)\}_{n\leq n_*}$ and formula \eqref{14.53} that
\begin{equation*}
\begin{split}
  U_\lambda(r,t)E_\lambda(t)
  &=U_\lambda(r,t)U_\lambda(t,n_t)
  E_\lambda(n_t)\\
  &=U_\lambda(r,n_r)U_\lambda(n_r,n_t)
  E_\lambda(n_t)\\
&\subset U_\lambda(r,n_r)E_\lambda(n_r)
=E_\lambda(r), \ \ \forall \lambda\in \Lambda;
\end{split}
\end{equation*}
When $t>t_0$, $n_t\equiv n_*$ and we also have
 \begin{equation*}
  U_\lambda(r,t)E_\lambda(t)=U_\lambda(r,t)U_\lambda(t,n_*)
  E_\lambda(n_*)=U_\lambda(r,n_*)E_\lambda(n_*)
   =E_\lambda(r), \ \ \forall \lambda\in \Lambda.
\end{equation*}

(ii) (The compactness and the boundedness of the  fractal dimension)  For each $t\in \mathbb R$, let
\begin{equation*}
L_t= \left\{
       \begin{array}{ll}
         L_1, & t\leq t_0, \\
         L_1^{1+t-t_0}, & t>t_0.
       \end{array}
     \right.
\end{equation*}
We infer from condition $(H_3)$ that
\begin{equation}\label{14.54}
  \sup_{\lambda\in \Lambda}\|U_\lambda(t,n_t)x
  -U_\lambda(t,n_t)y\|_{X_t}\leq L_t\|x-y\|_{X_{n_t}},\ \ \forall x, y\in B(n_t),
\end{equation}
that is, the mapping $U_\lambda(t, n_t): B(n_t)\rightarrow X_t$ is Lipschitz continuous. Therefore, the set  $E_\lambda(t)$ is compact in $X_t$ for each $\lambda\in \Lambda$, and by \eqref{14.30},
\begin{equation*}
   \mathrm{dim}_f\left(E_\lambda(t); X_t\right)\leq \mathrm{dim}_f\left(E_\lambda(n_t); X_{n_t} \right)\leq \left[\ln\left(\frac{1}{2\eta}\right)\right]^{-1}\ln \left[ m_Z\left(\frac{2L}{\eta}\right)\right].
\end{equation*}

(iii) (Pullback exponential attractiveness)  For each $t\in \mathbb R$, let
\begin{equation*}
\tau_t= \left\{
       \begin{array}{ll}
         3, & t\leq t_0, \\
         3+t-t_0, & t>t_0.
       \end{array}
     \right.
\end{equation*}
For every $\tau\geq \tau_t$, there exists  unique integer $k_\tau\in \mathbb N$ such that $\tau\in [k_\tau+ 2, k_\tau+3)$, which implies
\begin{equation}\label{14.55}
n_t-k_\tau-(t-\tau)>1 \ \ \hbox{and}\ \ - k_\tau \leq -\tau + t-n_t+2\leq -\tau+ \tau_t.
\end{equation}
Then  it follows from   \eqref{14.1} and   \eqref{14.55} that
\begin{equation*}
\begin{split}
  U_\lambda(t, t-\tau)B(t-\tau)
=\ & U_\lambda(t, n_t )U_\lambda\left(n_t, n_t-k_\tau\right) U_\lambda\left(n_t-k_\tau, t-\tau\right) B(t-\tau)\\
\subset\ &U_\lambda(t, n_t )U_\lambda\left(n_t, n_t-k_\tau\right)B(n_t-k_\tau),
\end{split}
\end{equation*}
which combining with formulas \eqref{14.54},   \eqref{14.27} and   \eqref{14.53} yields
\begin{equation*}
  \begin{split}
    &\mathrm{dist}_{X_t}
    \left(U_\lambda(t,t-\tau)B(t-\tau), E_{\lambda}(t)\right)\\
\leq\ & \mathrm{dist}_{X_t}
\left(U_\lambda(t,n_t)U_\lambda\left(n_t, n_t-k_\tau\right)B(n_t-k_\tau), U_\lambda(t,n_t)E_\lambda(n_t)\right)\\
  \leq\    & L_t \mathrm{dist}_{X_{n_t}}
  \left(U_\lambda\left(n_t, n_t-k_\tau\right)B(n_t-k_\tau), E_\lambda(n_t)\right) \\
\leq\ & 2L_tR_0 (2\eta)^{k_\tau} =2L_tR_0e^{-\beta k_\tau }
\leq  2L_tR_0e^{\beta \tau_t}e^{-\beta\tau}= C(t)e^{-\beta\tau},
  \end{split}
\end{equation*}
with $\beta=\ln \frac{1}{2\eta}$ and $C(t)=2L_t R_0 e^{\beta \tau_t}$.

(iv) (The continuity of the   family $\mathcal E_\lambda=\{E_\lambda(t)\}_{t\in\mathbb R}$ at the point $\lambda_0$) For each $\lambda\in \Lambda_0$, it follows from  \eqref{14.52}-\eqref{14.54}, \eqref{14.48} and the definition of $\Gamma_t(\lambda, \lambda_0)$ that
\begin{equation*}
  \begin{split}
   & \mathrm{dist}^{symm}_{X_t}\left(E_{\lambda}(t), E_{\lambda_0}(t)\right)\\
     \leq\ & \mathrm{dist}^{symm}_{X_t}
     \left(U_\lambda(t,n_t)E_\lambda (n_t),U_{\lambda_0}(t,n_t)E_\lambda(n_t)\right) \\
    &+  \mathrm{dist}^{symm}_{X_t}
       \left(U_{\lambda_0}(t,n_t)E_\lambda (n_t),  U_{\lambda_0}(t, n_t)E_{\lambda_0 }(n_t)\right)\\
  \leq\  & {\left\{
             \begin{array}{ll}
              \Gamma(\lambda, \lambda_0)
              +L_t\mathrm{dist}^{symm}_{X_{n_t}}
              \left(E_{\lambda}(n_t), E_{\lambda_0}(n_t)\right) , & t\leq t_0, \\
              \sup_{x\in B(n_*)}\|U_\lambda(t, n_* )x- U_{\lambda_0}(t, n_* )x\|_{X_t}
  +L_t\mathrm{dist}^{symm}_{X_{n_t}}
  \left(E_{\lambda}(n_t), E_{\lambda_0}(n_t)\right) , & t>t_0
             \end{array}
           \right.}\\
    \leq\  & C_1(t)\Gamma_t(\lambda, \lambda_0),
  \end{split}
\end{equation*}
with the constant  $C_1(t)=1+L_t$  depending only  on   $t$. This completes the proof.
\end{proof}

\section{Application to the semilinear damped wave equation}
In this section,  we give an application of the above mentioned criteria to the  following semilinear damped  wave equations with perturbed time-dependent speed of propagation
\begin{align}
  & \r_\e(t) u_{tt}+\alpha u_t-\Delta u+f(u)=g(x), \ \ x\in \Omega, t>\tau, \label{5.1}\\
   & u |_{\partial \Omega}=0, \label{5.2}\\
 & u(x, \tau)=u_0(x), \ \ u_t(x, \tau)=u_1(x),\label{5.3}
\end{align}
where $\Omega\subset \mathbb{R}^3$ is a bounded domain with the  smooth boundary $\partial \Omega$, $\alpha>0$ is the damping coefficient, $g\in L^2(\Omega)$, the function $\r_\e(t)=\e \r(t)$ with perturbation parameter  $\e\in (0, 1]$, and the assumptions on $\r(t)$ and the nonlinearity $f(u)$ are the same as in \cite{pata}. For clarity, we quote them as follows.

 \begin{assumption}\label{assumption51} (i)  $\r\in C^1(\mathbb{R})$ is a decreasing bounded function satisfying
\begin{equation}\label{5.4}
\lim_{t\rightarrow +\infty}\r(t)=0 \ \ \hbox{and}\ \ \sup_{t\in\mathbb{R}}\Big[|\r(t)|+|\r'(t)|\Big]\leq L,
\end{equation}
for some constant $L>0$.

(ii) $f\in C^2(\mathbb{R})$ with  $f(0)=0$ satisfies
\begin{equation}\label{5.5}
  |f''(s)|\leq c(1+|s|), \ \ \forall s\in\mathbb{R},
\end{equation}
for some $c\geq 0$, and the dissipation conditions
\begin{equation*}
  \liminf_{|s|\rightarrow \infty}\frac{f(s)}{s}>- \lambda_1,\ \  2f(s)s\geq F(s)-\nu s^2-c_1, \ \ \forall s\in\mathbb{R},
  \end{equation*}
where $F(s)=\int_0^s f(r)\mathrm{d}r$,  $0<\nu<\lambda_1$, $c_1\geq 0$ and $\lambda_1>0$ is the first eigenvalue of the operator $A=-\tr$ with the Dirichlet boundary condition.
\end{assumption}

\subsection{Notations and main results}
Let $V_0=L^2(\Omega)$, with the inner product $(\cdot, \cdot)$ and norm $\|\cdot\|$. For any $\sigma>0$, let the  spaces
\begin{equation*}
  V_\sigma=D(A^{\sigma/2})\ \ \hbox{and}\ \ V_{-\sigma}= \hbox{the dual space of } V_\sigma
\end{equation*}
equipped with the inner product   and the norm
\begin{equation*}
  \ \ \langle w, v\rangle_\sigma=(A^{\sigma/2}w, A^{\sigma/2}v), \ \ \|w\|_\sigma=\|A^{\sigma/2}w\|, \ \ \forall \sigma\in\mathbb{R}.
\end{equation*}
Then $ V_\sigma$  are the Hilbert spaces and $ V_{\sigma_1}\hookrightarrow\hookrightarrow V_{\sigma_2}$ if $\sigma_1>\sigma_2$. In particular,
\[V_2=H^2(\Omega)\cap H_0^1(\Omega),\ \  V_1=H_0^1(\Omega),\ \  \|w\|_1=\|A^{1/2}w\|=\|\nabla w\|.\]
More preciously, one can infer from \cite{Amann} that
\begin{align*}
& V_\sigma\hookrightarrow H^\sigma(\Omega), \ \   \sigma\geq 0,\\
 &V_\sigma\hookrightarrow L^r(\Omega), \ \ 1\leq r\leq  \frac{6}{3-2\sigma}, \ \ L^q(\Omega)\hookrightarrow V_{-\sigma},\ \ q\geq  \frac{6}{3+2\sigma}, \ \  0\leq \sigma<\frac 32.
 \end{align*}
 For each $t\in \mathbb R$, $\sigma\in \mathbb R$ and $\e\in (0, 1]$, let the time-dependent phase spaces
\begin{equation*}
  \mathcal{H}_{t, \sigma}^\e= V_{\sigma+1}\times V_\sigma\ \ \hbox{and}\ \ \mathcal{H}_t^\e=\mathcal{H}_{t, 0}^\e= V_1\times V_0
\end{equation*}
  equipped with the time-dependent product norms
\begin{equation*}
  \|(u, v)\|^2_{\mathcal{H}_{t, \sigma}^\e}=\|u\|^2_{\sigma+1}+\r_\e(t)\|v\|^2_\sigma\ \ \hbox{and}\ \ \|(u, v)\|^2_{\mathcal{H}_t^\e}=\|u\|^2_1+\r_\e(t)\|v\|^2,
\end{equation*}
respectively. In particular, when $\e=1$, we omit the superscript $\e$ and let
\begin{equation*}
 \mathcal{H}_{t, \sigma}= \mathcal{H}_{t, \sigma}^1\ \ \hbox{and}\ \  \mathcal{H}_t=\mathcal{H}_t^1,
\end{equation*}
respectively. A simple calculation shows that
\begin{equation}\label{5.6}
  \e\|(u, v)\|^2_{\mathcal{H}_{t, \sigma}}\leq \|(u, v)\|^2_{\mathcal{H}_{t, \sigma}^\e} \leq \|(u, v)\|^2_{\mathcal{H}_{t, \sigma}}, \ \  \forall \e\in (0, 1],
\end{equation}
and
\begin{equation}\label{a1}
 a\|(u, v)\|^2_{\mathcal{H}_{t, \sigma}}\leq \|(u, v)\|^2_{\mathcal{H}_{t, \sigma}^\e} \leq \|(u, v)\|^2_{\mathcal{H}_{t, \sigma}},\ \ \forall \e\in [a, 1]\subset (0, 1].
\end{equation}
Obviously, all the spaces $\mathcal{H}^\e_t$ are   same as the linear space, while  formula \eqref{5.6} shows  $\|\cdot\|_{\mathcal{H}^\e_t }\sim\|\cdot\|_{\mathcal{H}_t}$ for each $\e\in(0,1]$.

We rewrite problem \eqref{5.1}-\eqref{5.3} at an abstract level
\begin{align}
   & \r_\e(t) u_{tt}+ \alpha u_t +Au +f(u)=g, \ \ t>\tau, \ \ \label{5.7}\\
   & u(\tau)=u_0, \ \ u_t(\tau)=u_1.  \label{5.8}
\end{align}

Under Assumption \ref{assumption51}, repeating    the similar argument as  Theorem 9.1 in \cite{pata},  one  easily  obtains the existence of the continuous  process $U_\e (t, \tau): \mathcal{H}^\e_\tau\rightarrow \mathcal H ^\e_t$ corresponding to problem \eqref{5.7}-\eqref{5.8}, that is
\begin{equation*}
U_\e(t, \tau)z_\tau=z^\e(t), \ \ \forall z_\tau =(u_0,u_1)\in \mathcal{H}^\e_\tau,\ t\geq \tau,
\end{equation*}
where $z_\e(t)=(u_\e(t), u_{\e t}(t))\in C(\mathbb R^+; V_1\times V_0)$ is the weak solution of problem \eqref{5.7}-\eqref{5.8} corresponding to the initial data  $z_\tau \in \mathcal{H}^\e_\tau$, with $\e\in(0, 1]$.
Besides, for any $z_{i\tau}\in \h_{\tau}$ with $\|z_{i\tau}\|_{\h_{\tau}}\leq R, i=1, 2$, and  any interval $[a, 1]\subset (0, 1]$,
\begin{equation}\label{4.10}
\sup_{\e\in [a, 1]}\|U_\e(t, \tau)z_{1\tau}- U_\e(t, \tau)z_{2\tau}\|_{\h_t}\leq \mathcal{Q}_a(R)e^{\mathcal{Q}_a(R) (t-\tau)}\|z_{1 \tau}-z_{2\tau}\|_{\h_\tau},\ \ t\geq \tau,
\end{equation}
hereafter  $\mathcal{Q}_a(\cdot)=a^{-1} \mathcal{Q}(\cdot)$  and $\mathcal Q(\cdot)$ is an increasing positive function independent of $t$.

Let the universe of problem  \eqref{5.7}-\eqref{5.8} be
\begin{equation*}
\mathscr D:=\{\mathcal D=\{D(t)\}_{t\in \mathbb R}\ |\   \emptyset\neq D(t)\subset \mathcal{H}_t, \ \forall t\in \mathbb R\ \ \hbox{and}\ \ \sup_{t\in \mathbb R} \|D(t)\|_{\mathcal{H}_t}<+\infty\}.
\end{equation*}

\begin{theorem}\label{52} Let Assumption \ref{52} be valid. Then the process $U_\e(t, \tau): \mathcal{H}_\tau\rightarrow \mathcal{H}_t$ has a minimal pullback $\mathscr D$-attractor $\mathcal A_\e=\{A_\e(t)\}_{t\in \mathbb R}$  for each $\e\in(0,1]$, and $\mathcal A_\e$ possesses the following properties:
\begin{enumerate}[(i)]
\item the sections $A_\e(t)$ are uniformly bounded in $\mathcal {H}_{t, 1}$, and their fractal dimensions in $\mathcal H_t$ are uniformly bounded, that is,
    \begin{equation*}
       \sup_{t\in \mathbb R}\|A_\e(t)\|_{\mathcal {H}_{t, 1}}<+\infty\ \ \hbox{and}\ \
   \sup_{t\in \mathbb R}\mathrm{dim}_f(A_\e (t), \h_t)<+\infty, \ \ \forall \e\in(0,1];
    \end{equation*}
  \item (Upper semicontinuity)  for any  point $\e_0\in (0,1]$,
\begin{equation*}
  \lim_{\e\rightarrow \e_0}\mathrm{dist}_{\mathcal{H}_t}\left(A_\e(t), A_{\e_0}(t)\right)=0, \ \ \forall t\in\mathbb{R};
\end{equation*}
  \item (Residual continuity)  there exists  a residual subset $\Lambda^*$ of $(0,1]$ such that
      \begin{equation*}
  \lim_{\e\rightarrow \e_0}\mathrm{dist}^{symm}_{\mathcal{H}_t}\left(A_\e(t), A_{\e_0}(t)\right)=0, \ \ \forall \e_0\in \Lambda^*,\  t\in\mathbb{R}.
\end{equation*}
\end{enumerate}
\end{theorem}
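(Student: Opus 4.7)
The plan is to verify the hypotheses of Theorem~\ref{31} and Theorem~\ref{32} for the parameterized family $\{U_\e(t,\tau)\}_{\e\in(0,1]}$, and to derive the uniform fractal-dimension bound in (i) via Theorem~\ref{42}. For a fixed $\e_0\in(0,1]$, I would work locally on $\Lambda:=[a,1]$ with some $a\in(0,\e_0)$, on which by \eqref{a1} the norms $\|\cdot\|_{\h^\e_t}$ and $\|\cdot\|_{\h_t}$ are uniformly equivalent; this sidesteps the degeneracy as $\e\to 0$.

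First I would multiply \eqref{5.7} by $u_t+\delta u$ with $\delta>0$ small, and use the dissipation of $f$ in Assumption~\ref{assumption51} to produce a pullback $\mathscr D$-absorbing family $\mathcal B=\{B(t)\}_{t\in\mathbb R}$ that is absorbing uniformly in $\e\in[a,1]$ and satisfies $\sup_{t\in\mathbb R}\|B(t)\|_{\h_t}<\infty$. The key step is then a decomposition $u_\e=v_\e+w_\e$ (in the spirit of \cite{pata}) in which $v_\e$ decays exponentially in $\h^\e_t$ while $w_\e$ is bounded in $\h_{t,1}^\e$ uniformly in $\e\in[a,1]$. This simultaneously delivers pullback $\mathscr D$-asymptotic compactness (hence existence of the minimal $\mathcal A_\e$) and, by invariance $A_\e(t)=U_\e(t,t-\tau)A_\e(t-\tau)$ and letting $\tau\to\infty$, the uniform regularity $\sup_{\e\in[a,1],\,t\in\mathbb R}\|A_\e(t)\|_{\h_{t,1}}<\infty$. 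In particular $\bigcup_{\e\in[a,1]}A_\e(t)\subset B(t)$, so $(L_2)$ is in hand. Rewriting the same decomposition as a quasi-stability inequality with $Z=\h_{t,1}$ (compact in $\h_t$ by Rellich), and invoking \eqref{4.10} for $(H_3)$, Assumption~\ref{assumptionea} holds; Theorem~\ref{42} (together with Corollary~\ref{ea1}, since $\mathcal B$ is uniformly pullback absorbing) yields a pullback $\mathscr D$-exponential attractor $\mathcal E_\e\supset\mathcal A_\e$ with $\sup_{\e,t}\dim_f(E_\e(t),\h_t)<\infty$, finishing (i).

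For $(L_3)$ I would fix $\e_0\in[a,1]$ and $x\in B(s)$, and study $w=u_\e-u_{\e_0}$ with common initial data $x$; it satisfies
\begin{equation*}
\rho_\e(t)w_{tt}+\alpha w_t+Aw+f(u_\e)-f(u_{\e_0})=(\rho_{\e_0}(t)-\rho_\e(t))\,u_{\e_0,tt},\qquad (w,w_t)|_{t=s}=0.
\end{equation*}
Applying the decomposition of Step~1 to $\partial_t z_{\e_0}$ transfers the $\h_{t,1}$-regularity to yield an $L^2_{\mathrm{loc}}$-in-time control of $u_{\e_0,tt}$ in some negative-order space (e.g.\ $V_{-1}$); a Gronwall estimate on the energy of $w$ then gives $\|(w,w_t)(t)\|_{\h_t}\le C_{t,s,a}|\e-\e_0|$, which is precisely $(L_3)$. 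Having verified Assumption~\ref{3.1'}, Theorem~\ref{31} yields (ii); and because $B(t)$ is bounded in $\h_{t,1}$ and hence compact in $\h_t$, Theorem~\ref{32}~case~(i) delivers a residual $\Lambda^*_a\subset[a,1]$ of continuity points. To promote this to $(0,1]$ one either takes a countable union along $a=1/n$, or equips $(0,1]$ with the complete metric $d(\e_1,\e_2)=|\e_1^{-1}-\e_2^{-1}|$ (which induces the same topology) and applies Theorem~\ref{32} directly, giving the residual $\Lambda^*\subset(0,1]$ demanded in (iii).

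The main obstacle is $(L_3)$: the perturbation $(\rho_{\e_0}-\rho_\e)u_{\e_0,tt}$ is singular in the sense that it loads the highest time derivative, so naive energy estimates will not close. The remedy is exactly the decomposition technique cited in the introduction as overcoming the hyperbolicity difficulty, used first to propagate $\h_{t,1}$-regularity from $\mathcal A_{\e_0}$ to its time-derivative and thereby produce enough control on $u_{\e_0,tt}$ to absorb the forcing in a weak norm via Gronwall. A secondary, more bookkeeping, subtlety is keeping all constants uniform on $[a,1]$ (via \eqref{5.6}-\eqref{a1}), and handling the metric issue on $(0,1]$ in the residual-continuity step as indicated above.
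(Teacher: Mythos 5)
Your overall strategy coincides with the paper's: work on compact subintervals $[a,1]$ where \eqref{a1} makes the norms uniformly equivalent, build a uniformly (in $\e$) pullback $\mathscr D$-absorbing family whose sections are bounded in $\h_{t,1}$ via the $u=v+w$ splitting, verify $(L_1)$--$(L_3)$ and apply Theorems \ref{31} and \ref{32}, obtain the dimension bound from the quasi-stability machinery of Section~3 together with the minimality $A_\e(t)\subset E_\e(t)$, and recover all of $(0,1]$ by a countable union over $a=1/(n+1)$ (this last is exactly what the paper does; your alternative of re-metrizing $(0,1]$ by $|\e_1^{-1}-\e_2^{-1}|$ is not viable globally, since the constants $\mathcal Q_a$, the radius $\mathcal R(a)$ and hence the family $\mathcal B$ all degenerate as $a\to 0^+$, so no single $\mathcal B$ satisfies $(L_2)$--$(L_3)$ on all of $(0,1]$).

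The one step that would not close as written is your treatment of $(L_3)$. You propose to control the singular forcing $(\rho_{\e_0}-\rho_\e)u_{\e_0,tt}$ by estimating $u_{\e_0,tt}$ in a negative-order space such as $V_{-1}$; but the basic energy identity for $\bar u=u_\e-u_{\e_0}$ is obtained by testing with $\bar u_t$, which lies only in $V_0=L^2$ at the $\h_t$ energy level, so a $V_{-1}$ bound on $u_{\e_0,tt}$ cannot be paired with it and the Gronwall argument does not go through. The remedy is simpler than the route you sketch (differentiating the equation or decomposing $\partial_t z_{\e_0}$) and is precisely the paper's Lemma \ref{lemma7}: since the data are taken in the $\h_{\tau,1}$-bounded set $B(\tau)$, Lemma \ref{lemma5} gives $u_{\e_0}(t)\in V_2$ and $u_{\e_0,t}(t)\in V_1$ uniformly, so one reads off from Eq.~\eqref{5.7} directly that $\rho(t)\|u_{\e_0,tt}\|_{L^2}\leq \mathcal Q_a(R)$ (using $V_2\hookrightarrow L^\infty$ to bound $\|f(u_{\e_0})\|$), and the forcing term $-(\rho_{\e_0}-\rho_\e)\langle u_{\e_0,tt},2\bar u_t\rangle\leq \frac{\alpha}{2}\|\bar u_t\|^2+\mathcal Q_a(R)|\e-\e_0|^2$ is then absorbed. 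With this correction your argument reproduces the paper's proof.
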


\begin{theorem}\label{54} Let Assumption \ref{52} be valid. Then the process $U_\e(t, \tau): \mathcal{H}_\tau\rightarrow \mathcal{H}_t$ has a pullback $\mathscr D$-exponential attractor $\mathcal E_\e=\{E_\e(t)\}_{t\in \mathbb R}$ for each $\e\in(0,1]$,  and $\mathcal E_\e$ possesses the following properties:
\begin{enumerate}[(i)]
\item (Regularity) the sections $E_\e(t)$ are uniformly bounded in $\mathcal {H}_{t, 1}$, that is,
    \begin{equation*}
    \sup_{t\in \mathbb R}\|E_\e(t)\|_{\mathcal {H}_{t, 1}}<\infty, \ \ \forall \e\in(0,1];
    \end{equation*}
  \item (H\"{o}ilder continuity) for any $\e_0\in(0,1]$, there exists  a    $\delta=\delta(\e_0)\in (0, 1)$ such that
\begin{equation*}
\mathrm{dist}^{symm}_{\mathcal{H}_t}\left(E_\e(t), E_{\e_0}(t)\right)\leq C(t)|\e-\e_0|^\gamma,\ \ \forall t\in\mathbb{R},
\end{equation*}
 whenever $|\e-\e_0|<\delta$, where  $C(t)$ and $\gamma: 0<\gamma<1$ are some positive constants.
\end{enumerate}
\end{theorem}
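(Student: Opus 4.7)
I would realize $\{U_\e(t,\tau)\}_{\e\in(0,1]}$ as a parameterized family of processes on $\{\h_t\}_{t\in\mathbb R}$ and verify the hypotheses of Theorem \ref{42} together with Corollary \ref{ea2}, uniformly in $\e$. Because the norms $\|\cdot\|_{\h_t^\e}$ are mutually equivalent only with constants $\sqrt{\e}$ (cf.\ \eqref{5.6}), I would work locally: fix $\e_0\in(0,1]$ and restrict to $\e\in[a,1]$ with $a=\e_0/2$, where \eqref{a1} and \eqref{4.10} give uniform constants.

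\textbf{Absorbing and regular families.} Following the dissipative analysis of Conti--Pata \cite{pata} but tracking constants in $\e$ via \eqref{a1}, I would first obtain a uniform pullback $\mathscr{D}$-absorbing family $\mathcal D_0=\{\mathbb B_t(\mathcal R)\}_{t\in\mathbb R}$ in $\h_t$. Inside $\mathcal D_0$, I would construct a closed positively invariant family $\mathcal B=\{B(t)\}$ with $B(t)\subset \mathbb B_t(R_0)$ and $R_0<\mathcal R$, as the $\h_t$-closure of $\bigcup_{\e\in[a,1]}\bigcup_{\tau\geq T_0}U_\e(t,t-\tau)\mathbb B_{t-\tau}(R_0)$. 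The standard splitting $u=v+\tilde w$, with $v$ a linear exponentially decaying wave and $\tilde w$ driven by the subcritical part of $f$ (Assumption \ref{assumption51}(ii)), then upgrades $B(t)$ to the regular bound $\sup_{t}\|B(t)\|_{\h_{t,1}}<\infty$, verifying $(H_1)$. Condition \eqref{14.8} of Corollary \ref{ea2} comes from the same exponential dissipativity, and \eqref{14.9'} on $\mathbb B_{t-\tau}(\mathcal R)$ is just \eqref{4.10}.

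\textbf{Quasi-stability and existence.} For $z_1,z_2\in B(t-T)$, the difference $w=u_\e(\cdot;z_1)-u_\e(\cdot;z_2)$ satisfies
\begin{equation*}
\e\rho(t)w_{tt}+\alpha w_t-\Delta w+f(u_\e(\cdot;z_1))-f(u_\e(\cdot;z_2))=0.
\end{equation*}
I would apply the Chueshov--Lasiecka multiplier scheme \cite{Chueshov2008,Chueshov2015}: the standard energy identity combined with the multiplier $w$, supplemented by the $\h_{t,1}$-bound on $B(t)$ and the growth bound \eqref{5.5}, yields, for $T$ large enough, $\e\in[a,1]$ and $t\leq t_0$,
\begin{equation*}
\|U_\e(t,t-T)z_1-U_\e(t,t-T)z_2\|_{\h_t}^2\leq\eta^2\|z_1-z_2\|_{\h_{t-T}}^2+C\int_{t-T}^t\|w(s)\|_{L^4}^2\,ds,
\end{equation*}
with $\eta\in(0,1/2)$ independent of $\e$. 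Choosing $Z=C([t-T,t];L^4(\Omega))$ and using the compact embedding $V_1\hookrightarrow\hookrightarrow L^4(\Omega)$, the integral defines the required compact seminorm; the Lipschitz bound \eqref{14.3} for $K_t^\e$ again follows from \eqref{4.10}, and $(H_3)$ is \eqref{4.10} restricted to $\tau\in[0,T]$. Theorem \ref{42} and Corollary \ref{ea2} then produce a pullback $\mathscr D$-exponential attractor $\mathcal E_\e=\{E_\e(t)\}$ for each $\e\in[a,1]$, with $E_\e(t)\subset B(t)$, giving (i).

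\textbf{Hölder continuity.} For the same datum $x\in B(t-s)$, put $w=u_\e-u_{\e_0}$. Then
\begin{equation*}
\e_0\rho(t)w_{tt}+\alpha w_t-\Delta w+f(u_\e)-f(u_{\e_0})=(\e_0-\e)\rho(t)u_{\e,tt}.
\end{equation*}
Because $x\in B(t-s)$ and $B(t-s)$ is bounded in $\h_{t-s,1}$, a direct reading of the original equation bounds $\|u_{\e,tt}(s)\|$ uniformly in $\e\in[a,1]$ on the finite window $[t-T,t]$. A Gronwall estimate on $\|(w,w_t)\|^2_{\h_s^{\e_0}}$ then gives
\begin{equation*}
\Gamma(\e,\e_0)\leq C(\e_0)|\e-\e_0|,
\end{equation*}
and the same estimate on $[n_*T,t]$ controls the second summand of $\Gamma_t(\e,\e_0)$ when $t>t_0$. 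Shrinking $|\e-\e_0|<\delta(\e_0)$ puts $\e\in\Lambda_0$, and Theorem \ref{42}(iii) delivers $\mathrm{dist}^{symm}_{\h_t}(E_\e(t),E_{\e_0}(t))\leq C(t)|\e-\e_0|^\gamma$ for some $\gamma\in(0,1)$.

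\textbf{Main obstacle.} The principal difficulty is extracting the quasi-stability inequality with a constant $\eta<1/2$ \emph{uniformly} in $\e\in[a,1]$: the coefficient $\e\rho(t)$ multiplies $w_{tt}$, so the wave-type energy is degenerate as $\e$ decreases, and the multiplier combination must be tuned (via the equivalence \eqref{a1} and the $\h_{t,1}$-regularity of $B(t)$) so that all constants entering $\eta$ stay bounded away from $1/2$ over $[a,1]$; this is what \emph{the method developed here allows to overcome the difficulty of the hyperbolicity} refers to in the introduction.
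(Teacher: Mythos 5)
Your proposal follows essentially the same route as the paper: a positively invariant, exponentially attracting family $\mathcal B=\{B(t)\}$ that is uniformly bounded in $\h_{t,1}$ (the paper's Lemma \ref{lemma8}), a quasi-stability estimate that exploits this extra regularity (Lemma \ref{lemma6}, where the $V_2\hookrightarrow L^\infty$ bound on solutions reduces the nonlinear difference to $\sup_s\|\bar u(s)\|_{L^2}$), verification of $(H_1)$--$(H_3)$ and of the hypotheses of Corollary \ref{ea2} on the slab $[a,1]$, and a Lipschitz bound $\Gamma(\e,\e_0)\leq C|\e-\e_0|$ obtained exactly as in Lemma \ref{lemma7} by reading $\rho(t)\|u_{\e tt}\|$ off the equation; Theorem \ref{42}(iii) then gives the H\"older estimate. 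The one point to tighten is your choice of $Z=C([t-T,t];L^4(\Omega))$: a sup or integral of the $L^4$ norm is \emph{not} a compact seminorm on that space itself, so $Z$ must carry the stronger topology of $C([0,T];V_1)$ with $\phi_t\in C([0,T];L^2)$ (as in the paper), where the $L^2$-- or $L^4$--valued sup-seminorm is compact by Simon's theorem and $K^\e_t\xi=u_\e(\cdot+t-T)$ is Lipschitz into $Z$ by \eqref{4.10}; with that correction your argument matches the paper's.
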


\subsection{Some essential estimates }
In order to  prove Theorem \ref{52} and Theorem \ref{54}, we need  the following lemmas which will play key role later. In this subsection, we always assume the interval $[a, 1]\subset (0, 1]$.

\begin{lemma}\label{lemma1}  Let  Assumption \ref{assumption51} be valid, and  $z_\tau\in \h_\tau$ with $\|z_\tau\|_{\h_\tau}\leq R$. Then  there exist positive constants $\kappa=\kappa(a)$ and  $R_a$ independent of $R$ and $\e$ such that
\begin{equation*}
\sup_{\e\in [a, 1]}\|U_\e(t, \tau)z_\tau\|_{\h_t}\leq \mathcal{Q}_a(R) e^{-\kappa (t-\tau)}+ R_a,\ \ \forall t\geq \tau,
\end{equation*}
and
\begin{equation}\label{a2}
\sup_{\e\in [a, 1]}\int_\tau^\infty \|u_{\e t}(t)\|^2\d t \leq \mathcal{Q}_a(R), \ \ \forall t\geq \tau,
\end{equation}
 hereafter $\left(u_\e(t), u_{\e t}(t) \right)=U_\e(t, \tau)z_\tau$,  $\mathcal{Q}_a(\cdot)$ and $\mathcal{Q}(\cdot)$  are as shown in \eqref{4.10}, and $R_a$ satisfies $\lim_{a\rightarrow 0^+}R_a=+\infty$.
 \end{lemma}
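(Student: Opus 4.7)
The plan is to construct a Lyapunov-type functional adapted to the degenerating norm $\|\cdot\|_{\h_t^\e}$, derive a differential inequality yielding exponential decay modulo a constant, and convert the resulting estimate to the $\e$-independent norm $\|\cdot\|_{\h_t}$ via \eqref{a1}. The conversion costs a factor $a^{-1}$, which is precisely what forces $R_a\to+\infty$ as $a\to 0^+$.

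First I would introduce the modified energy
\begin{equation*}
\Phi_\e(t)=E_0(t)+\delta\,\e\r(t)(u_\e,u_{\e t}),\qquad E_0(t)=\tfrac12\|u_\e\|_1^2+\tfrac{\e\r(t)}{2}\|u_{\e t}\|^2+\int_\Omega F(u_\e)\,\d x-(g,u_\e),
\end{equation*}
with $\delta>0$ to be chosen small. Using the dissipation assumption $\liminf_{|s|\to\infty}f(s)/s>-\lambda_1$ (which yields $\int F(u)\ge -\tfrac{\mu}{2}\|u\|^2-c$ for some $\mu<\lambda_1$), Poincar\'e's inequality, the polynomial growth $|F(s)|\le C(s^2+|s|^4)$ coming from \eqref{5.5}, and the Sobolev embedding $V_1\hookrightarrow L^4$, I would verify
\begin{equation*}
c_1\,\|z_\e(t)\|_{\h_t^\e}^2-c_2\le \Phi_\e(t)\le \mathcal Q\bigl(\|z_\e(t)\|_{\h_t^\e}\bigr),
\end{equation*}
with constants independent of $\e\in(0,1]$, provided $\delta$ is small enough that Young's inequality absorbs the cross term $\delta\e\r(u_\e,u_{\e t})$ into the quadratic part of $E_0$ (note $\e\r(t)\le L$).

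Next I would test \eqref{5.7} by $u_{\e t}+\delta u_\e$ and compute
\begin{equation*}
\tfrac{d}{dt}\Phi_\e=-\alpha\|u_{\e t}\|^2+\tfrac{\e\r'(t)}{2}\|u_{\e t}\|^2-\delta\|u_\e\|_1^2-\delta(f(u_\e),u_\e)+\delta\e\r(t)\|u_{\e t}\|^2+\delta(g,u_\e)+\mathcal R_\e,
\end{equation*}
where $\mathcal R_\e$ gathers the Young-controllable terms $\alpha\delta(u_\e,u_{\e t})$ and $\delta\e\r'(t)(u_\e,u_{\e t})$. The sign condition $\r'\le0$ kills the second summand on the right; the dissipation $2f(s)s\ge F(s)-\nu s^2-c_1$ converts $(f(u_\e),u_\e)$ into a multiple of $\int F(u_\e)$, so that a multiple of $\Phi_\e$ can be reconstructed on the right-hand side; and choosing $\delta$ sufficiently small absorbs $\mathcal R_\e$ together with $\delta\e\r(t)\|u_{\e t}\|^2$ into $\|u_{\e t}\|^2$ and $\|u_\e\|_1^2$. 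The net outcome is
\begin{equation*}
\tfrac{d}{dt}\Phi_\e(t)+\kappa_0\Phi_\e(t)\le C,
\end{equation*}
with $\kappa_0,C>0$ independent of $\e\in(0,1]$. Gronwall's inequality then gives $\Phi_\e(t)\le\Phi_\e(\tau)e^{-\kappa_0(t-\tau)}+C/\kappa_0$; combined with the two-sided bound above and with $\|z_\tau\|_{\h_\tau^\e}\le\|z_\tau\|_{\h_\tau}\le R$ (from \eqref{5.6}), this gives exponential decay in $\h_t^\e$. Passing to $\h_t$ via $\|z\|_{\h_t}^2\le a^{-1}\|z\|_{\h_t^\e}^2$ for $\e\in[a,1]$ (from \eqref{a1}) yields the first estimate with $\kappa=\kappa_0/2$ and $R_a\sim\sqrt{C/a}$, so indeed $R_a\to+\infty$ as $a\to 0^+$.

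For the integral bound \eqref{a2}, I would only test \eqref{5.7} by $u_{\e t}$, obtaining $\tfrac{d}{dt}E_0=-\alpha\|u_{\e t}\|^2+\tfrac{\e\r'(t)}{2}\|u_{\e t}\|^2\le -\alpha\|u_{\e t}\|^2$ thanks to $\r'\le 0$. Integrating on $[\tau,T]$ and using the lower bound $E_0\ge -C$ established above together with the polynomial upper bound $E_0(\tau)\le\mathcal Q(R)$ gives $\alpha\int_\tau^T\|u_{\e t}\|^2\,\d t\le \mathcal Q(R)+C$ uniformly in $T$ and in $\e\in[a,1]$; letting $T\to\infty$ produces \eqref{a2}. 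The main obstacle is to keep every intermediate constant uniform in $\e$ even though the natural energy norm $\|\cdot\|_{\h_t^\e}$ degenerates as $\e\to 0^+$; this is why the entire Lyapunov analysis must be carried out inside $\h_t^\e$, with the factor $a^{-1}$ entering only at the final conversion step, precisely as dictated by the structure of \eqref{a1}.
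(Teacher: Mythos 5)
Your proposal is correct and follows essentially the same route as the paper, which simply invokes the energy-functional argument of Lemma 10.3 in \cite{pata} (the perturbed Lyapunov functional $E_0+\delta\rho_\e(t)(u_\e,u_{\e t})$ tested against $u_{\e t}+\delta u_\e$, yielding a uniform-in-$\e$ differential inequality in the $\|\cdot\|_{\h^\e_t}$ norm) and then converts to $\|\cdot\|_{\h_t}$ via \eqref{a1}, which is exactly where the factor $a^{-1}$ and the blow-up of $R_a$ enter. You have merely written out the details that the paper delegates to the citation.
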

\begin{proof}
Repeating the similar  argument as Lemma 10.3 in \cite{pata}, we obtain
\begin{equation}\label{b1}
a \|U_\e(t, \tau)z_\tau\|_{\h_t}\leq \|U_\e(t, \tau)z_\tau\|_{\h^\e_t}\leq\q(\|z_\tau\|_{\h^\e_\tau}) e^{-\kappa (t-\tau)}+ R_0,\ \ \forall \e\in [a, 1], \  t\geq \tau,
\end{equation}
 and formula \eqref{a2}, that is,  the conclusions of Lemma \ref{lemma1} hold.
\end{proof}

\begin{remark}\label{remark54}
Lemma  \ref{lemma1} shows that the family  $\{\mathbb B_t(R_1)\}_{t\in\mathbb R}\in \mathscr D$,  with $R_1>R_a$, is a uniformly (w.r.t. $\e\in [a, 1]$) pullback $\mathscr D$-absorbing family of the processes $U_\e(t, \tau):\h_\tau\rightarrow \h_t$,  $\e\in [a, 1]$.
\end{remark}

\begin{lemma}\label{lemma55} \cite{Arrieta,Grasselli}
Let Assumption \ref{assumption51} be valid.  Then  $f$ can be split into the sum $f=f_0+f_1$, where $f_0, f_1\in C^2(\mathbb R)$ satisfying
\begin{equation}\label{b3}
f_0(0)=f'_0(0)=0, \ \   f_0(s)s\geq 0,\ \ |f''_0(s)|\leq C(1+|s|),  \ \ \forall s\in \mathbb R,
\end{equation}
and
\begin{equation}\label{b2}
|f'_1 (s)|\leq C, \ \ \forall s\in \mathbb R.
\end{equation}
\end{lemma}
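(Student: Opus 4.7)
The plan is to construct $f_0$ by modifying $f$ only on a bounded interval around the origin, so that $f_1 = f - f_0$ is a $C^2$ function that is essentially supported (up to a bounded linear tail) on a compact set, whereas $f_0$ inherits the superlinear (cubic-type) growth of $f$. This way $f_1$ will automatically have bounded derivative, while the growth bound $|f_0''(s)| \leq C(1+|s|)$ follows from that of $f''$ outside the modification zone.

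First, I would use the dissipation assumption $\liminf_{|s|\to\infty} f(s)/s > -\lambda_1$ to fix $M>0$ and $\mu > -\lambda_1$ with $f(s)s \geq \mu s^2$ for $|s| \geq M$. Since $f$ is $C^2$, the quantities $f, f', f''$ are bounded on the compact interval $[-M,M]$, so one can pick a constant $K > 0$ large enough that $(f(s) + K s\,\chi_0(s))\,s \geq 0$ for all $s\in[-M,M]$, where $\chi_0$ is a smooth even cutoff equal to $1$ on $[-M,M]$ and vanishing outside $[-2M,2M]$. For $|s|\geq 2M$ the inequality $f(s)s \geq \mu s^2$ already holds and $\chi_0 \equiv 0$, so after choosing $K$ large enough we have $(f(s)+Ks\,\chi_0(s))\,s\geq 0$ everywhere. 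Next, to force vanishing at first order at the origin, I would choose a second smooth even cutoff $\chi_1$ with $\chi_1 \equiv 1$ on $[-M/2, M/2]$ and supported in $[-M,M]$, and define
\begin{equation*}
 f_0(s) := f(s) + Ks\,\chi_0(s) - \bigl(f'(0) s\bigr)\chi_1(s) + \theta(s),
\end{equation*}
where $\theta \in C^2_c(\mathbb R)$ is an odd bump, supported in $[-M,M]$, chosen so that $\theta'(0) = 0$ and so that the subtraction $-f'(0)s\,\chi_1(s)$ does not destroy the sign property $f_0(s)s\geq 0$ near the origin (this is possible because near $0$ one has $f(s)=f'(0)s+O(s^2)$, so after removing $f'(0)s\,\chi_1$ the leading term is at least quadratic and the sign of $f_0(s)s$ can be controlled by adding a small cubic-type term $\theta(s)\sim c s^3$). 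Then $f_0\in C^2(\mathbb R)$, $f_0(0)=0$, $f_0'(0)=0$, $f_0(s)s\geq 0$ by construction, and $|f_0''(s)|\leq C(1+|s|)$ because it equals $f''(s)$ outside $[-2M,2M]$ and is bounded on the compact modification zone.

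Finally, set $f_1(s) := f(s) - f_0(s) = -Ks\,\chi_0(s) + f'(0) s\,\chi_1(s) - \theta(s)$. Since $\chi_0, \chi_1, \theta$ are smooth and compactly supported, $f_1$ is $C^2$, compactly supported in $[-2M,2M]$, and its derivative is bounded by a constant depending on $K$, $f'(0)$ and the cutoffs. The main subtle step is the simultaneous matching of the four requirements on $f_0$ in the transition annulus $M/2\leq |s|\leq 2M$: $C^2$ regularity, $f_0(0)=f_0'(0)=0$, sign-preservation $f_0(s)s\geq 0$, and the growth bound on $f_0''$. This is purely a one-dimensional gluing problem, solved by tuning $K$, $\theta$ and the cutoffs, and is exactly the place where the cited works \cite{Arrieta,Grasselli} carry out the bookkeeping; the remaining properties then follow by inspection.
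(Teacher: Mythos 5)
The paper does not prove this lemma; it simply cites \cite{Arrieta,Grasselli}, so the only question is whether your construction is sound. It is not, and the gap is at infinity, not in the gluing region you flag as the delicate step. You arrange $f_1$ to be compactly supported, so for $|s|\geq 2M$ you have $f_0=f$ and you need $f(s)s\geq 0$ there. But Assumption \ref{assumption51} only gives $\liminf_{|s|\to\infty}f(s)/s>-\lambda_1$, which permits $f(s)s\geq \mu s^2$ with $\mu$ \emph{negative}. Concretely, $f(s)=-\tfrac{\lambda_1}{2}s$ satisfies every hypothesis of the assumption (take $\nu\in[\tfrac{3}{4}\lambda_1,\lambda_1)$ in the second dissipation condition), yet $f(s)s<0$ for all $s\neq 0$, so no modification of $f$ on a compact set can produce $f_0$ with $f_0(s)s\geq 0$. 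The cure, and what the cited works actually do, is to let $f_1$ carry a \emph{globally} linear piece: pick $k>0$ large enough that $(f(s)+ks)s\geq 0$ for all $s$ (possible since $|f(s)|\leq L_M|s|$ on $[-M,M]$ because $f(0)=0$, and $f(s)/s\geq -\lambda_1+\delta$ outside), set $f_0(s)=\phi(s)\bigl(f(s)+ks\bigr)$ with $\phi$ a smooth cutoff vanishing near $0$ and equal to $1$ off a neighborhood of $0$, and $f_1=f-f_0$. Then $f_1(s)=-ks$ for large $|s|$, which is exactly why the lemma only asks for $|f_1'|\leq C$ rather than compact support, and the multiplicative cutoff makes $f_0\equiv 0$ near the origin, settling $f_0(0)=f_0'(0)=0$ and the sign condition there at a stroke.

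Two further problems with your formula as written. First, $f_0'(0)=f'(0)+K\chi_0(0)-f'(0)\chi_1(0)+\theta'(0)=K\neq 0$, since you imposed $\theta'(0)=0$; the $+Ks\chi_0(s)$ term you need for the sign on $[-M,M]$ is precisely what destroys the vanishing of the derivative at the origin. Second, if you repair this by taking $\theta'(0)=-K$, then near $0$ you are left with $f_0(s)=\tfrac12\bigl(f''(0)+\theta''(0)\bigr)s^2+O(s^3)$, and $f_0(s)s\geq 0$ forces the quadratic coefficient to vanish as well (indeed $f_0\in C^2$ with $f_0(0)=f_0'(0)=0$ and $f_0(s)s\geq 0$ automatically implies $f_0''(0)=0$); a cubic bump cannot dominate a nonzero $s^2$ term, so the additive correction requires matching $\theta$ to two orders of the Taylor expansion of $f$ at $0$, none of which you specify. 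The multiplicative-cutoff construction avoids all of this bookkeeping.
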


By virtue of Lemma \ref{lemma55},  we split the solution $\left(u_\e(t), u_{\e t}(t) \right)=U_\e(t, \tau)z_\tau$  into the sum
\begin{equation*}
U_\e(t, \tau)z_\tau=U_{\e, 0}(t, \tau)z_\tau+ U_{\e, 1}(t, \tau)z_\tau,
\end{equation*}
where  $U_{\e, 0}(t, \tau)z_\tau=(v_\e(t), v_{\e t}(t))$   solves
\begin{equation*}
{\left\{
  \begin{array}{ll}
    \rho_\e (t) v_{\e tt} +\alpha v_{\e t}+ A v_\e +f_0(v_\e)=0, \ \ t>\tau, \\
    U_{\e, 0}(\tau, \tau)z_\tau=z_\tau,
  \end{array}
\right.}
\end{equation*}
and $U_{\e, 1}(t, \tau)z_\tau=(w_\e(t), w_{\e t}(t))$ solves
\begin{equation*}
{\left\{
  \begin{array}{ll}
   \rho_\e(t) w_{\e tt} +\alpha w_{\e t}+ A w_\e +f(u_\e)-f_0(v_\e)=g, \ \ t>\tau,  \\
    U_{\e, 1}(\tau, \tau)z_\tau=0.
  \end{array}
\right.}
\end{equation*}

\begin{lemma}\label{lemma2}Let  Assumptions \ref{assumption51} be valid, and $z_\tau\in \mathcal{H}_\tau$ with $\|z_\tau\|_{\mathcal{H}_\tau}\leq R$. Then
\begin{align}
& \sup_{\e\in [a, 1]}\|(v_\e(t), v_{\e t}(t))\|_{\h_t}\leq \mathcal{Q}_a(R) e^{-\kappa (t-\tau)}, \label{b6}\\
& \sup_{\e\in [a, 1]}\|(w_\e(t), w_{\e t}(t))\|_{\h_{t, 1/3}}\leq \mathcal{Q}_a(R), \ \ \forall t\geq \tau, \label{b7}
\end{align}
where $\kappa>0$ is as shown in Lemma \ref{lemma1}.
\end{lemma}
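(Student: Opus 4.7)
The plan is to handle the two estimates separately, since the splitting was designed precisely so that $v_\e$ carries the full energy but dissipates exponentially (because $f_0$ is coercive), while $w_\e$ absorbs the forcing $g$ and the controllable difference $f(u_\e)-f_0(v_\e)$ and gains the fractional regularity $1/3$.

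For \eqref{b6}, I would multiply the $v_\e$-equation by $v_{\e t}+\delta v_\e$ for a sufficiently small $\delta>0$ and construct the Lyapunov functional
\begin{equation*}
\Phi_\e(t)=\tfrac12\|v_\e\|_1^2+\tfrac{\rho_\e(t)}{2}\|v_{\e t}\|^2+\int_\Omega F_0(v_\e)\,\d x+\delta\rho_\e(t)(v_\e,v_{\e t}),
\end{equation*}
where $F_0(s)=\int_0^sf_0(r)\d r\ge 0$ by $f_0(s)s\ge 0$ (Lemma~\ref{lemma55}). Since $\rho'_\e(t)=\e\rho'(t)\le0$, all ``bad'' boundary terms from differentiating $\rho_\e$ have the right sign, and the damping $\alpha\|v_{\e t}\|^2$ together with the $\delta$-correction yields a differential inequality $\Phi_\e'(t)+2\kappa\Phi_\e(t)\le 0$ with $\kappa=\kappa(a)>0$ depending only on $a$, $\alpha$, and $L$. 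The norm equivalence \eqref{a1} then transfers the decay from $\|\cdot\|_{\h^\e_t}$ to $\|\cdot\|_{\h_t}$ uniformly in $\e\in[a,1]$.

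For \eqref{b7}, I would test the $w_\e$-equation in the stronger pairing by $A^{1/3}w_{\e t}+\delta A^{1/3}w_\e$ and set up the higher-order functional
\begin{equation*}
\Psi_\e(t)=\tfrac12\|w_\e\|_{4/3}^2+\tfrac{\rho_\e(t)}{2}\|w_{\e t}\|_{1/3}^2+\delta\rho_\e(t)\langle w_\e,w_{\e t}\rangle_{1/3}.
\end{equation*}
Again $\rho_\e'\le 0$ ensures the time derivative of the $w_{\e t}$-term does not hurt, and the damping contributes $\alpha\|w_{\e t}\|_{1/3}^2$. The forcing to be absorbed is $(g-f(u_\e)+f_0(v_\e),A^{1/3}w_{\e t})$, which I would bound by $\|g-f(u_\e)+f_0(v_\e)\|_{V_{-1/3}}\|w_{\e t}\|_{1/3}$ together with Young's inequality. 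The key estimates of the source are: $g\in L^2\hookrightarrow V_{-1/3}$; $f_1(u_\e)$ is uniformly bounded in $L^2$ thanks to \eqref{b2} and Lemma~\ref{lemma1}; and writing $f_0(u_\e)-f_0(v_\e)=\int_0^1 f_0'(v_\e+sw_\e)\d s\cdot w_\e$ with $|f_0'(\xi)|\le C(1+|\xi|^2)$ gives a factor in $L^3$ (using $V_1\hookrightarrow L^6$) times $w_\e\in L^6$, hence a bound in $L^2\hookrightarrow V_{-1/3}$ by $\mathcal Q_a(R)$ via Lemma~\ref{lemma1}. One therefore arrives at $\Psi_\e'(t)+2\kappa'\Psi_\e(t)\le\mathcal Q_a(R)$, and Gronwall with \eqref{a1} yields the stated uniform bound in $\h_{t,1/3}$.

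The principal difficulty is Step~2: the critical growth of $f$ forces the exact fractional index $1/3$ (so that $V_{4/3}\hookrightarrow L^{18}$ is Sobolev-admissible and $V_{1/3}\hookrightarrow L^{18/7}$ controls the dual pairing), and one must verify that all nonlinear estimates on $f_0(u_\e)-f_0(v_\e)$ depend only on $\|U_\e(t,\tau)z_\tau\|_{\h_t}$, which is available uniformly in $\e\in[a,1]$ from Lemma~\ref{lemma1}. A secondary technicality is keeping the dissipation constant $\kappa'$ independent of $\e$, which requires using \eqref{a1} to absorb the $\e$-dependence into the lower bound $a$, so that $\mathcal Q_a$ (and not a worse $\e$-dependent quantity) appears on the right.
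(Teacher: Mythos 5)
Your treatment of \eqref{b6} is essentially the right reconstruction of what the paper invokes (the paper itself just cites Lemmas 11.2--11.3 of \cite{pata} and transfers the bounds via the norm equivalence \eqref{a1}); the only loose end there is that closing $\Phi_\e'+2\kappa\Phi_\e\le 0$ with $\int_\Omega F_0(v_\e)\,\d x$ inside the functional requires a coercivity relation of the type $F_0(s)\lesssim f_0(s)s$, which is part of the construction in \cite{Arrieta,Grasselli} but not among the properties \eqref{b3} you quote.

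The genuine gap is in your estimate of the critical term for \eqref{b7}. The pairing $\langle h, A^{1/3}w_{\e t}\rangle$ satisfies $\langle h, A^{1/3}\phi\rangle\le \|h\|_{\sigma}\,\|\phi\|_{2/3-\sigma}$, so to dominate it by $\|w_{\e t}\|_{1/3}$ you must take $\sigma=1/3$, i.e.\ you need the source $g-f(u_\e)+f_0(v_\e)$ bounded in $V_{1/3}$, not in $V_{-1/3}$; the bound $\|h\|_{V_{-1/3}}\|w_{\e t}\|_{1/3}$ you propose is not a valid estimate of $\langle h,A^{1/3}w_{\e t}\rangle$ (the exponents give $\|h\|_{-1/3}\|w_{\e t}\|_{1}$, which the $\h_{t,1/3}$-energy does not control). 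An $L^2$ bound on the composition therefore does not suffice: one would need a fractional chain rule placing $f(u_\e)-f_0(v_\e)$ in $H^{1/3}$. The way this is actually handled (see the proof of Lemma \ref{lemma3} in the paper, which mirrors \cite{pata}) is to put the term $2\langle f(u_\e)-f_0(v_\e)-g, A^{1/3}w_\e\rangle$ \emph{inside} the Lyapunov functional, so that differentiation in time moves the derivative onto the nonlinearity and produces terms of the form $\langle f'(\cdot)\,\partial_t(\cdot), A^{1/3}w_\e\rangle$ with the full power $A^{1/3}$ landing on $w_\e\in V_{4/3}\hookrightarrow L^{18}$; the resulting H\"older exponents (e.g.\ $\|1+|u_\e|+|v_\e|\|_{L^6}\|w_\e\|_{L^{18}}\|u_{\e t}\|\|A^{1/3}w_\e\|_{L^{18/5}}$) then add up exactly to $1$, and the leftover $\|v_\e\|_1^2$-coefficient is absorbed by the Gronwall-type lemma using $\int_\tau^\infty\|v_\e\|_1^2\,\d t<\infty$. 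Without this restructuring your differential inequality for $\Psi_\e$ does not close.
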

\begin{proof} Repeating   the same  proof as   Lemma 11.2 and Lemma 11.3  in \cite{pata}, we obtain
\begin{align*}
& a\|U_{\e, 0}(t, \tau)z_\tau\|_{\h_t}\leq \|U_{\e, 0}(t, \tau)z_\tau\|_{\h^\e_t}\leq  \q(R)e^{-\kappa (t-\tau)},\\
& a\|U_{\e, 1}(t, \tau)z_\tau\|_{\h_{t, 1/3}}\leq \|U_{\e, 1}(t, \tau)z_\tau\|_{\h^\e_{t, 1/3}}\leq \q(R), \ \ \forall \e\in [a, 1], \  t\geq \tau,
\end{align*}
which imply \eqref{b6} and \eqref{b7}.
\end{proof}

On the basis of Lemma \ref{lemma2}, we further give a delicate estimate.

\begin{lemma}\label{lemma3} Let  Assumption \ref{assumption51} be valid, and  $z_\tau\in \h_{\tau, 1/3}$ with $\|z_\tau\|_{\h_\tau}\leq R$. Then
\begin{equation*}
\sup_{\e\in [a, 1]}\|U_\e(t, \tau)z_\tau\|_{\h_{t, 1/3}}\leq \mathcal{Q}_a(\|z_\tau\|_{\h_{\tau, 1/3}}+R)e^{-\kappa (t-\tau)}+\mathcal{Q}_a(R), \ \ \forall   t\geq \tau.
\end{equation*}
\end{lemma}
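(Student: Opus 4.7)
The plan is to exploit the splitting $U_\e(t,\tau)z_\tau = U_{\e,0}(t,\tau)z_\tau + U_{\e,1}(t,\tau)z_\tau$ already set up for Lemma \ref{lemma2}. The $U_{\e,1}$ part is handled for free: Lemma \ref{lemma2} gives the uniform bound $\|U_{\e,1}(t,\tau)z_\tau\|_{\h_{t,1/3}} \le \mathcal{Q}_a(R)$, which supplies exactly the second term $\mathcal{Q}_a(R)$ of the target estimate, using only the $\h_\tau$-norm of the initial data. So the real content is to prove an exponentially decaying higher-order bound
$$\sup_{\e\in[a,1]}\|U_{\e,0}(t,\tau)z_\tau\|_{\h_{t,1/3}} \le \mathcal{Q}_a(\|z_\tau\|_{\h_{\tau,1/3}}+R)\,e^{-\kappa(t-\tau)}$$
for $(v_\e,v_{\e t})=U_{\e,0}(t,\tau)z_\tau$ solving the \emph{homogeneous} equation $\rho_\e(t)v_{\e tt}+\alpha v_{\e t}+Av_\e+f_0(v_\e)=0$.

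Next I would test the equation with $A^{1/3}v_{\e t}+\delta A^{1/3}v_\e$ in $V_0$ (equivalently, apply $A^{1/6}$ and repeat the $\h_t$-type energy identity of Lemma 11.2 in \cite{pata} at the $V_{1/3}$-level). This yields, after the standard computation,
\begin{equation*}
\frac{d}{dt}\mathcal{F}_\e(t) + \kappa\,\mathcal{F}_\e(t) \le C\bigl|(f_0(v_\e),A^{1/3}v_{\e t}+\delta A^{1/3}v_\e)\bigr| + (\text{terms from }\rho'_\e)
\end{equation*}
with the Lyapunov functional
\begin{equation*}
\mathcal{F}_\e(t) = \tfrac{1}{2}\|v_\e\|_{4/3}^2 + \tfrac{1}{2}\rho_\e(t)\|v_{\e t}\|_{1/3}^2 + \delta\rho_\e(t)(A^{1/3}v_\e,v_{\e t}),
\end{equation*}
which is equivalent to $\|(v_\e,v_{\e t})\|_{\h^\e_{t,1/3}}^2$ for $\delta$ small. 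The $\rho'_\e(t)$ contribution is non-positive because $\rho$ is decreasing (Assumption \ref{assumption51}(i)), so it is absorbed in the dissipation. The nonlinear term is where Lemma \ref{lemma2} enters decisively: since we already know $\|(v_\e,v_{\e t})\|_{\h_t}\le\mathcal{Q}_a(R)e^{-\kappa(t-\tau)}$, the quantity $f_0(v_\e)$ is bounded in a good space via the embeddings $V_1\hookrightarrow L^6$ and the growth $|f_0''(s)|\le C(1+|s|)$.

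The main obstacle is estimating $(f_0(v_\e),A^{1/3}v_{\e t})$ in a way that produces the claimed decay rather than merely boundedness. The natural route is to pair $f_0(v_\e)\in V_{2/3}\hookrightarrow V_{-1/3}$ (using $f_0(v_\e)\in L^q$ for $q$ close to $2$, via the cubic growth of $f_0$ and $v_\e\in V_1\hookrightarrow L^6$) against $A^{1/3}v_{\e t}$ tested in $V_{-1/3}\times V_{1/3}$ duality, producing a factor $\|v_\e\|_1\cdot\|v_\e\|_1^2$ that, by Lemma \ref{lemma2}, carries the exponential factor $e^{-\kappa(t-\tau)}$. After a Young-type splitting this term is absorbed into $\kappa\mathcal{F}_\e(t)/2$ plus a forcing of the form $\mathcal{Q}_a(R)e^{-3\kappa(t-\tau)}$. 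Gronwall then yields $\mathcal{F}_\e(t)\le \mathcal{F}_\e(\tau)e^{-\kappa(t-\tau)}+\mathcal{Q}_a(R)e^{-\kappa(t-\tau)}$, and using the equivalence $a\|\cdot\|_{\h_{t,1/3}}^2 \le \|\cdot\|_{\h^\e_{t,1/3}}^2\le\|\cdot\|_{\h_{t,1/3}}^2$ from \eqref{a1} (with $\mathcal{F}_\e(\tau)\le \mathcal{Q}(\|z_\tau\|_{\h_{\tau,1/3}})$) completes the bound for $U_{\e,0}$. Combining with the $U_{\e,1}$ contribution gives the lemma.
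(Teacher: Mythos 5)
Your overall strategy differs from the paper's: the paper runs the higher-order energy estimate on the full solution $u_\e$ of \eqref{5.7}, using the decomposition $u_\e=v_\e+w_\e$ of Lemma \ref{lemma2} only inside the nonlinear estimates, whereas you propose to estimate $v_\e$ and $w_\e$ separately in $\h_{t,1/3}$ (the $w_\e$ half is indeed free from \eqref{b7}). That variant is admissible in principle, but your treatment of the term you yourself flag as the main obstacle, $\langle f_0(v_\e),A^{1/3}v_{\e t}\rangle$, does not work. Since $v_{\e t}\in V_{1/3}$ only, $A^{1/3}v_{\e t}$ lives in $V_{-1/3}$, so this pairing requires $f_0(v_\e)\in V_{1/3}$ (a positive-order fractional bound needing a chain-rule estimate), not $f_0(v_\e)\in V_{-1/3}$ as you use; the embedding $V_{2/3}\hookrightarrow V_{-1/3}$ pairs against $V_{1/3}$, which $A^{1/3}v_{\e t}$ does not belong to. Moreover the bound you announce, a factor $\|v_\e\|_1\cdot\|v_\e\|_1^2$, contains no factor of $\|v_{\e t}\|_{1/3}$ or $\|v_\e\|_{4/3}$ at all, so it cannot be the outcome of any H\"older estimate of that pairing. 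As written, the key differential inequality is therefore not established.

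The paper avoids this term altogether: it puts $2\langle f(u)-g,A^{1/3}u\rangle$ \emph{inside} the Lyapunov functional $\Phi_\e$ in \eqref{c3} (checking in \eqref{c4} that this only costs an additive $\mathcal Q_a(R)$), so that after differentiation the multiplier term $2\langle f(u),A^{1/3}u_t\rangle$ cancels and one only has to estimate $2\langle f'(u)u_t,A^{1/3}u\rangle$, where $A^{1/3}u\in V_{2/3}\hookrightarrow L^{18/5}$ is a genuine function and H\"older with the embeddings \eqref{c8} applies; the splitting into $I_1,I_2,I_3$ then uses $\|w\|_{4/3}\le\mathcal Q_a(R)$ and the integrability of $\|v\|_1^2$ to close the Gronwall-type argument of \cite{patacpaa}. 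Your route can be repaired the same way: add $2\langle f_0(v_\e),A^{1/3}v_\e\rangle$ to your $\mathcal F_\e$, estimate $2\langle f_0'(v_\e)v_{\e t},A^{1/3}v_\e\rangle\le C\left(\|v_\e\|_1+\|v_\e\|_1^2\right)\|v_{\e t}\|_{1/3}\|v_\e\|_{4/3}$, and use \eqref{b6} to supply the integrable coefficient. Note also that the $\rho'_\e$ contribution is not entirely sign-favorable: the cross term $2\delta\rho'_\e\langle v_{\e t},A^{1/3}v_\e\rangle$ is sign-indefinite and must be estimated via $|\rho'_\e|\le L$ as in the paper, though this is minor.
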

\begin{proof} For simplicity, we omit the subscript $\e$ and let $u=u_\e$.
For any  $\e\in [a, 1]$ and $\delta\in (0, 1)$, we define the functional along the solution $ (u(t), u_t(t))=U_\e(t, \tau)z_\tau$ as follows
\begin{equation}\label{c3}
\Phi_\e(t)=\rho_\e(t)\|u_t\|^2_{1/3}+\|u\|^2_{4/3}+ 2\langle f(u)-g, A^{\frac13} u\rangle +\delta \left[ 2\rho_\e(t) \langle u_t, A^{\frac13} u\rangle+\alpha \|u\|^2_{1/3}\right],\ \ t\geq \tau.
\end{equation}
It follows from Lemma \ref{lemma1} and condition \eqref{5.5} that
\begin{equation*}
\|f(u)\|\leq C\| 1+|u|^3\|\leq C\left(1+ \|u\|^3_{L^6(\Omega)}\right)\leq C\left(1+\|u\|^3_1\right)\leq \mathcal{Q}_a(R),\ \ t\geq \tau,
\end{equation*}
which implies that
\begin{equation}\label{c4}
2|\langle f(u)-g, A^{\frac13} u\rangle|\leq 2\left[\|f(u)\|+\|g\| \right]\|u\|_{2/3}\leq \frac14 \|u\|^2_{4/3}+ \mathcal{Q}_a(R),\ \ t\geq \tau.
\end{equation}
By condition \eqref{5.4},
\begin{equation}\label{c5}
2\rho_\e(t) | \langle u_t, A^{\frac13} u\rangle|\leq 2\rho_\e(t) \|u_t\|_{1/3}\|u\|_{1/3}
\leq \alpha \|u\|^2_{1/3}+  \frac{L}{\alpha} \rho_\e (t)\|u_t\|^2_{1/3}.
\end{equation}
The combination of \eqref{a1} and \eqref{c3}-\eqref{c5} yields
\begin{equation}\label{c6}
\frac{ a}{2} \|(u(t), u_t(t))\|^2_{\h_{t, 1/3}}-\mathcal{Q}_a(R)\leq \Phi_\e(t)\leq 2 \|(u(t), u_t(t))\|^2_{\h_{t, 1/3}}+\mathcal{Q}_a(R)
\end{equation}
for $\delta>0$ suitably small.
\medskip

Taking the multiplier  $2A^{\frac13} u_t + 2\delta A^{\frac13} u$ in  Eq. \eqref{5.7} yields
\begin{equation}\label{c7}
\begin{split}
&\frac{\d}{\d t}  \Phi_\e (t) + \delta \Phi_\e (t)+\left(2 \alpha -\rho'_\e (t)- 3\delta \rho_\e(t)\right)\|u_t\|^2_{1/3}+\delta \|u\|^2_{4/3}-\delta^2 \alpha \|u\|^2_{1/3}\\
=\ & \left(2\delta^2 \rho_\e(t) + 2\delta \rho'_\e(t)\right) \langle u_t, A^{\frac13}u \rangle + I_1+I_2+I_3,
\end{split}
\end{equation}
where
\begin{align*}
 I_1=2\langle \left[f'_0(u)-f'_0(v)\right]u_t, A^{\frac13} u\rangle,\ \
 I_2= 2\langle f'_0(v)u_t, A^{\frac13} u\rangle,\ \
I_3= 2\langle f'_1(u)u_t, A^{\frac13} u\rangle.
\end{align*}
A simple calculation shows that
\begin{align*} &
\left(2 \alpha -\rho'_\e (t)- 3\delta \rho_\e(t)\right)\|u_t\|^2_{1/3}+\delta \|u\|^2_{4/3}-\delta^2 \alpha \|u\|^2_{1/3}\geq \frac32\alpha\|u_t\|^2_{1/3}+\frac\delta 2\|u\|^2_{4/3},\\
&\left(2\delta^2 \rho_\e(t) + 2\delta \rho'_\e(t)\right) \langle u_t, A^{\frac13}u \rangle \leq 4L\|u_t\|\|A^{\frac13}u \|\leq \frac\delta 4\|u\|^2_{4/3}+\mathcal{Q}_a(R)
\end{align*}
for $\delta>0$ suitably small.
Taking into account the Sobolev embedding:
\begin{equation}\label{c8}
V_1\hookrightarrow L^6(\Omega), \ \ V_{4/3}\hookrightarrow L^{18}(\Omega), \ \ V_{2/3}\hookrightarrow L^{\frac{18}{5}}(\Omega), \ \ V_{1/3}\hookrightarrow L^{\frac{18}{7}}(\Omega),
\end{equation}
and making use of Lemma \ref{lemma1}, formulas \eqref{b2}-\eqref{b7}, we have
\begin{equation*}
\begin{split}
I_1 & \leq C\left(1+ \|u\|_{L^6(\Omega)}+ \|v\|_{L^6(\Omega)} \right)\|w\|_{L^{18}(\Omega)}
     \|u_t\| \| A^{\frac13} u\|_{L^{\frac{18}{5}}(\Omega)}\\
    &\leq C\left(1+ \|u\|_1+ \|v\|_1 \right)\|w\|_{4/3}
     \|u_t\| \|u\|_{4/3}\\
    &\leq \frac \delta 8 \|u\|^2_{4/3}+\mathcal{Q}_a(R),\\
I_2 & \leq C\|v\|_{L^6(\Omega)}\|u_t\|_{L^{\frac{18}{7}}(\Omega)}
       \| A^{\frac13} u\|_{L^{\frac{18}{5}}(\Omega)}\\
    & \leq \alpha \|u_t\|^2_{1/3}+ \mathcal{Q}_a(R) \|v\|^2_1 \|u\|^2_{4/3},
\end{split}
\end{equation*}
and
\begin{equation*}
I_3\leq C\|u_t\|\| A^{\frac13} u\|\leq \frac\delta 8 \|u\|^2_{4/3}+\mathcal{Q}_a(R).
\end{equation*}
Inserting above estimates into \eqref{c7} and making use of estimates \eqref{c5}-\eqref{c6} receive
\begin{equation}\label{c9}
\frac{\d}{\d t} \Phi_\e (t) + \delta \Phi_\e (t)\leq q(t) \Phi_\e (t) +\mathcal{Q}_a(R),\ \ t>\tau,
\end{equation}
where $q(t)= \mathcal{Q}_a(R) \|v(t)\|^2_1$ satisfies (see estimate  \eqref{b6})
\begin{equation*}
\int_\tau^\infty q(s)\d s\leq \mathcal{Q}_a(R)\int_\tau^\infty  \|v(s)\|^2_1\d s \leq \mathcal{Q}_a(R).
\end{equation*}
Applying the Gronwall-type lemma (cf. \cite{patacpaa}) to \eqref{c9} and making use of estimate \eqref{c6} turn out  the conclusions of Lemma \ref{lemma3}.
\end{proof}
\medskip

 For any fixed $\e\in (0, 1]$, $\tau\in \mathbb R$ and $z_\tau\in \h_{\tau, 1/3}$, to avoid using too many symbols we still write
\begin{equation*}
U_\e(t, \tau)z_\tau=U_{\e, 0}(t, \tau)z_\tau+ U_{\e, 1}(t, \tau)z_\tau,
\end{equation*}
where $U_{\e, 0}(t, \tau)z_\tau=(v_\e(t), v_{\e t}(t))$  solves
\begin{equation*}
{\left\{
  \begin{array}{ll}
    \rho_\e (t)v_{\e tt} +\alpha v_{\e t}+ A v_\e =0, \ \ t>\tau, \\
    U_{\e, 0}(\tau, \tau)z_\tau=z_\tau,
  \end{array}
\right.}
\end{equation*}
and $U_{\e, 1}(t, \tau)z_\tau=(w_\e(t), w_{\e t}(t))$ solves
\begin{equation*}
{\left\{
  \begin{array}{ll}
   \rho_\e (t) w_{\e tt} +\alpha w_{\e t}+ A w_\e +f(u_\e)=g,  \ \ t>\tau,  \\
    U_{\e, 1}(\tau, \tau)z_\tau=0.
  \end{array}
\right.}
\end{equation*}
 For any  $z_\tau\in \h_{\tau, 1/3}$ with  $\|z_\tau\|_{\h_{\tau, 1/3}}\leq R$, by Lemma \ref{lemma3} and estimate \eqref{a1}, we have
\begin{equation}\label{c10}
\sup_{\e\in [a, 1]}\|U_\e(t, \tau)z_\tau\|_{\h_{t, 1/3}}\leq \mathcal{Q}_a(R), \  \ t\geq \tau.
\end{equation}
Then repeating the same argument as Lemma 11.6 in \cite{pata}, we have

\begin{lemma}\label{lemma4}Let Assumption \ref{assumption51} be valid, and  $z_\tau\in \h_{\tau, 1/3}$ with $\|z_\tau\|_{\h_{\tau, 1/3}}\leq R$. Then
\begin{align}
&\sup_{\e\in [a, 1]} \|U_{\e, 0}(t, \tau)z_\tau\|_{\h_t}\leq \mathcal{Q}_a(R)e^{-\kappa (t-\tau)}, \label{c11}\\
& \sup_{\e\in [a, 1]}\|U_{\e, 1}(t, \tau)z_\tau\|_{\h_{t, 1}}\leq \mathcal{Q}_a(R),\ \ t\geq \tau.\label{c12}
\end{align}
\end{lemma}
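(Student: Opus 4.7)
My plan is to follow the standard two-level energy scheme from the Pata paper cited in the statement, treating the two estimates separately and exploiting the fact that $\e\in[a,1]$ together with \eqref{a1} lets me pass between $\h_t^\e$ and $\h_t$ norms freely (at the cost of factors depending on $a$).

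For the linear part \eqref{c11} on $U_{\e,0}(t,\tau)z_\tau=(v_\e,v_{\e t})$, the equation is a purely linear damped wave equation with the time-dependent coefficient $\rho_\e$. I would introduce the modified energy
\begin{equation*}
\Psi_\e(t)=\rho_\e(t)\|v_{\e t}\|^{2}+\|v_\e\|_{1}^{2}+2\delta\rho_\e(t)\langle v_{\e t},v_\e\rangle+\delta\alpha\|v_\e\|^{2},
\end{equation*}
which, for $\delta>0$ small, is equivalent to $\|(v_\e,v_{\e t})\|_{\h_t^\e}^{2}$. Testing the equation with $2v_{\e t}+2\delta v_\e$ and using $\rho'_\e\le 0$ (from \eqref{5.4}) together with the damping term $\alpha v_{\e t}$ yields $\tfrac{d}{dt}\Psi_\e+\delta'\Psi_\e\le 0$ for some $\delta'>0$, which integrates to exponential decay in $\h_t^\e$. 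Applying \eqref{a1} in the form $a\|\cdot\|_{\h_t}^{2}\le\|\cdot\|_{\h_t^\e}^{2}$ converts this to the required decay in $\h_t$ uniformly over $\e\in[a,1]$, with rate $\kappa$ chosen smaller than the one of Lemma \ref{lemma1} if necessary.

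For the nonlinear part \eqref{c12} on $U_{\e,1}(t,\tau)z_\tau=(w_\e,w_{\e t})$, I need to upgrade to $\h_{t,1}=V_{2}\times V_{1}$, using that the full solution satisfies \eqref{c10}. I would test the $w_\e$-equation with the higher-order multiplier $2Aw_{\e t}+2\delta Aw_\e$ and set
\begin{equation*}
\Phi_\e^{1}(t)=\rho_\e(t)\|w_{\e t}\|_{1}^{2}+\|w_\e\|_{2}^{2}+2\delta\rho_\e(t)\langle w_{\e t},Aw_\e\rangle+\delta\alpha\|w_\e\|_{1}^{2}-2\langle g-f(u_\e),Aw_\e\rangle,
\end{equation*}
where the last term is the critical device: since $g\in L^{2}$ only, the term $(g,Aw_{\e t})$ cannot be handled pointwise, so I integrate by parts in time, writing $(g-f(u_\e),Aw_{\e t})=\tfrac{d}{dt}(g-f(u_\e),Aw_\e)+(f'(u_\e)u_{\e t},Aw_\e)$, and fold the exact-derivative part into $\Phi_\e^{1}$. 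The functional $\Phi_\e^{1}$ is equivalent (up to a $\q_a(R)$-size additive constant) to $\|(w_\e,w_{\e t})\|_{\h_{t,1}^\e}^{2}$ because $u_\e\in V_{4/3}\hookrightarrow L^{18}$ gives $\|f(u_\e)\|\le\q_a(R)$, so $|\langle g-f(u_\e),Aw_\e\rangle|\le\tfrac14\|w_\e\|_{2}^{2}+\q_a(R)$.

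The main obstacle is estimating the remainder $(f'(u_\e)u_{\e t},Aw_\e)$ generated by the time integration by parts, since $u_{\e t}$ is controlled only in $V_{0}$ and $f'(u_\e)$ grows quadratically. The plan is to split $f=f_{0}+f_{1}$ as in Lemma \ref{lemma55}, bound the $f_{1}$-contribution by $C\|u_{\e t}\|\|Aw_\e\|\le\tfrac\delta{8}\|w_\e\|_{2}^{2}+\q_a(R)\|u_{\e t}\|^{2}$, and handle the $f_{0}$-contribution by Hölder using $|f'_{0}(u_\e)|\le C(1+u_\e^{2})\in L^{9}$ (via $u_\e\in L^{18}$), pairing against $u_{\e t}\in L^{2}$ and $Aw_\e\in L^{2}$ after an interpolation that buys a small power of $\|w_\e\|_{2}$ absorbed on the left and leaves a factor $q(t)=\q_a(R)(1+\|u_\e\|_{4/3}^{2})\|u_{\e t}\|^{2}$. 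This leads to a differential inequality $\tfrac{d}{dt}\Phi_\e^{1}+\delta'\Phi_\e^{1}\le q(t)\Phi_\e^{1}+\q_a(R)$ with $\int_\tau^{\infty}q(s)\,ds\le\q_a(R)$ by \eqref{a2} and \eqref{c10}, whence the Gronwall-type lemma of \cite{patacpaa} yields \eqref{c12} with a bound independent of $t-\tau$, and uniform in $\e\in[a,1]$ after converting norms with \eqref{a1}.
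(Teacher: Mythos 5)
Your overall strategy is the one the paper intends: the paper's own ``proof'' consists of the single sentence that one repeats the argument of Lemma 11.6 in \cite{pata}, i.e.\ exactly the multiplier/energy scheme you describe, with \eqref{a1} used to trade $\h_t^\e$ for $\h_t$ at a cost $\mathcal Q_a(\cdot)$. Your treatment of the linear part \eqref{c11} is correct and standard. The issue is in the nonlinear part.

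The step that fails as written is the H\"older pairing for the remainder $\langle f_0'(u_\e)u_{\e t},Aw_\e\rangle$. You place $f_0'(u_\e)\in L^9$ (via $|f_0'(s)|\le C(1+|s|^2)$ and $u_\e\in V_{4/3}\hookrightarrow L^{18}$) and then pair against $u_{\e t}\in L^2$ and $Aw_\e\in L^2$; but $\tfrac19+\tfrac12+\tfrac12>1$, so this triple product is not integrable by H\"older, and no interpolation ``buying a small power of $\|w_\e\|_2$'' can close it, since the missing integrability would have to come from $Aw_\e$ in $L^{18/7}$, i.e.\ $w_\e\in V_{2+1/3}$, which is beyond the regularity being proved. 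The repair is already in your hands: \eqref{c10} controls the full solution in $\h_{t,1/3}$, so $u_{\e t}\in V_{1/3}\hookrightarrow L^{18/7}$ by \eqref{c8}, and then $|\langle f_0'(u_\e)u_{\e t},Aw_\e\rangle|\le \|f_0'(u_\e)\|_{L^9}\|u_{\e t}\|_{L^{18/7}}\|w_\e\|_2\le \mathcal Q_a(R)\|w_\e\|_2$, which is absorbed. In fact the whole detour of integrating $\langle f(u_\e),Aw_{\e t}\rangle$ by parts in time is unnecessary (it is needed only for $g\in L^2$): as in the paper's proof of Lemma \ref{lemma5}, $\|f(u_\e)\|_1^2\le C(1+\|u_\e\|_{4/3}^4)\|u_\e\|_{4/3}^2\le\mathcal Q_a(R)$ by \eqref{c10}, so $\langle f(u_\e),2Aw_{\e t}\rangle\le 2\|f(u_\e)\|_1\|w_{\e t}\|_1$ is handled pointwise, and a plain Gronwall inequality (no integrable $q(t)$, no Gronwall-type lemma of \cite{patacpaa}) already yields \eqref{c12}.
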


Based on Lemma \ref{lemma4}, we  further give the desired  estimate.

\begin{lemma}\label{lemma5} Let Assumption \ref{assumption51} be valid, and  $z_\tau\in \h_{\tau,1}$ with $\|z_\tau\|_{\h_{\tau, 1/3}}\leq R$. Then
\begin{equation*}
\sup_{\e\in [a,1]}\|U_\e(t, \tau)z_\tau\|_{\h_{t, 1}}\leq \mathcal{Q}_a(\|z_\tau\|_{\h_{\tau, 1}})e^{-\kappa (t-\tau)}+\mathcal{Q}_a(R),\ \ t\geq \tau.
\end{equation*}
\end{lemma}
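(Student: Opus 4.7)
The plan is to exploit the additive splitting $U_\e(t,\tau)z_\tau = U_{\e,0}(t,\tau)z_\tau + U_{\e,1}(t,\tau)z_\tau$ introduced just before Lemma \ref{lemma4} (with $f(u_\e)$ kept in the inhomogeneous component $w_\e$ and the homogeneous component $v_\e$ satisfying the purely linear time-dependent wave equation $\rho_\e(t) v_{tt}+\alpha v_t+Av=0$), estimate the two summands separately in $\h_{t,1}$, and then combine via the triangle inequality. The inhomogeneous part is already taken care of: estimate \eqref{c12} in Lemma \ref{lemma4} directly yields
\begin{equation*}
\sup_{\e\in[a,1]}\|U_{\e,1}(t,\tau)z_\tau\|_{\h_{t,1}}\leq \mathcal{Q}_a(R),
\end{equation*}
which produces the $\mathcal{Q}_a(R)$ term in the target bound. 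So the substantive work is showing exponential decay of $U_{\e,0}$ at the $\h_{t,1}$ level with rate depending only on the higher norm $\|z_\tau\|_{\h_{\tau,1}}$.

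For that, I would imitate the functional approach of Lemma \ref{lemma3}, but lifted by one derivative and adapted to the linear problem. Writing $U_{\e,0}(t,\tau)z_\tau=(v(t),v_t(t))$ and fixing a small $\delta>0$, I would define
\begin{equation*}
\Psi_\e(t):=\rho_\e(t)\|v_t\|_1^2+\|v\|_2^2+\delta\bigl[\,2\rho_\e(t)\langle v_t, Av\rangle+\alpha\|v\|_1^2\bigr].
\end{equation*}
Using \eqref{a1} and the bound $|\rho_\e(t)|\le L$ from \eqref{5.4}, choosing $\delta$ small gives the two-sided equivalence
\begin{equation*}
\tfrac{a}{2}\|(v(t),v_t(t))\|_{\h_{t,1}}^2\leq \Psi_\e(t)\leq 2\|(v(t),v_t(t))\|_{\h_{t,1}}^2,
\end{equation*}
exactly in the spirit of \eqref{c6}. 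Then testing the equation with the multiplier $2Av_t+2\delta A v$ produces an identity of the same shape as \eqref{c7}, but with \emph{no} nonlinear terms $I_1,I_2,I_3$ on the right side. The only term requiring care is the coefficient of $\|v_t\|_1^2$, which comes out as $2\alpha-\rho_\e'(t)-3\delta\rho_\e(t)$; by \eqref{5.4} this is bounded below by $\tfrac{3}{2}\alpha$ for $\delta$ small. The cross term $(2\delta^2\rho_\e+2\delta\rho_\e')\langle v_t, A v\rangle$ is absorbed by $\tfrac{\delta}{4}\|v\|_2^2$ plus a lower-order piece. Altogether one obtains $\tfrac{d}{dt}\Psi_\e+\delta\Psi_\e\le 0$, and Gronwall gives
\begin{equation*}
\sup_{\e\in[a,1]}\|U_{\e,0}(t,\tau)z_\tau\|_{\h_{t,1}}\leq \mathcal{Q}_a(\|z_\tau\|_{\h_{\tau,1}})\,e^{-\kappa(t-\tau)}.
\end{equation*}
Summing the two estimates gives the conclusion of Lemma \ref{lemma5}.

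The main obstacle is of a bookkeeping rather than analytic nature: one must verify that the energy identity for $\Psi_\e$ really closes without any residual term depending on $\|z_\tau\|_{\h_{\tau,1}}$ in the $\mathcal Q_a(R)$-bounded constant, i.e., that the transient and persistent contributions cleanly separate. This is why the splitting must be done so that $v_\e$ carries \emph{no} forcing and \emph{no} nonlinearity; the nonlinear term $f(u_\e)$ is already quarantined inside $w_\e$ and handled by Lemma \ref{lemma4}, which itself relies on the regularity input \eqref{c10} guaranteed by Lemma \ref{lemma3}. The argument is cleaner than in Lemma \ref{lemma3} because no Sobolev embeddings of the form \eqref{c8} or splitting $f=f_0+f_1$ are invoked; the time-dependent speed $\rho_\e(t)$ is the only source of non-autonomy, and it is controlled uniformly in $\e\in[a,1]$ thanks to \eqref{a1} and the bound on $\rho'$.
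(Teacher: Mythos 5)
Your proposal is correct, but it takes a genuinely different route from the paper. The paper does \emph{not} use the decomposition for this lemma: it performs a direct energy estimate on the full nonlinear solution $u=u_\e$ of Eq.~\eqref{5.7}, with the functional $\L_\e(t)=\|u\|^2_2+\rho_\e\|u_t\|^2_1-2\langle g,Au\rangle+\delta[2\rho_\e\langle u_t,Au\rangle+\alpha\|u\|^2_1]$ and the multiplier $2Au_t+2\delta Au$; the nonlinearity is handled by showing $\|f(u)\|^2_1\leq \mathcal{Q}_a(R)$ via the Sobolev embeddings \eqref{c8} and the uniform $\h_{t,1/3}$ bound \eqref{c10} coming from Lemma \ref{lemma3}, which yields $\frac{\d}{\d t}\L_\e+\delta\L_\e\leq\mathcal{Q}_a(R)$ and then Gronwall. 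You instead reuse the second splitting, quote \eqref{c12} for the inhomogeneous piece, and prove afresh an exponential decay for the homogeneous linear flow $U_{\e,0}$ at the $V_2\times V_1$ level --- an estimate the paper never carries out (Lemma \ref{lemma4} only gives decay of $U_{\e,0}$ in $\h_t$). Your route buys a cleaner decay argument with no nonlinear terms and no embeddings, at the price of one extra (standard) linear energy identity; the paper's route buys economy, reusing the template of Lemma \ref{lemma3} verbatim and letting \eqref{c10} absorb the nonlinearity. Both hinge on the same prior input, namely the $\h_{t,1/3}$ control of Lemma \ref{lemma3}, and both cleanly separate the transient $\mathcal{Q}_a(\|z_\tau\|_{\h_{\tau,1}})e^{-\kappa(t-\tau)}$ from the persistent $\mathcal{Q}_a(R)$. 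One small bookkeeping remark on your linear estimate: after absorbing the cross term $(2\delta^2\rho_\e+2\delta\rho'_\e)\langle v_t,Av\rangle$ into $\frac{\delta}{4}\|v\|^2_2$, the leftover piece is of order $\delta\|v_t\|^2$, which must still be absorbed into the coefficient $2\alpha-\rho'_\e-3\delta\rho_\e$ of $\|v_t\|^2_1$ via Poincar\'e before you can conclude $\frac{\d}{\d t}\Psi_\e+\delta\Psi_\e\leq 0$; this works for $\delta$ small but should be said explicitly.
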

\begin{proof} For simplicity, we omit the subscript $\e$ and let $u=u_\e$. For any  $\e\in [a, 1]$ and $\delta\in (0, 1)$, we define the functional along the solution $(u(t), u_t(t))=U_\e(t, \tau)z_\tau$:
\begin{equation*}
\L_\e(t)=\|u\|^2_2+\rho_\e \|u_t\|_1^2-2\langle g, Au\rangle  +\delta\left[ 2\rho_\e\langle u_t, Au\rangle +\alpha \|u\|^2_1\right].
\end{equation*}
It follows from condition \eqref{5.4} that
\begin{align}
&2|\langle g, Au\rangle|\leq 2\|g\|\|u\|_2\leq \frac14\|u\|^2_2 + 4\|g\|^2, \nonumber\\
&2\rho_\e |\langle u_t, Au\rangle|\leq 2\rho_\e \|u_t\|_1 \|u\|_1\leq \alpha \|u\|^2_1+\frac{L}{\alpha} \rho_\e\|u_t\|^2_1. \label{c13}
\end{align}
Inserting \eqref{c13} into  $\L_\e(t)$ receives
\begin{equation}\label{c14}
\frac{a}{2}\|(u(t), u_t(t))\|^2_{\h_{t, 1}}- 4\|g\|^2\leq \L_\e(t)\leq 2\|(u(t), u_t(t))\|^2_{\h_{t, 1}}+ 4\|g\|^2
\end{equation}
for $\delta>0$ suitably small.
\medskip

Taking the multiplier $2A u_t +2\delta Au$ in Eq. \eqref{5.7} gives
\begin{equation}\label{c15}
\begin{split}
&\frac{\d}{\d t} \L_\e(t) +\delta \L_\e(t)+\left(2\alpha-\rho'_\e-3\delta \rho_\e\right)\|u_t\|^2_1+\delta\|u\|^2_2-\delta^2\alpha\|u\|^2_1\\
=\ &\left(2\delta^2 \rho_\e+ 2\delta \rho'_\e\right)\langle u_t, Au\rangle-\langle f(u), 2A u_t +2\delta Au\rangle,\ \ t>\tau.
\end{split}
\end{equation}
Exploiting the Sobolev embedding \eqref{c8}, condition \eqref{5.5} and estimate \eqref{c10}, we have
\begin{equation*}
\begin{split}
\|f(u)\|^2_1&=\|f'(u)A^{\frac12} u\|^2\\
&\leq C\int_\Omega \left(1+|u|^4\right) |A^{\frac12} u|^2\d x\\
&\leq C\left(1+\|u\|^4_{L^{18}(\Omega)}\right)\|A^{\frac12} u\|^2_{L^{\frac{18}{7}}(\Omega)}\\
&\leq C\left(1+\|u\|^4_{4/3} \right)\|u\|^2_{4/3}\leq \mathcal{Q}_a(R),
\end{split}
\end{equation*}
and hence,
\begin{equation*}
-\langle f(u), 2A u_t +2\delta Au\rangle\leq 2\|f(u)\|_1\left[\|u_t\|_1+ \|u\|_1\right]\leq \alpha\|u_t\|^2_1+\frac\delta 4 \|u\|^2_2 +\mathcal{Q}_a(R).
\end{equation*}

Due to
\begin{align*} &
\left(2\alpha-\rho'_\e-3\delta \rho_\e\right)\|u_t\|^2_1+\delta\|u\|^2_2-\delta^2\alpha\|u\|^2_1\geq \frac32 \alpha\|u_t\|^2_1+\frac\delta 2\|u\|^2_2,\\
& \left(2\delta^2 \rho_\e+ 2\delta \rho'_\e\right)\langle u_t, Au\rangle\leq 4\delta L\|u_t\|\|u\|_2\leq \frac\delta 4\|u\|^2_2+\mathcal{Q}_a(R)
\end{align*}
for $\delta>0$ suitably small, inserting above estimates into \eqref{c15}, we have
\begin{equation}\label{c0}
\frac{\d}{\d t}\L_\e(t)+\delta \L_\e(t)\leq \mathcal{Q}_a(R), \ \ t>\tau.
\end{equation}
 Applying the  Gronwall inequality to  \eqref{c0} and exploiting   estimate  \eqref{c14} turn out  the conclusion of Lemma \ref{lemma5}.
\end{proof}

\begin{lemma}\label{lemma6}  Let Assumption \ref{assumption51} be valid,  and  $z_{i\tau}\in \h_{\tau, 1}$ with $\|z_{i\tau}\|_{\h_{\tau, 1}}\leq R, i=1, 2$. Then, for all  $\e\in [a, 1]$, we have
\begin{equation*}
\|U_\e(t, \tau)z_{1\tau}- U_\e(t, \tau)z_{2,\tau}\|_{\h_t}\leq \mathcal{Q}_a(R)e^{-\kappa (t-\tau)}\|z_{1\tau}-z_{2\tau}\|_{\h_\tau}+\mathcal{Q}_a(R)\sup_{s\in [\tau, t]}\|\bar u_\e(s)\|,\ \ t\geq \tau,
\end{equation*}
 where
\begin{equation*}
(\bar u_\e(t), \bar u_{\e t}(t))=(u_{\e 1}(t), u_{\e 1t}(t))- (u_{\e 2}(t), u_{\e 2t}(t))=U_\e(t, \tau)z_{1\tau}- U_\e(t, \tau)z_{2\tau}.
\end{equation*}
\end{lemma}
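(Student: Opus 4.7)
The plan is to adapt the standard energy method for the semilinear damped wave equation, crucially exploiting the higher regularity of the solutions delivered by Lemma~\ref{lemma5}. First, I would subtract the two equations to obtain the evolution for the difference $\bar u_\e = u_{\e 1} - u_{\e 2}$:
\begin{equation*}
\rho_\e(t)\bar u_{\e tt} + \alpha\bar u_{\e t} + A\bar u_\e + f(u_{\e 1}) - f(u_{\e 2}) = 0,
\end{equation*}
with initial data $(\bar u_\e(\tau), \bar u_{\e t}(\tau)) = z_{1\tau} - z_{2\tau}$. I would then introduce the Lyapunov functional
\begin{equation*}
\Psi_\e(t) = \rho_\e(t)\|\bar u_{\e t}\|^2 + \|\bar u_\e\|_1^2 + 2\delta\rho_\e(t)\langle \bar u_{\e t}, \bar u_\e\rangle + \delta\alpha\|\bar u_\e\|^2,
\end{equation*}
with $\delta>0$ small; using $|\rho_\e|\leq L$ and Young's inequality one checks that $\Psi_\e(t)\sim \|(\bar u_\e, \bar u_{\e t})\|_{\mathcal H_t^\e}^2$.

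Testing the difference equation against the multiplier $2\bar u_{\e t} + 2\delta\bar u_\e$ and using the monotonicity $\rho'_\e\leq 0$ from \eqref{5.4}, a direct computation (analogous in structure to those in the proofs of Lemma~\ref{lemma3} and Lemma~\ref{lemma5}) gives
\begin{equation*}
\frac{d}{dt}\Psi_\e + \delta\Psi_\e + \alpha\|\bar u_{\e t}\|^2 + \delta\|\bar u_\e\|_1^2 \leq C\delta^2\|\bar u_\e\|^2 - 2\langle f(u_{\e 1}) - f(u_{\e 2}), \bar u_{\e t} + \delta\bar u_\e\rangle.
\end{equation*}

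The main obstacle, and the crux of the argument, is controlling the nonlinear residue by $\|\bar u_\e\|^2$ alone, since any appearance of $\|\bar u_\e\|_1$ on the right-hand side would spoil the compact perturbation structure needed for the stability estimate. Here I would invoke Lemma~\ref{lemma5}: because $z_{i\tau}\in \h_{\tau,1}$ with $\|z_{i\tau}\|_{\h_{\tau,1}}\leq R$, we obtain $\|u_{\e i}(t)\|_2\leq \mathcal Q_a(R)$ for all $t\geq\tau$ and $\e\in[a,1]$. The three-dimensional Sobolev embedding $V_2\hookrightarrow L^\infty(\Omega)$ then upgrades this to $\|u_{\e i}(t)\|_{L^\infty}\leq \mathcal Q_a(R)$, and combining the mean value theorem with the growth condition \eqref{5.5} yields the pointwise bound $|f(u_{\e 1}) - f(u_{\e 2})|\leq \mathcal Q_a(R)|\bar u_\e|$, so $\|f(u_{\e 1})-f(u_{\e 2})\|\leq \mathcal Q_a(R)\|\bar u_\e\|$. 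Young's inequality then bounds the nonlinear term by $\tfrac{\alpha}{2}\|\bar u_{\e t}\|^2 + \mathcal Q_a(R)\|\bar u_\e\|^2$, which is absorbed by the dissipative term $\alpha\|\bar u_{\e t}\|^2$ on the left, giving
\begin{equation*}
\frac{d}{dt}\Psi_\e + \delta\Psi_\e \leq \mathcal Q_a(R)\|\bar u_\e(t)\|^2.
\end{equation*}

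To conclude, I would apply Gronwall to obtain
\begin{equation*}
\Psi_\e(t) \leq e^{-\delta(t-\tau)}\Psi_\e(\tau) + \delta^{-1}\mathcal Q_a(R)\sup_{s\in[\tau,t]}\|\bar u_\e(s)\|^2,
\end{equation*}
translate back to $\|\cdot\|_{\mathcal H_t}$ via the equivalence \eqref{a1} (noting $\Psi_\e(\tau)\leq C\|z_{1\tau}-z_{2\tau}\|_{\mathcal H_\tau}^2$ since $\|\cdot\|_{\mathcal H_\tau^\e}\leq \|\cdot\|_{\mathcal H_\tau}$), and apply $\sqrt{a+b}\leq \sqrt a+\sqrt b$ to reach the stated inequality with $\kappa=\delta/2$. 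The constants are of the form $\mathcal Q_a(R)$ precisely because the regularity step inherits the $a$-dependence from Lemma~\ref{lemma5}.
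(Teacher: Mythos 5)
Your proposal is correct and follows essentially the same route as the paper: the same difference equation, the same functional $\Psi_\e$ with multiplier $2\bar u_{\e t}+2\delta\bar u_\e$, the same use of Lemma \ref{lemma5} together with $V_2\hookrightarrow L^\infty(\Omega)$ and \eqref{5.5} to bound the nonlinear term by $\mathcal Q_a(R)\|\bar u_\e\|^2$ plus an absorbable multiple of $\|\bar u_{\e t}\|^2$, followed by Gronwall and the norm equivalence \eqref{a1}.
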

\begin{proof}
It follows from Lemma \ref{lemma5} that
\begin{equation}\label{c19}
\sup_{\e\in[a, 1]}\|U_\e(t, \tau)z_{1\tau}\|_{\h_{t, 1}}+\|U_\e(t, \tau)z_{2\tau}\|_{\h_{t, 1}}\leq \mathcal{Q}_a(R), \ \ \forall t\geq \tau.
\end{equation}
For simplicity, we omit the subscript $\e$ and let $u_i=u_{i\e}, i=1,2$. Obviously, the difference $\bar u$ solves
\begin{equation}\label{c16}
\left\{
  \begin{array}{ll}
    \rho_\e (t)\bar u_{tt}+\alpha \bar u_t +A \bar u+ f(u_1)-f(u_2)=0, t> \tau,\\
    (\bar u(\tau), \bar u_t(\tau))= z_{1\tau}-z_{2\tau}.
  \end{array}
\right.
\end{equation}
Taking the  multiplier $2\bar u_t+ 2 \delta \bar u$ in Eq. \eqref{c16} gives
\begin{equation}\label{c17}
\begin{split}
& \frac{\d}{\d t} \Psi_\e(t) +2\delta \|\bar u\|^2_1+\left(2\alpha - \rho'_\e -2\delta \rho_\e\right) \|\bar u_t\|^2\\
=\ & 2\delta \rho'_\e \langle \bar u_t, \bar u\rangle-\langle f(u_1)-f(u_2), 2\bar u_t+ 2 \delta \bar u\rangle,\ \ t> \tau,
\end{split}
\end{equation}
where
\begin{align}\label{c18}&
\Psi_\e(t)=\rho_\e \|\bar u_t\|^2+ \|\bar u\|^2_1+\delta\left[2\rho_\e \langle \bar u_t, \bar u\rangle +\alpha \|\bar u\|^2\right]\sim \|(\bar u, \bar u_t)\|^2_{\h_t},\\
&\left(2\alpha - \rho'_\e -2\delta \rho_\e\right) \|\bar u_t\|^2\geq \alpha\|\bar u_t\|^2,\nonumber\\
&2\delta \rho'_\e \langle \bar u_t, \bar u\rangle\leq \frac\alpha4\|\bar u_t\|^2+\mathcal{Q}_a(R)\|\bar u\|^2\nonumber
\end{align}
for $\delta>0$ suitably small, and where we have used condition \eqref{5.4} and
formula \eqref{a1}.
By the Sobolev embedding $V_2 \hookrightarrow L^\infty(\Omega)$,  condition \eqref{5.5} and estimate \eqref{c19}, we have
\begin{equation*}
\begin{split}
& -\langle f(u_1)-f(u_2), 2\bar u_t+ 2 \delta \bar u\rangle\\
\leq \ & C\int_\Omega\left(1+ |u_1|^3+ |u_2|^3\right)|\bar u| \left[|\bar u_t|+|\bar u|\right]\d x\\
\leq \ & C\left( 1+ \|u_1\|^2_{L^\infty(\Omega)}+ \|u_2\|^2_{L^\infty(\Omega)}\right) \left[\|\bar u\|^2+ \|\bar u\|\|\bar u_t\|\right]\\
\leq \ & \mathcal{Q}_a(R)\|\bar u\|^2+\frac\alpha 4\|\bar u_t\|^2.
\end{split}
\end{equation*}
Inserting above estimates into \eqref{c17} and making use of \eqref{c18} yield
\begin{equation}\label{c20}
 \frac{\d}{\d t} \Psi_\e(t) +\delta \Psi_\e(t)\leq \mathcal{Q}_a(R)\|\bar u\|^2, \ \ t>\tau.
\end{equation}
Applying the Gronwall inequality to \eqref{c20} and making use of  formula \eqref{c18} receive
\begin{equation*}
\|(\bar u(t), \bar u_t(t))\|_{\h_t}\leq \mathcal{Q}_a(R)e^{-\kappa (t-\tau)}\|z_{1\tau}-z_{2\tau}\|_{\h_\tau}+\mathcal{Q}_a(R)\sup_{s\in [\tau, t]}\|\bar u(s)\|,\ \ t\geq\tau.
\end{equation*}
This completes the proof.
\end{proof}

\begin{lemma}\label{lemma7}  Let Assumption \ref{assumption51} be valid,  and $z_\tau\in \h_{\tau, 1}$ with $\|z_\tau\|_{\h_{\tau, 1}}\leq R$. Then for any  $\e_1, \e_2\in [a, 1]$,
\begin{equation}\label{c25}
\|U_{\e_1}(t, \tau)z_\tau- U_{\e_2}(t, \tau)z_\tau\|_{\h_t}\leq \mathcal{Q}_a(R)e^{\mathcal{Q}_a(R) (t-\tau)}|\e_1-\e_2|, \ \ \forall t\geq \tau.
\end{equation}
\end{lemma}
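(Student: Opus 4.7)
The plan is to subtract the two copies of equation \eqref{5.7} satisfied by $u_i := u_{\e_i}$ and run an energy estimate on $\bar u := u_1 - u_2$ in the spirit of Lemma \ref{lemma6}, the new feature being a forcing term that measures the mismatch between the two propagation speeds. Using the splitting $\rho_{\e_1}(t)u_{1tt} - \rho_{\e_2}(t)u_{2tt} = \rho_{\e_1}(t)\bar u_{tt} + (\e_1-\e_2)\rho(t)u_{2tt}$, subtraction yields the Cauchy problem
\begin{equation*}
\rho_{\e_1}(t)\bar u_{tt} + \alpha\bar u_t + A\bar u + f(u_1) - f(u_2) = -(\e_1-\e_2)\rho(t)u_{2tt}, \quad (\bar u,\bar u_t)(\tau) = 0.
\end{equation*}

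I would test this equation with $2\bar u_t$ and work with the functional $E_\e(t) := \rho_{\e_1}(t)\|\bar u_t\|^2 + \|\bar u\|_1^2$, which coincides with $\|(\bar u,\bar u_t)\|^2_{\h_t^{\e_1}}$. The nonlinear term $\langle f(u_1)-f(u_2),\bar u_t\rangle$ should be treated exactly as in Lemma \ref{lemma6} via the embedding $V_2 \hookrightarrow L^\infty(\Omega)$ and the uniform $\h_{t,1}$-bound from Lemma \ref{lemma5}, producing $\frac{\alpha}{2}\|\bar u_t\|^2 + \mathcal{Q}_a(R)\|\bar u\|^2$. The genuinely new difficulty is that $\|\rho(t)u_{2tt}\|$ need not be uniformly bounded in $t$, since $\rho(t)$ may vanish as $t\to+\infty$. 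I plan to bypass this by substituting $u_{2tt}$ via the equation itself, $\rho(t)u_{2tt} = \e_2^{-1}[g - Au_2 - f(u_2) - \alpha u_{2t}]$, and working with a time-integrated energy inequality instead of a pointwise one. The pointwise bounds $\|Au_2\|,\|f(u_2)\| \leq \mathcal{Q}_a(R)$ (from Lemma \ref{lemma5} together with $V_1 \hookrightarrow L^6(\Omega)$), combined with the time integrability $\int_\tau^\infty \|u_{2s}\|^2\d s \leq \mathcal{Q}_a(R)$ provided by formula \eqref{a2} of Lemma \ref{lemma1}, should yield
\begin{equation*}
\int_\tau^t \|\rho(s)u_{2ss}\|^2\d s \leq \mathcal{Q}_a(R)\bigl[1+(t-\tau)\bigr].
\end{equation*}

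Assembling these estimates and using the Poincar\'e inequality $\|\bar u\|^2 \leq CE_\e$ leads to $E_\e(t) \leq \mathcal{Q}_a(R)\int_\tau^t E_\e(s)\d s + \mathcal{Q}_a(R)|\e_1-\e_2|^2[1+(t-\tau)]$. Gronwall's lemma with $E_\e(\tau) = 0$, absorption of the polynomial prefactor into the exponential, extraction of square roots, and the norm equivalence \eqref{a1} together deliver the desired bound \eqref{c25}. The main obstacle, and what distinguishes this lemma from Lemma \ref{lemma6}, is precisely the absence of a pointwise $L^2$-control on $\rho(t)u_{2tt}$: overcoming it requires combining the higher regularity of the solution (Lemma \ref{lemma5}) with the $L^2_t$-control on $u_{2t}$ (formula \eqref{a2}), so that the Gronwall argument must be performed in integrated rather than differential form.
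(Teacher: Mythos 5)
Your proposal is correct and follows the same skeleton as the paper's proof: subtract the two copies of Eq.~\eqref{5.7}, isolate the speed-mismatch term $(\e_1-\e_2)\rho(t)u_{itt}$, test with $2\bar u_t$, eliminate the second time derivative by means of the equation itself, and close with Gronwall (the paper writes the difference equation as $\rho_{\e_2}\bar u_{tt}+(\rho_{\e_1}-\rho_{\e_2})u_{1tt}+\cdots=0$, i.e.\ with the roles of $\e_1,\e_2$ swapped relative to yours, which is immaterial). The one genuine divergence is the treatment of the mismatch term. The paper asserts the pointwise bound $\rho(t)\|u_{1tt}\|\le \frac1a\left[\alpha\|u_{1t}\|+\|Au_1\|+\|f(u_1)\|+\|g\|\right]\le\mathcal Q_a(R)$ (formula \eqref{c24}) and absorbs $2|\e_1-\e_2|\,\rho(t)\|u_{1tt}\|\,\|\bar u_t\|$ into $\frac{\alpha}{2}\|\bar u_t\|^2+\mathcal Q_a(R)|\e_1-\e_2|^2$ in a differential inequality; you instead keep the $\alpha\|u_{2t}\|$ contribution in integrated form and control it by $\int_\tau^\infty\|u_{\e t}\|^2\,\d t\le\mathcal Q_a(R)$ (formula \eqref{a2}), paying only a polynomial factor $1+(t-\tau)$ that the exponential swallows. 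Your variant is the more robust of the two: the pointwise step \eqref{c24} implicitly requires $\|u_{1t}(t)\|\le\mathcal Q_a(R)$ uniformly in $t$, whereas the energy bounds of Lemma \ref{lemma1} and Lemma \ref{lemma5} only control $\rho(t)\|u_{1t}\|^2$ and $\rho(t)\|u_{1t}\|_1^2$, quantities that degenerate as $\rho(t)\to0^+$; the time-integrated bound \eqref{a2} is precisely the uniform-in-$t$ information that is actually available. Your proof is therefore complete as sketched; the only points to spell out when writing it up are the elementary absorption $\bigl[1+(t-\tau)\bigr]e^{\mathcal Q_a(R)(t-\tau)}\le Ce^{2\mathcal Q_a(R)(t-\tau)}$ before taking square roots, and the final passage from the $\e_1$-weighted energy to the $\h_t$-norm via \eqref{a1}.
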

\begin{proof}  It follows from Lemma \ref{lemma5} that
\begin{equation}\label{c23}
\|U_{\e_1}(t, \tau)z_\tau\|_{\h_{t, 1}}+\|U_{\e_2}(t, \tau)z_\tau\|_{\h_{t, 1}}\leq \mathcal{Q}_a(R), \ \ \forall t\geq \tau.
\end{equation}
Let
\begin{equation*}
(u_i(t), u_{it}(t))= U_{\e_i}(t, \tau)z_\tau, \ \ t\geq \tau, \ i=1, 2.
\end{equation*}
Obviously,  the difference $\bar u= u_1-u_2$ solves
\begin{equation}\label{c21}
 \left\{
   \begin{array}{ll}
     \rho_{\e_2} \bar u_{tt}+ \left( \rho_{\e_1}- \rho_{\e_2}\right)u_{1tt}+\alpha \bar u_t + A\bar u+ f(u_1)-f(u_2)=0,\ \ t> \tau, \\
     \left(\bar u(\tau), \bar u_t(\tau)\right)=0.
   \end{array}
 \right.
\end{equation}
Taking the multiplier $2 \bar u_t$ in  Eq. \eqref{c21}   gives
\begin{equation}\label{c22}
\frac{\d}{\d t}\|(\bar u, \bar u_t)\|^2_{\h^{\e_2}_t}+\left(2\alpha- \rho'_{\e_2}\right)\|\bar u_t\|^2=- \left( \rho_{\e_1}- \rho_{\e_2}\right)\langle u_{1tt}, 2 \bar u_t\rangle
- \langle f(u_1)-f(u_2), 2 \bar u_t\rangle.
\end{equation}
Taking into account $\e_1, \e_2\in [a, 1]$,
by Eq. \eqref{5.7} and formula \eqref{c23}  we have
\begin{equation}\label{c24}
\rho(t)\|u_{1tt}\|\leq \frac{1}{a}\left[\alpha\|u_{1t}\|+\|Au_1\|+\|f(u_1)\|+\|g\|\right]\leq \mathcal{Q}_a(R), \ \ \forall t\geq \tau,
\end{equation}
where we have used the fact
$
\|f(u_1)\|\leq \mathcal{Q}_a(R)$ for $V_2\hookrightarrow L^\infty(\Omega)$.
It follows from estimates \eqref{c23} and \eqref{c24} that
\begin{align*}
& - \left( \rho_{\e_1}- \rho_{\e_2}\right)\langle u_{1tt}, 2 \bar u_t\rangle
\leq 2|\e_1-\e_2| \rho(t)\|u_{1tt}\|\|\bar u_t\|\leq \frac{\alpha}{2}\|\bar u_t\|^2+ \mathcal{Q}_a(R)|\e_1-\e_2|^2,\\
& - \langle f(u_1)-f(u_2), 2 \bar u_t\rangle
\leq \mathcal{Q}_a(R)
\|\bar u\|\|\bar u_t\|\leq \frac{\alpha}{2}\|\bar u_t\|^2+\mathcal{Q}_a(R)\|\bar u\|^2_1.
\end{align*}
Inserting above estimates into formula \eqref{c22} and making use of the fact $\rho'_\e<0$, we obtain
\begin{equation*}
\frac{\d}{\d t}\|(\bar u, \bar u_t)\|^2_{\h^{\e_2}_t}\leq \mathcal{Q}_a(R)\|(\bar u, \bar u_t)\|^2_{\h^{\e_2}_t}+\mathcal{Q}_a(R)|\e_1-\e_2|^2,\ \ t>\tau.
\end{equation*}
Applying the    Gronwall inequality  and estimate \eqref{a1} give   formula \eqref{c25}.
\end{proof}

\subsection{Proof of the main results}

Based on the technical preparation in  previous subsection, we prove Theorem \ref{52}  and Theorem \ref{54} by applying the abstract criteria obtained in Section 3 and Section 4, respectively. These arguments  are  challenging because of the hyperbolicity of model \eqref{5.7}.  The method developed here allows to  overcome this difficulty. We first  construct a desired family $\mathcal{B}$ belonging to  universe $\mathscr D$.

\begin{lemma}\label{lemma8} Let Assumption \ref{assumption51} be valid, and the interval $[a, 1]\subset (0, 1]$. Then   there exists a family $\mathcal{B}=\{B(t)\}_{t\in\mathbb R}\in \mathscr D$  possessing the following properties:
\begin{description}
  \item (i)  each section $B(t)$ is closed in $\h_t$ and
        \begin{equation}\label{d1}
        B(t)\subset \mathbb B_t(\mathcal R_0)\cap \mathbb B^1_t(\mathcal R), \ \ \forall t\in \mathbb{R}
         \end{equation}
       for some constants $\mathcal R=\mathcal R(a)>0$  and $\mathcal R_0>R_1$, where $R_1$ is as shown in Remark \ref{remark54}, $\mathbb B^1_t(\mathcal R)$ is the $\mathcal R$-ball in $\h_{t, 1}$  centered at  $0$;
  \item (ii)  there exist positive constants $\kappa_1$ and $\tau_1$ such that
   \begin{equation}\label{d2}
  \sup_{\e\in [a, 1]} \mathrm{dist}_{\h_t}\left(U_\e(t, \tau)\mathbb B_\tau(R_1), B(t)\right)\leq \mathcal Q_a(R_1)e^{-\kappa_1(t-\tau)}, \ \ \forall t\geq \tau+\tau_1,
   \end{equation}
   where the family  $\{\mathbb B_t(R_1)\}_{t\in\mathbb R}\in \mathscr D$ is as shown in Remark \ref{remark54};
   \item (iii) there exists a positive constant $T_1$ such that
    \begin{equation}\label{a5.3}
      \bigcup_{\e\in [a, 1]}U_\e(t, \tau)B(\tau)\subset B(t), \ \ \forall t\geq \tau+T_1.
      \end{equation}
\end{description}
\end{lemma}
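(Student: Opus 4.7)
The plan is to take $B(t)$ as an intersection of three closed balls,
$$B(t) := \mathbb{B}_t(\mathcal{R}_0) \cap \mathbb{B}_t^{1/3}(\mathcal{R}_{1/3}) \cap \mathbb{B}_t^1(\mathcal{R}),$$
where $\mathbb{B}_t^{1/3}(\mathcal{R}_{1/3})$ denotes the closed $\mathcal{R}_{1/3}$-ball in $\h_{t, 1/3}$, with radii chosen so that property (iii) follows by direct bootstrapping through Lemmas \ref{lemma1}, \ref{lemma3}, \ref{lemma5}, while property (ii) is delivered by a double asymptotic-smoothness argument combining Lemmas \ref{lemma2} and \ref{lemma4}. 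I would fix the constants via the cascade $\mathcal{R}_0 := \max\{R_a + 1, R_1, C\mathcal{R}\} + 1$, $\mathcal{R}_{1/3} := \mathcal{Q}_a(\mathcal{R}_0) + \mathcal{Q}_a(R_1) + C\mathcal{R} + 1$, $\mathcal{R} := \mathcal{Q}_a(\mathcal{R}_{1/3}) + 1$, where $C$ is the Sobolev embedding constant of $\h_{t, 1} \hookrightarrow \h_{t, 1/3} \hookrightarrow \h_t$. The enlargements in $\mathcal{R}_0$ and $\mathcal{R}_{1/3}$ break any circular fixed-point obstruction. Then I pick $T_1$ large enough that every decaying term of the form $\mathcal{Q}_a(\cdot)e^{-\kappa T_1}$ appearing in Lemmas \ref{lemma1}, \ref{lemma3}, \ref{lemma5} is bounded by $1$.

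Property (i) is immediate: each section is closed as an intersection of closed balls, and $\sup_t \|B(t)\|_{\h_t} \leq \mathcal{R}_0 < \infty$ gives $\{B(t)\} \in \mathscr{D}$. For property (iii), fix $z \in B(\tau)$; Lemma \ref{lemma1} with $\|z\|_{\h_\tau} \leq \mathcal{R}_0$ yields $\|U_\e(t,\tau)z\|_{\h_t} \leq \mathcal{Q}_a(\mathcal{R}_0)e^{-\kappa(t-\tau)} + R_a \leq \mathcal{R}_0$; Lemma \ref{lemma3} with $\|z\|_{\h_{\tau, 1/3}} \leq \mathcal{R}_{1/3}$, $\|z\|_{\h_\tau} \leq \mathcal{R}_0$ yields $\|U_\e(t,\tau)z\|_{\h_{t, 1/3}} \leq \mathcal{Q}_a(\mathcal{R}_{1/3} + \mathcal{R}_0)e^{-\kappa(t-\tau)} + \mathcal{Q}_a(\mathcal{R}_0) \leq \mathcal{R}_{1/3}$; Lemma \ref{lemma5} with $\|z\|_{\h_{\tau, 1}} \leq \mathcal{R}$ and $\|z\|_{\h_{\tau, 1/3}} \leq \mathcal{R}_{1/3}$ yields $\|U_\e(t,\tau)z\|_{\h_{t, 1}} \leq \mathcal{Q}_a(\mathcal{R})e^{-\kappa(t-\tau)} + \mathcal{Q}_a(\mathcal{R}_{1/3}) \leq \mathcal{R}$. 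The crucial structural observation is that Lemma \ref{lemma5}'s non-decaying term depends only on $\mathcal{R}_{1/3}$, not on $\mathcal{R}$, so the chain closes without a circular fixed-point requirement on $\mathcal{R}$.

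Property (ii) is the principal technical obstacle and will be proved by a two-step asymptotic smoothness. Given $z_\tau \in \mathbb{B}_\tau(R_1)$, apply Lemma \ref{lemma2} on $[\tau, s]$ with $s := \tau + T^*$: split $U_\e(s, \tau)z_\tau = \tilde U_\e(s, \tau)z_\tau + y$ where $y := U_{\e, 1}(s, \tau)z_\tau$ satisfies $\|y\|_{\h_{s, 1/3}} \leq \mathcal{Q}_a(R_1)$ and $\|\tilde U_\e(s, \tau)z_\tau\|_{\h_s} \leq \mathcal{Q}_a(R_1)e^{-\kappa T^*}$. Then apply Lemma \ref{lemma4} on $[s, t]$ to $y \in \h_{s, 1/3}$: split $U_\e(t, s)y = \tilde P(t) + r(t)$ with $\|r(t)\|_{\h_{t, 1}} \leq \mathcal{Q}_a(\mathcal{Q}_a(R_1)) \leq \mathcal{R}$ and $\|\tilde P(t)\|_{\h_t} \leq \mathcal{Q}_a(\mathcal{Q}_a(R_1))e^{-\kappa(t-s)}$. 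The passage $U_\e(t, \tau)z_\tau = U_\e(t, s)[y + \tilde U_\e(s, \tau)z_\tau]$ incurs a Lipschitz defect, controlled via \eqref{4.10}, of order $\mathcal{Q}_a(\mathcal{R}_0)e^{\mathcal{Q}_a(\mathcal{R}_0)(t-s)}\mathcal{Q}_a(R_1)e^{-\kappa T^*}$; selecting $T^* = \theta(t - \tau)$ with $\theta > \mathcal{Q}_a(\mathcal{R}_0)/(\mathcal{Q}_a(\mathcal{R}_0) + \kappa)$ makes the combined exponent negative and yields
$$\|U_\e(t, \tau)z_\tau - r(t)\|_{\h_t} \leq \mathcal{Q}_a(R_1)e^{-\kappa_1(t - \tau)}, \qquad t - \tau \geq \tau_1,$$
for some $\kappa_1 \in (0, \kappa)$. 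The enlargements of $\mathcal{R}_0$ and $\mathcal{R}_{1/3}$ ensure that $r(t)$, being $\mathcal{R}$-bounded in $\h_{t, 1}$, automatically satisfies the remaining two ball conditions of $B(t)$ through the Sobolev embeddings, so $r(t) \in B(t)$ and (ii) is established.

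The main difficulty is the time-scale balancing in the double smoothing: $T^*$ must scale linearly with $t - \tau$ rather than being a fixed constant, in order to defeat the positive exponential factor $e^{\mathcal{Q}_a(\mathcal{R}_0)(t-s)}$ from \eqref{4.10}, and this is precisely what forces the reduced attraction rate $\kappa_1 < \kappa$ in property (ii). The secondary subtlety — that the three radii of $B(t)$ must close up under the successive action of Lemmas \ref{lemma1}, \ref{lemma3}, \ref{lemma5} without creating any circular fixed-point condition on $\mathcal{R}$ — is resolved by the observation that Lemma \ref{lemma5}'s floor is governed by the weaker $\h_{\tau, 1/3}$-norm of the data rather than the $\h_{\tau, 1}$-norm, which breaks the would-be loop and makes the cascade in the choice of $(\mathcal{R}_0, \mathcal{R}_{1/3}, \mathcal{R})$ possible.
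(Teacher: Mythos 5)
Your construction is the paper's construction: the same triple intersection $B(t)=\mathbb B_t(\cdot)\cap\mathbb B_t^{1/3}(\cdot)\cap\mathbb B_t^{1}(\cdot)$, the same forward bootstrapping through Lemmas \ref{lemma1}, \ref{lemma3}, \ref{lemma5} for (iii), and the same two-step smoothing (Lemma \ref{lemma2} followed by Lemma \ref{lemma4}) glued at an intermediate time proportional to $t-\tau$ so that the decay $e^{-\kappa\theta(t-\tau)}$ beats the Lipschitz growth $e^{\mathcal Q_a(\cdot)(1-\theta)(t-\tau)}$ from \eqref{4.10}; the paper's choice $t_1=(1-\theta)t+\theta\tau$ with $\theta=\kappa/(\mathcal Q_a(R_1)+2\kappa)$ is your $T^*=\theta(t-\tau)$ in different clothing, and your observation that the non-decaying term in Lemma \ref{lemma5} depends only on the $\h_{\tau,1/3}$-norm is exactly what makes the paper's cascade close.

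There is, however, one concrete defect: your radii are defined circularly. You set $\mathcal R_0$ and $\mathcal R_{1/3}$ in terms of $C\mathcal R$ while $\mathcal R:=\mathcal Q_a(\mathcal R_{1/3})+1$, so the three definitions require a solution of $\mathcal R\geq \mathcal Q_a\bigl(\mathcal Q_a(C\mathcal R+\cdots)+C\mathcal R+\cdots\bigr)+1$. Since $\mathcal Q_a$ is only known to be increasing (and in this model grows superlinearly), such an $\mathcal R$ need not exist; adding $+1$ does not break this loop. The loop is also unnecessary: what must sit inside the outer two balls of $B(t)$ is not the embedded image of the full $\h_{t,1}$-ball of radius $\mathcal R$ but only the embedded image of the smoothing target $r(t)$, whose $\h_{t,1}$-bound $\mathcal R_2:=\mathcal Q_a(\mathcal Q_a(R_1))$ depends solely on $R_1$ and $a$. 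Defining the radii in the forward order $\mathcal R_2\rightarrow\mathcal R_0\rightarrow\mathcal R_{1/3}\rightarrow\mathcal R$ (as the paper does with its $\mathcal R_2,\mathcal R_3,\mathcal R_5,\mathcal R_7$) removes the circularity and the rest of your argument goes through unchanged.
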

\begin{proof} For any  $z_\tau\in \mathbb B_\tau(R_1)$,  it follows from Lemma \ref{lemma2} that
\begin{align*}
& \sup_{\e\in [a, 1]}\|U_{\e,0}(t, \tau)z_\tau\|^2_{\h_t}\leq \mathcal Q_a(R_1)e^{-\kappa (t-\tau)}\ \ \hbox{and}\ \   \sup_{\e\in [a, 1]}\|U_{\e, 1}(t, \tau)z_\tau\|^2_{\h_{t,1/3}}\leq \mathcal Q_a(R_1),\ \ \forall t\geq \tau,
\end{align*}
  which imply that there exists a positive constant $\mathcal R_1=\mathcal R_1(R_1)$  such that
 \begin{equation}\label{a5.4}
  \sup_{\e\in [a, 1]} \mathrm{dist}_{\h_t}\left(U_\e(t, \tau)\mathbb B_\tau(R_1),\  \mathbb B^{1/3}_t(\mathcal R_1)\right)\leq \mathcal Q_a(R_1)e^{-\kappa(t-\tau)}, \ \ \forall t\geq \tau.
   \end{equation}
Similarly, for any $z_\tau\in \mathbb B^{1/3}_\tau(\mathcal R_1)$, we infer from Lemma \ref{lemma4} that
\begin{align*}
&\sup_{\e\in [a, 1]}\|U_{\e,0}(t, \tau)z_\tau\|^2_{\h_t}\leq \mathcal Q_a(\mathcal R_1) e^{-\kappa(t-\tau)}\ \ \hbox{and}\ \  \sup_{\e\in [a, 1]}\|U_{\e,1}(t, \tau)z_\tau\|^2_{\h_{t,1}}\leq \mathcal Q_a(\mathcal R_1),\ \ t\geq \tau,
\end{align*}
  which imply that there exists a positive constant $\mathcal R_2=\mathcal R_2(R_1)$  such that
\begin{equation}\label{a5.5}
  \sup_{\e\in [a, 1]} \mathrm{dist}_{\h_t}\left(U_\e(t, \tau)\mathbb B^{1/3}_\tau(\mathcal R_1),\  \mathbb B^1_t(\mathcal R_2)\right)\leq \mathcal Q_a(R_1)e^{-\kappa(t-\tau)}, \ \ \forall t\geq \tau.
   \end{equation}
Remark \ref{remark54} shows that the family  $\{\mathbb B_t(R_1)\}_{t\in\mathbb R}\in \mathscr D$  is a uniformly (w.r.t. $\e\in [a, 1]$) pullback $\mathscr D$-absorbing family of the processes $U_\e(t, \tau), \e\in [a,1]$, that is,
 there exists a positive constant $e(R_1)$ such that
\begin{equation}\label{a5.6}
\bigcup_{\e\in [a, 1]}U_\e(t, \tau)\mathbb B_\tau (R_1)\subset \mathbb B_t(R_1), \ \ \forall t\geq \tau+e(R_1).
\end{equation}

 Let $\theta=\frac{\kappa}{\mathcal Q_a(R_1)+2\kappa}$. Obviously,
\begin{equation*}
\theta\in (0, 1)\ \ \hbox{and}\ \ -\kappa \theta=-\kappa+ \left(\mathcal Q_a(R_1)+\kappa\right)\theta.
\end{equation*}
We infer from formula \eqref{a5.6} that there exists a constant $e_1=\frac{e(R_1)}{1-\theta}>0$ such that
\begin{equation}\label{a5.7}
\bigcup_{\e\in [a, 1]}U_\e\left((1-\theta)t+\theta\tau, \tau\right)\mathbb B_\tau (R_1)\subset \mathbb B_{(1-\theta)t+\theta\tau}(R_1), \ \ \forall t\geq \tau+e_1.
\end{equation}
For any $t\geq \tau$, let $t_1=(1-\theta)t+\theta\tau$.  It follows from Lemma \ref{lemma6} and formulas \eqref{a5.4}-\eqref{a5.7} that
\begin{align}
& \sup_{\e\in [a, 1]}\mathrm{dist}_{\h_t}\left(U_\e(t, \tau)\mathbb B_\tau (R_1), \mathbb B_t^1(\mathcal R_2)\right)\nonumber\\
\leq \ & \sup_{\e\in [a, 1]}\mathrm{dist}_{\h_t}\left(U_\e\left(t, t_1\right)U_\e\left(t_1, \tau\right)\mathbb B_\tau (R_1),  U_\e\left(t, t_1\right)\mathbb B_{t_1}^{1/3}(\mathcal R_1)\right)\nonumber\\
&+  \sup_{\e\in [a, 1]}\mathrm{dist}_{\h_t}\left(U_\e\left(t, t_1\right)\mathbb B_{t_1}^{1/3}(\mathcal R_1),  \mathbb B_t^1(\mathcal R_2) \right)\label{a5.8}\\
\leq\ &\mathcal Q_a(R_1) \exp\{\mathcal Q_a(R_1)(t-t_1)\} \sup_{\e\in [a, 1]}\mathrm{dist}_{\h_{t_1}}\left(U_\e\left(t_1, \tau\right)\mathbb B_\tau (R_1),  \mathbb B_{t_1}^{1/3}(\mathcal R_1)\right)\nonumber\\
&+ \mathcal Q_a(R_1)e^{-\kappa  (t-t_1)}\nonumber\\
\leq \ & \mathcal Q_a(R_1)\exp\{\left[-\kappa+ \left(\mathcal Q_a(R_1)+ \kappa\right)\theta \right](t-\tau)\}+ \mathcal Q_a(R_1)e^{-\kappa \theta (t-\tau)}\nonumber\\
\leq \ & \mathcal Q_a(R_1)e^{-\kappa \theta (t-\tau)},\ \ \forall  t\geq \tau+e_1.\nonumber
\end{align}

For every $\xi\in \mathbb B_t^1(\mathcal R_2)$,
\begin{equation*}
\|\xi\|_{\h_t}\leq \lambda_1^{-1/2} \|\xi\|_{\h_{t,1}}\leq \lambda_1^{-1/2} \mathcal R_2,\ \ \forall t\in\mathbb R,
\end{equation*}
which implies
\begin{equation}\label{a5.9}
\mathbb B_t^1\left(\mathcal R_2\right)\subset \mathbb B_t\left(\lambda_1^{-1/2}\mathcal R_2\right) \subset \mathbb B_t(\mathcal R_3) \ \ \hbox{and}\ \ \mathbb B_t(R_1) \subset \mathbb B_t(\mathcal R_3),\ \ \forall  t\in\mathbb R,
\end{equation}
 with $\mathcal R_3=R_1+ \lambda_1^{-1/2}\mathcal R_2(R_1)=\mathcal R_3(R_1)$. By Remark  \ref{remark54} and formula \eqref{a5.9}, there exists a constant $e_2=e_2(R_1)>0$ such that
\begin{equation}\label{a5.10}
 \bigcup_{\e\in [a, 1]} U_\e(t, \tau)\mathbb B_\tau(\mathcal R_3)\subset\mathbb B_t(R_1) \subset \mathbb B_t(\mathcal R_3), \ \ \forall  t\geq \tau+e_2.
\end{equation}
It follows from Lemma \ref{lemma3} that for any  $z_\tau\in \mathbb B_\tau(\mathcal R_3)\cap \h_{\tau, 1/3}$,
\begin{equation}\label{a5.11}
\sup_{\e\in [a, 1]}\|U_\e(t, \tau)z_\tau\|^2_{\h_{t, 1/3}}\leq \mathcal Q_a\left(\mathcal R_3+\|z_\tau\|_{\h_{\tau, 1/3}}\right)e^{-\kappa(t-\tau)}+\mathcal R_4, \ \ \forall t\geq \tau,
\end{equation}
with the positive constant $\mathcal R_4=Q_a(\mathcal R_3(R_1))=\mathcal R_4(R_1)$.

Similarly, for every  $\xi\in \mathbb B_t^1(\mathcal R_2)$, we have
\begin{equation*}
\|\xi\|_{\h_{t, 1/3}}\leq \lambda_1^{-1/3} \|\xi\|_{\h_{t,1}}\leq \lambda_1^{-1/3}\mathcal R_2, \ \ \forall t\in\mathbb R,
\end{equation*}
which means
\begin{equation}\label{a5.12}
\mathbb B_t^1\left(\mathcal R_2\right)\subset \mathbb B^{1/3}_t\left(\lambda_1^{-1/3} \mathcal R_2\right) \subset \mathbb B^{1/3}_t(\mathcal R_5),\ \ \forall t\in\mathbb R,
\end{equation}
where $\mathcal R_5=\mathcal R_4+ \lambda_1^{-1/3} \mathcal R_2=\mathcal R_5(R_1)$. It follows from  formula \eqref{a5.11} that there exists a positive constant $e_3=e_3(R_1)$ such that
\begin{equation}\label{a5.13}
 \bigcup_{\e\in [a, 1]}U_\e(t, \tau)\left[\mathbb B_\tau\left(\mathcal R_3\right) \cap \mathbb B^{1/3}_\tau(\mathcal R_5) \right]\subset \mathbb B^{1/3}_t(\mathcal R_5),  \ \ \forall  t\geq \tau+e_3.
\end{equation}

Lemma  \ref{lemma5} shows that for any $z_\tau\in
\mathbb B^{1/3}_\tau(\mathcal R_5)\cap \h_{\tau,1}$,
\begin{equation}\label{a5.14}
\sup_{\e\in [a, 1]}\|U_\e(t, \tau)z_\tau\|^2_{\h_{t,1}}\leq  \mathcal Q_a\left(\mathcal R_5+\|z_\tau\|_{\h_{\tau,1}}\right)e^{-\kappa(t-\tau)}+\mathcal R_6, \ \ \forall t\geq \tau,
\end{equation}
where the  positive constant $\mathcal R_6=\mathcal Q_a(\mathcal R_5)=\mathcal R_6(R_1)$. Obviously,
\begin{equation}\label{a5.15}
\mathbb B^1_t(\mathcal R_2)\subset \mathbb B^1_t(\mathcal R_7)\ \ \hbox{with}\ \ \mathcal R_7=\mathcal R_2+ \mathcal R_6=\mathcal R_7(R_1), \ \ \forall t\in\mathbb R.
\end{equation}
Formula \eqref{a5.14} implies that there exists a positive constant $e_4=e_4(R_1)$ such that
\begin{equation}\label{a5.16}
\bigcup_{\e\in [a, 1]}U_\e(t, \tau)\left[\mathbb B^{1/3}_\tau\left(\mathcal R_5\right) \cap \mathbb B^1_\tau(\mathcal R_7) \right]\subset \mathbb B^1_t(\mathcal R_7),\ \ \forall  t\geq \tau+e_4.
\end{equation}

Let
\begin{equation*}
B(t)=\mathbb B_t\left(\mathcal R_3\right) \cap \mathbb B^{1/3}_t(\mathcal R_5) \cap \mathbb B^1_t(\mathcal R_7), \ \ \forall t\in\mathbb R.
\end{equation*}
We show that $\{B(t)\}_{t\in\mathbb R}\in \mathscr D$ is the desired family.\medskip

(i)\ Obviously, for every $t\in\mathbb R$, $B(t)$ is closed in $\h_t$ and
  \begin{equation*}
B(t)\subset \mathbb B_t\left(\mathcal R_3\right)  \cap \mathbb B^1_t(\mathcal R_7),\ \ \forall t\in \mathbb{R},
\end{equation*}
that is, formula \eqref{d1} holds, with $\mathcal R_0= \mathcal R_3> R_1$ and $\mathcal R=\mathcal R_7=\mathcal R(a)$.

(ii)\ It follows from formulas \eqref{a5.9}, \eqref{a5.12} and \eqref{a5.15} that
$\mathbb B^1_t(\mathcal R_2)\subset B(t)$  for all $t\in\mathbb R$. Then  we infer from estimate \eqref{a5.8} that
\begin{equation*}
\begin{split}
\sup_{\e\in [a, 1]}\mathrm{dist}_{\h_t}\left(U_\e(t, \tau)\mathbb B_\tau (R_1), B(t)\right)
&\leq \sup_{\e\in [a, 1]}\mathrm{dist}_{\h_t}\left(U_\e(t, \tau)\mathbb B_\tau (R_1), \mathbb B^1_t(\mathcal R_2)\right)\\
&\leq \mathcal Q_a(R_1) e^{-\kappa\theta (t-\tau)}, \ \ \forall t\geq \tau +e_1,
\end{split}
\end{equation*}
that is, formula \eqref{d2} holds, with   $\kappa_1=\kappa \theta$ and $\tau_1=e_1$.

(iii)\ Taking $T_1=\max\{e_2, e_3, e_4\}$ and making use of formulas \eqref{a5.10}, \eqref{a5.13} and \eqref{a5.16} yield
 \begin{align*}
\bigcup_{\e\in [a, 1]}U_\e(t, \tau)B(\tau)&\subset \begin{cases} &\bigcup_{\e\in [a, 1]}U_\e(t, \tau)\mathbb B_\tau (\mathcal R_3)\subset \mathbb B_t (\mathcal R_3),\\
&  \bigcup_{\e\in [a, 1]}U_\e(t, \tau)\left[\mathbb B_\tau (\mathcal R_3)\cap
\mathbb B^{1/3}_\tau \left(\mathcal R_5\right)\right]\subset \mathbb B^{1/3}_t\left(\mathcal R_5\right),\\
& \bigcup_{\e\in [a, 1]}U_\e(t, \tau)\left[\mathbb B^{1/3}_\tau (\mathcal R_5)\cap \mathbb B^1_\tau \left(\mathcal R_7\right)\right]\subset \mathbb B^1_t\left(\mathcal R_7\right),
\end{cases}
\end{align*}
for all $t\geq \tau+T_1$.  Therefore,
\begin{equation*}
\bigcup_{\e\in [a, 1]}U_\e(t, \tau)B(\tau)\subset \mathbb B_t\left(\mathcal R_3\right) \cap \mathbb B^{1/3}_t(\mathcal R_5) \cap \mathbb B^1_t(\mathcal R_7)=B(t), \ \ \forall t\geq \tau+T_1.
\end{equation*}
This completes the proof.
\end{proof}

\begin{proof}[\textbf{Proof of Theorem \ref{54}}] For any $\e_0\in (0, 1]$, there must be an interval $[a, 1]\subset (0, 1]$ such that $\e_0\in [a, 1]$.   Lemma \ref{lemma8} shows that there exists a family $\mathcal B=\{B(t)\}_{t\in \mathbb R}\in \mathscr D$, with the properties (i)-(iii) there.

 Take $T> T_1$ satisfying $\eta:=\q_a(\mathcal R)e^{-\kappa T}<1/4$,  formulas \eqref{d1} and \eqref{a5.3} mean that condition $(H_1)$ of Assumption \ref{assumptionea} holds, with $X_t=\h_t$ and $\Lambda=[a,1]$ there.

Let the  space
\begin{equation*}
Z=\{\phi\in C([0, T]; V_1)\ | \ \phi_t\in C([0, T]; L^2)\}
\end{equation*}
be equipped with the  norm
\begin{equation*}
  \|\phi\|_Z=\sup_{s\in[0, T]}\|(\phi(s),\phi_t(s))\|_{V_1\times L^2}.
\end{equation*}
Obviously, $Z$ is a Banach space, and
\[ n_Z(\phi)=\q_a(\mathcal R)\sup_{s\in[0, T]}\|\phi(s)\|, \ \ \forall \phi\in Z\]
 is a compact seminorm on $Z$ (cf. \cite{Simon}).  Taking $t_0=0$, we define the mapping
\begin{equation*}
 K^\e_t: B(t-T)\subset \h_{t-T}\rightarrow Z, \ \ K^\e_t\xi=u_\e(\cdot+t-T),\ \ \forall \xi\in B(t-T),\ \ \e\in [a, 1],\ \ t\leq t_0,
\end{equation*}
where $u_\e(\cdot+t-T) $ means $ u_\e(s+t-T), s\in[0,T]$ and
\begin{equation*}
(u_\e(s+t-T),u_{\e t}(s+t-T))=U_\e (s+t-T, t-T)\xi.
\end{equation*}
Formula \eqref{4.10} shows that for all $\xi_1, \xi_2\in B(t-\tau)(\subset \mathbb B_{t-\tau}(\mathcal R_0)\cap \mathbb B^1_{t-\tau}(\mathcal R))$, $\tau\in [0, T]$,
\begin{equation*}
\sup_{\e\in [a,1]}\|U_\e(t, t-\tau)\xi_1- U_\e(t, t-\tau)\xi_2\|_{\h_t}\leq L_1\|\xi_1-\xi_2\|_{\h_{t-\tau}},\ \ \forall t\in \mathbb{R},
\end{equation*}
with $L_1=\q_a(\mathcal R_0)e^{\q_a(\mathcal R_0) T}$, and Lemma \ref{lemma6} shows that when
  $t\leq t_0=0$,
\begin{equation*}
\|U_\e(t, t-T)\xi_1- U_\e(t, t-T)\xi_2\|_{\h_t}\leq \eta\|\xi_1-\xi_2\|_{\h_{t-T}}+n_Z\left( K^\e_t\xi_1- K^\e_t\xi_2 \right),
\end{equation*}
where
\begin{equation*}
\begin{split}
    & \sup_{\e\in [a,1]}\|K^\e_t \xi_1- K^\e_t \xi_2\|_Z\\
=\  & \sup_{\e\in [a,1]}\sup_{s\in [0,T]} \|U_\e (s+t-T, t-T)\xi_1- U_\e (s+t-T, t-T)\xi_2\|_{V_1\times L^2}\\
\leq\ &  \sup_{s\in [0,T]}\left[ 1+\left(\rho(s+t-T)\right)^{-\frac12}\right]\sup_{\e\in [a,1]} \|U_\e (s+t-T, t-T)\xi_1- U_\e (s+t-T, t-T)\xi_2\|_{\h_{s+t-T}}\\
\leq\ & \left[ 1+\left(\rho(0)\right)^{-\frac12}\right]\q_a(\mathcal R)e^{\q_a(\mathcal R) T}\|\xi_1-\xi_2\|_{\h_{t-T}},
\end{split}
\end{equation*}
where we have used the fact that $\rho$ is a decreasing function. That is, the conditions $(H_2)$-$(H_3)$ of Assumption \ref{assumptionea} hold.

Moreover,  Remark \ref{remark54} shows that  the process $U_\lambda(t, \tau)$  has  a uniformly pullback $\mathscr D$-absorbing family $\mathcal D_0=\{\mathbb{B}_t(R_1)\}_{t\in\mathbb{R}}\in \mathscr D$  possessing the properties: (see \eqref{d2}, \eqref{d1} and \eqref{4.10})
\begin{enumerate}[(i)]
  \item
   \begin{equation}\label{14.8}
   \sup_{\e\in [a,1]} \mathrm{dist}_{X_t}\left(U_\e(t, t-\tau)\mathbb{B}_{t-\tau}(R_1), B(t)\right)\leq  \mathcal Q_a(R_1)e^{-\kappa_1\tau}, \ \ \forall t\in \mathbb R, \ \tau\geq \tau_1;
   \end{equation}
  \item  $\mathbb{B}_t(R_1)\subset \mathbb B_t(\mathcal R_0), \forall t\in \mathbb R$ for $\mathcal R_0 >R_1$, and
      \begin{equation}\label{14.9}
      \sup_{\e\in [a, 1]}\|U_\e(t, t-\tau)\xi_1- U_\e(t, t-\tau)\xi_2\|_{\h_t}\leq \mathcal{Q}_a(\mathcal R_0)e^{\mathcal{Q}_a(\mathcal R_0)\tau}\|\xi_1-\xi_2\|_{\h_{t-\tau}},
      \end{equation}
       for all $x, y\in \mathbb B_{t-\tau}(\mathcal R_0)$, $\tau\geq 0$ and $t\in \mathbb R$.
     \end{enumerate}
 That is, the conditions of Corollary \ref{ea2} hold. Therefore,     by  Corollary \ref{ea2}, the process $U_\e(t, \tau)$ has a pullback $\mathscr D$-exponential attractor $\mathcal E_\e=\{E_\e(t)\}_{t\in\mathbb R}$  for each $\e\in [a, 1]$, and   $E_\e(t)\subset B(t)\subset \mathbb B^1_t(\mathcal R)$ for all $t\in \mathbb R$.  By the arbitrariness $a\in (0,1)$ we have
\[\sup_{t\in \mathbb R}\|E_\e(t)\|_{\mathcal {H}_{t, 1}}< +\infty, \ \ \forall \e\in(0,1].\]

  Lemma \ref{lemma7} shows that
\begin{equation*}
\Gamma(\e, \e_0):= \sup_{t\leq t_0} \sup_{s\in [0, T]} \sup_{\xi\in B(t-T)}\|U_\e (t, t-s)\xi- U_{\e_0} (t, t-s)\xi\|_{\h_t}\leq \q_a(\mathcal R)e^{\q_a(\mathcal R) T} |\e-\e_0|.
\end{equation*}
Taking $\delta=\delta(\e_0)=\left(\q_a(\mathcal R)e^{\q_a(\mathcal R) T}\right)^{-1}$, we obtain that  $\Gamma(\e, \e_0)<1$ whenever  $|\e-\e_0|<\delta$. Then
  by  Theorem \ref{42},
\begin{equation*}
\mathrm{dist}_{\h_t}^{symm} \left(E_\e(t), E_{\e_0}(t)\right)\leq C(t)|\e-\e_0|^\gamma\ \ \hbox{as}\ \ |\e-\e_0|<\delta, \ \forall t\in \mathbb R.
\end{equation*}
where $\gamma\in (0, 1)$ is a positive constant. By the arbitrariness of $\e_0\in (0, 1]$, we complete  the proof.
\end{proof}

In order to prove Theorem \ref{52}, we first quote  a few notations   and lemmas on the    pullback $\mathscr D$-attractor.

\begin{definition}\label{asycompact} (Pullback $\mathscr D$-asymptotically compact)  A process $U(t, \tau): X_\tau\rightarrow X_t$ is said to be pullback $\mathscr D$-asymptotically compact if for any  $\{\tau_n\}\subset (-\infty, t]$ with $\tau_n\rightarrow -\infty$, and $y_n\in D(\tau_n)\subset \mathcal D  \in \mathscr D$, the sequence $\{U(t, \tau_n)y_n\}$ is relatively compact in $X_t$.
\end{definition}

\begin{lemma}\label{uniformlycompact}\cite{Kloeden2008} If the process $U(t, \tau): X_\tau\rightarrow X_t$ has a compact pullback $\mathscr D$-attracting family, then it is pullback $\mathscr D$-asymptotically compact.
\end{lemma}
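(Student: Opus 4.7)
The goal is to verify the definition of pullback $\mathscr D$-asymptotic compactness directly using the existence of the compact attracting family. So fix an arbitrary $\mathcal D = \{D(t)\}_{t\in\mathbb R} \in \mathscr D$ and $t \in \mathbb R$, and take sequences $\tau_n \to -\infty$ with $\tau_n \leq t$ and $y_n \in D(\tau_n)$. Denote by $\mathcal K = \{K(t)\}_{t\in\mathbb R}$ the given compact pullback $\mathscr D$-attracting family, and set $x_n := U(t, \tau_n) y_n$. The task reduces to extracting a convergent subsequence of $\{x_n\}$ in $X_t$.

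The key observation is that since $K(t)$ is compact, hence closed, each $x_n$ admits a best approximating element $z_n \in K(t)$ with
\begin{equation*}
d_t(x_n, z_n) = \mathrm{dist}_{X_t}(x_n, K(t)) \leq \mathrm{dist}_{X_t}\bigl(U(t, \tau_n) D(\tau_n), K(t)\bigr).
\end{equation*}
The pullback $\mathscr D$-attracting property applied to $\mathcal D$ yields $\mathrm{dist}_{X_t}(U(t, \tau_n) D(\tau_n), K(t)) \to 0$ as $n \to \infty$, so $d_t(x_n, z_n) \to 0$.

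Now the compactness of $K(t)$ takes over: the sequence $\{z_n\} \subset K(t)$ admits a subsequence $\{z_{n_k}\}$ converging to some $z^* \in K(t)$. A triangle-inequality estimate
\begin{equation*}
d_t(x_{n_k}, z^*) \leq d_t(x_{n_k}, z_{n_k}) + d_t(z_{n_k}, z^*)
\end{equation*}
shows that both terms on the right tend to zero, whence $x_{n_k} \to z^*$ in $X_t$. This establishes that every sequence of the prescribed form has a convergent subsequence in $X_t$, which is exactly the definition of pullback $\mathscr D$-asymptotic compactness.

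There is no serious obstacle here, since the argument is essentially the classical ``compact attractor implies asymptotic compactness'' lemma transplanted to the time-dependent setting; the only subtlety worth flagging is the use of best approximating elements, which is why we needed $K(t)$ to be closed (a consequence of compactness) rather than merely precompact. The proof works verbatim with the metric $d_t$ on $X_t$ and does not require any structural hypothesis on the universe $\mathscr D$ beyond the one already encoded in Definition \ref{asycompact}.
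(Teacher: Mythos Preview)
Your proof is correct and follows the standard argument for this classical fact. Note that the paper does not actually supply a proof of this lemma; it simply cites it from \cite{Kloeden2008}, so there is nothing to compare against beyond observing that your direct verification via best approximants in the compact section $K(t)$ is exactly the expected route.
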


\begin{lemma}\label{existTh}\cite{Kloeden2008} Assume that the  process $U(t, \tau): X_\tau\rightarrow X_t$ is continuous, and
 (i) it has  a pullback $\mathscr D$-absorbing family  $\mathcal B_0=\{B_0(t)\}_{t\in\mathbb R}\in \mathscr D$;
 (ii) it is pullback $\mathscr D$-asymptotically compact.
 Then the family $\mathcal A=\{A(t)\}_{t\in\mathbb R}$, with
\begin{equation*}
A(t):=\bigcap_{s\leq t}\Big[\bigcup_{\tau\leq s} U(t, \tau) B_0(\tau)\Big]_{X_t}, \ \ t\in \mathbb R,
\end{equation*}
is the minimal pullback $\mathscr D$-attractor of $U(t, \tau)$.
\end{lemma}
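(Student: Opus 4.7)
The plan is to identify $A(t)$ with the pullback $\omega$-limit set of the absorbing family $\mathcal B_0$, namely
\[
\omega(\mathcal B_0,t)=\{x\in X_t\ |\ \exists\, \tau_n\to -\infty,\ \exists\, x_n\in B_0(\tau_n)\ \text{s.t.}\ U(t,\tau_n)x_n\to x\},
\]
and then to verify step by step the four defining properties of a pullback $\mathscr D$-attractor (Definition \ref{PDA}) plus minimality. The equivalence $\omega(\mathcal B_0,t)=\bigcap_{s\le t}[\bigcup_{\tau\le s}U(t,\tau)B_0(\tau)]_{X_t}$ is the usual reformulation and is purely set-theoretic, so I would state it as a preliminary observation.

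First, I would establish nonemptiness and compactness of $A(t)$. Picking any $\tau_n\to -\infty$ and any $x_n\in B_0(\tau_n)$, pullback $\mathscr D$-asymptotic compactness (Definition \ref{asycompact}) yields a convergent subsequence of $U(t,\tau_n)x_n$ whose limit lies in $A(t)$ by construction, so $A(t)\neq\emptyset$. Closedness is automatic since $A(t)$ is an intersection of closed sets. For precompactness, take any sequence $\{y_k\}\subset A(t)$, choose approximating points $U(t,\tau_{n_k,k})x_{n_k,k}$ via a diagonal argument so that $\tau_{n_k,k}\to -\infty$, and again apply asymptotic compactness.

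Next, I would prove pullback attraction for every $\mathcal D\in\mathscr D$ by contradiction: assume $\tau_n\to -\infty$ and $y_n\in D(\tau_n)$ exist with $\mathrm{dist}_{X_t}(U(t,\tau_n)y_n,A(t))\ge\delta_0>0$. Using the absorbing property of $\mathcal B_0$ and the cocycle identity $U(t,\tau_n)=U(t,s)U(s,\tau_n)$, one sees that $U(t,\tau_n)y_n$ fits the framework of Definition \ref{asycompact} after replacing $y_n$ by $U(s,\tau_n)y_n\in B_0(s)$ for large $n$; the resulting limit point lies in $A(t)$, a contradiction. For invariance, the inclusion $U(t,\tau)A(\tau)\subset A(t)$ is immediate from continuity of $U(t,\tau)$ and the cocycle property: if $x=\lim U(\tau,\tau_n)x_n\in A(\tau)$ then $U(t,\tau)x=\lim U(t,\tau_n)x_n\in A(t)$. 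The reverse inclusion $A(t)\subset U(t,\tau)A(\tau)$ is the delicate step: given $x=\lim U(t,\tau_n)x_n\in A(t)$, absorption ensures $U(\tau,\tau_n)x_n\in B_0(\tau)$ for $n$ large, asymptotic compactness at time $\tau$ produces a subsequential limit $y\in A(\tau)$, and continuity of $U(t,\tau)$ together with the cocycle identity gives $U(t,\tau)y=x$.

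Finally, minimality is a one-line consequence of the definition: if $\mathcal C=\{C(t)\}$ is a pullback $\mathscr D$-attracting family of nonempty closed sets, then since $\mathcal B_0\in\mathscr D$ we have $\mathrm{dist}_{X_t}(U(t,\tau)B_0(\tau),C(t))\to 0$, so every $x=\lim U(t,\tau_n)x_n\in A(t)$ satisfies $\mathrm{dist}_{X_t}(x,C(t))=0$, whence $x\in C(t)$ by closedness. I expect the reverse invariance inclusion in the invariance step to be the main obstacle, since it is the only place where continuity of $U(t,\tau)$ in its second slot (rather than just its first) and asymptotic compactness must be combined with the cocycle law; the remaining arguments are either diagonal extractions or direct consequences of the absorbing/attracting hypotheses.
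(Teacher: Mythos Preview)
The paper does not supply a proof of this lemma; it is quoted verbatim from Kloeden, Mar\'in-Rubio and Real \cite{Kloeden2008}, so there is no argument in the paper to compare against. Your outline follows the standard route used in that reference: identify $A(t)$ with the pullback $\omega$-limit of the absorbing family, and verify compactness, attraction, invariance, and minimality in turn.

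Your sketch is essentially correct, but two points deserve tightening. First, in the attraction step you write that ``$U(t,\tau_n)y_n$ fits the framework of Definition \ref{asycompact} after replacing $y_n$ by $U(s,\tau_n)y_n\in B_0(s)$'': this is not quite how the argument runs. One applies asymptotic compactness directly to the original sequence $y_n\in D(\tau_n)$ (Definition \ref{asycompact} allows any $\mathcal D\in\mathscr D$), obtaining a limit point $x$; the absorbing property at each auxiliary time $s\le t$ then shows $U(t,\tau_n)y_n\in U(t,s)B_0(s)\subset\bigcup_{\tau\le s}U(t,\tau)B_0(\tau)$ for $n$ large, so $x$ lies in every set of the intersection defining $A(t)$. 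A single fixed $s$ is not enough; one must let $s$ range over all values $\le t$. Second, your closing remark about needing ``continuity of $U(t,\tau)$ in its second slot'' is a misreading: the hypothesis is only that each $U(t,\tau):X_\tau\to X_t$ is continuous as a map, and that is exactly what is used to pass the limit $U(t,\tau)y=\lim U(t,\tau)U(\tau,\tau_n)x_n=\lim U(t,\tau_n)x_n$ in the reverse invariance inclusion. No joint or parametric continuity in $\tau$ is required.
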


\begin{proof}[\textbf{Proof of Theorem \ref{52}}]For any $n\in\mathbb{N}^+=\{1,2,\cdots\}$, let
\begin{equation}\label{e1}
[a_n, 1]=[ 1/(n+1), 1].
\end{equation}
Obviously, $\cup_{n\in\mathbb{N}^+}[a_n, 1]=(0, 1]$.
\medskip

(i) Remark \ref{remark54} shows that the process  $U_\e(t, \tau)$ has a uniformly pullback $\mathscr D$-absorbing family $\{\mathbb B_t(R_1)\}_{t\in \mathbb R}$, that is,
 for any  $\mathcal D\in \mathscr D$, there exists a constant  $e(\mathcal D)>0$ such that
\begin{equation}\label{e2}
 \bigcup_{\e\in [a_n,1]} U_\e(t, t-\tau)D(t-\tau)\subset  \mathbb B_t(R_1), \ \ \forall \tau\geq e(\mathcal D), \  t\in \mathbb R.
\end{equation}
   Lemma \ref{lemma8} shows that there exists a   family $\mathcal B=\{B(t)\}_{t\in \mathbb R}\in \mathscr D$ possessing the properties (i)-(iii) there.
  Formula \eqref{d1} implies that the section $B(t)$ is compact in $\h_t$ for each $t\in \mathbb R$ because of  $\h_{t, 1}\hookrightarrow\hookrightarrow \h_t$.
The combination of \eqref{d2} and \eqref{e2} receives
\begin{equation*}
  \begin{split}
    &\sup_{\e\in [a_n,1]}\mathrm{dist}_{\h_t}\left(U_\e(t,t-\tau)D(t-\tau), B(t)\right)\\
  \leq\ & \sup_{\e\in [a_n,1]}\mathrm{dist}_{\h_t}\left(U_\e(t, t-\tau+e(\mathcal D))U_\e(t-\tau+e(\mathcal D), t-\tau)D(t-\tau), B(t)\right) \\
       \leq\ & \sup_{\e\in [a_n,1]}\mathrm{dist}_{\h_t}\left(U_\e(t, t-\tau+e(\mathcal D))\mathbb B_{t-\tau+e(\mathcal D)}(R_1), B(t)\right)\\
     \leq\ & \q_{a_n}(R_1)e^{\kappa_1 e(\mathcal D)}e^{-\kappa_1\tau}, \ \ \forall \tau\geq e(\mathcal D)+\tau_1, \ t\in\mathbb{R},
  \end{split}
\end{equation*}
that is, $\mathcal B=\{B(t)\}_{t\in \mathbb R}$ is a compact pullback $\mathscr D$-attracting family of the process $U_\e(t, \tau)$.   Therefore, by   Lemma \ref{uniformlycompact} and Lemma \ref{existTh},    the family $\mathcal A_\e=\{A_\e(t)\}_{t\in \mathbb R}$, with
\begin{equation*}
A_\e(t):=\bigcap_{s\leq t}\Big[\bigcup_{\tau\leq s} U_\e(t, \tau)\mathbb B_\tau(R_1)\Big]_{\h_t}, \ \ t\in \mathbb R
\end{equation*}
is the minimal pullback $\mathscr D$-attractor of the process $U_\e(t, \tau)$, and (see  \eqref{d1})
\begin{equation}\label{e3}
\bigcup_{\e\in [a_n,1]}A_\e(t)\subset B(t)\subset\mathbb B^1_t(\mathcal R), \ \ \forall t\in \mathbb R,
\end{equation}
where the constant $\mathcal R=\mathcal R(a_n)$. Therefore,  by the arbitrariness of $n\in \mathbb{N}^+$,
\begin{equation*}
    \sup_{t\in \mathbb R}\|A_\e(t)\|_{\mathcal {H}_{t, 1}}<\infty, \ \ \forall \e\in(0,1].
    \end{equation*}
   We know from  the minimality  of $\mathcal A_\e$ that $A_\e(t)\subset E_\e(t)$ for all $t\in \mathbb R$, where $\mathcal E_\e=\{E_\e(t)\}_{t\in \mathbb R}$  is a pullback $\mathscr D$-exponential attractor of the process $U_\e(t, \tau)$ as shown in  Theorem \ref{54}.  Therefore,
\begin{equation*}
\sup_{t\in \mathbb R} \mathrm{dim}_f\left(A_\e(t), \h_t\right)\leq \sup_{t\in \mathbb R} \mathrm{dim}_f\left(E_\e(t), \h_t\right)<+\infty, \ \ \forall \e\in(0,1].
\end{equation*}

(ii) (Upper semicontinuity) For any $\e_0\in (0,1]$, there must be  $\e_0\in \Lambda_n:=[a_n, 1]$ for some $n\in \mathbb{N}^+$.   For any $\xi\in B(t) (\subset \mathbb{B}^1_t(\mathcal{R}))$, by  Lemma \ref{lemma7}  we have
\begin{equation*}
 \sup_{\xi\in B(\tau)}\|U_\e(t, \tau)\xi-U_{\e_0}(t, \tau)\xi\|_{\h_t}\leq \q_{a_n}(\mathcal R)e^{\q_{a_n}(\mathcal R)(t-\tau)}|\e-\e_0|\rightarrow0\ \  \hbox{as }\ \ \e\rightarrow\e_0.
\end{equation*}
 And  $\mathcal A_\e$ pullback attracts $\mathcal{B}=\{B(t)\}_{t\in \mathbb R}$ for all $\e\in \Lambda_n$ for $\mathcal{B}\in \mathscr D$, that is,
 \begin{equation} \label{4.59}
 \lim_{\tau\rightarrow -\infty}\mathrm{dist}_{\h_t}\left(U_\e(t,\tau)B(\tau), A_\e(t)\right)=0, \  \ \forall t\in \mathbb{R}, \ \e\in \Lambda_n.
 \end{equation}
 Formulas \eqref{e3}-\eqref{4.59} mean  that the conditions $(L_1)$-$(L_3)$ of Assumption \ref{3.1'} hold for $\e\in \Lambda_n$. By Theorem \ref{31}, the pullback $\mathscr D$-attractor $\mathcal A_\e$ is upper semicontinuous at  $\e_0$. By the arbitrariness of $\e_0\in (0,1]$, $\mathcal A_\e$ is upper semicontinuous on  $(0,1]$.
\medskip

(iii) (Residual continuity) Since   $\Lambda_n=[a_n, 1]$ is a compact metric space for each $n\in \mathbb{N}^+$, we  infer from Theorem \ref{32} that there exists a residual subset $\Lambda^*_n\subset \Lambda_n$ such that $\mathcal A_\e$ is continuous on $\Lambda^*_n$. Let
\begin{equation*}
\Lambda^*=\bigcup_{n\in\mathbb{N}^+}\Lambda^*_n.
\end{equation*}
Then $\Lambda^*$ is still a residual subset of $(0,1]$ and $\mathcal A_\e$ is continuous on $\Lambda^*$ for the arbitrariness of $n\in\mathbb{N}^+$.
\end{proof}
\begin {thebibliography}{90} {\footnotesize

\bibitem{Amann}
H. Amann, Nonhomogeneous linear and quasilinear elliptic and parabolic boundary value problem, in: Schmeisser$/$Triebel: Function Spaces, Differential Operators and Nonlinear Analysis, Teubner Texte zur Mathematik, vol. 133, Teubner, 1993, pp. 9-126.

\bibitem{Aragao} G. S. Arag\~{a}o, F. D. M. Bezerra, R. N. Figueroa-L\'{o}pez, M. J. D. Nascimento, Continuity of pullback attractors for evolution processes associated with semilinear damped wave equations with time-dependent coefficients, J. Differential Equations 298 (2021) 30-67.

\bibitem{Arrieta} J. Arrieta, A. N. Carvalho, J. K. Hale, A damped hyperbolic equation with critical exponent, Comm. Partial Differential Equations 17 (1992) 841-866.

 \bibitem{19} J. M. Arrieta, A. N. Carvalho,  Spectral convergence and nonlinear dynamics of reactiondiffusion equations under perturbations of the domain, J. Differential Equations 199 (2004) 143-178.

\bibitem{Babin1} A. V. Babin, S. Yu  Pilyugin, Continuous dependence of attractors on the shape of domain, J.  Math.  Sci. 87 (1997) 3304-3310.

\bibitem{22} F. D. M. Bezerra,  A. N. Carvalho,  J. W. Cholewa,  M. J. D.   Nascimento,  Parabolic approximation of damped wave equations via fractional powers: fast growing nonlinearities and continuity of dynamics,  J. Math. Anal. Appl. 450 (2017) 377-405.

\bibitem{Bortolan} M. C. Bortolan, A. N. Carvalho, J. A. Langa,  Attractors under autonomous and non-autonomous perturbations, Mathematical Surveys and Monographs, 246. Amer. Math. Soc., Providence, RI, 2020.

 \bibitem{27}  S. M. Bruschi, A. N. Carvalho, Upper semicontinuity of attractors for the discretization of strongly damped wave equation, Mat. Contemp.  32 (2007) 39-62.

\bibitem{29}  T. Caraballo,  A.  N. Carvalho,  H.  B. Costa,   J. A. Langa, Equi-attraction and continuity of attractors for skew-product semiflows,  Discrete Contin. Dyn. Syst. Ser. B 21 (2016) 2949-2967.

 \bibitem{31} A. N. Carvalho,  T. D{\l}otko,  H. M. Rodrigues, Upper semicontinuity of attractors and synchronization, J.  Math. Anal. Appl. 220 (1998) 13-41.

\bibitem{Carvalho1} A. N. Carvalho, J. A. Lange, J. C. Robinson,
On the continuity of pullback attractors for evolution processes, Nonlinear Anal. 71 (2009)  1812-1824.

\bibitem{Chueshov2008} I. Chueshov, I. Lasiecka, Long-time behavior of second order evolution equations with nonlinear damping. Amer. Math. Soc., Providence, RI, 2008.

\bibitem{Chueshov2015} I. Chueshov,  Dynamics of Quasi-Stable Dissipative Systems, Springer, New York,  2015.

\bibitem{patacpaa} M. Conti, V. Pata, Weakly dissipative semilinear equations of viscoelasticity,  Commun. Pure Appl. Anal. 4 (2005) 705-720.

\bibitem{pata} M. Conti, V. Pata, R. Temam, Attractors for the processes on time-dependent spaces. Application to wave equations,  J. Differential Equations 255 (2013) 1254-1277.

 \bibitem{PataNARWA} M. Conti, V. Pata, Asymptotic structure of the attractor for processes on time-dependent spaces,  Nonlinear Anal., Real World Appl. 19 (2014) 1-10.

 \bibitem{Conti2015AMC} M. Conti, V. Pata, On the time-dependent Cattaneo law in space dimension one,  Appl. Math. Comput.  259 (2015) 32-44.

\bibitem{PataAJM1} M. Conti, V. Danese,  C. Giorgi, V. Pata,  A model of viscoelasticity with time-dependent memory kernels,  Amer. J. Math. 140 (2018) 349-389.

\bibitem{PataAJM2} M. Conti, V. Danese, V. Pata, Viscoelasticity with time-dependent memory kernels,II: asymptotical behavior of solutions, Amer. J. Math.   140 (2018) 1687-1729.

 \bibitem{Di2011} F. Di Plinio, G. S. Duane, R. Temam,  Time-dependent attractor for the oscillon equation, Discrete Contin. Dyn. Syst.  29 (2011) 141-167.

 \bibitem{70} A. Eden, C. Foias,  B. Nicolaenko, R. Temam,  Exponential attractors for dissipative evolution equations, John-Wiley, New York, 1994.

 \bibitem{73}  M. Efendiev,  A. Miranville,  S. Zelik,  Exponential attractors and finite-dimensional reduction for non-autonomous dynamical systems,  Proc. Roy. Soc. Edinburgh Sect. A 135 (2005) 703-730.

 \bibitem{Freitas} M. M. Freitas, P. Kalita, J. A. Langa, Continuity of non-autonomous attractors for hyperbolic perturbation of parabolic equations,  J. Differential Equations 264 (2018) 1886-1945.

\bibitem{Grasselli} M. Grasselli, V. Pata, Asymptotic behavior of a parabolic-hyperbolic system, Commun. Pure Appl. Anal. 3 (2004) 849-881.

  \bibitem{90} J. K. Hale,  R. Genevi\`{e}ve, Lower semicontinuity of attractors of gradient systems and applications,  Ann.  Mat.  Pura Appl, 154(1989) 281-326.

 \bibitem{Hale1990} J. K. Hale,  G. Raugel,   Lower semicontinuity of the attractor for a singularly perturbed hyperbolic equation, J.  Dyn.  Differ. Equ. 2 (1990) 19-67.

\bibitem{Hoang1} L. T. Hoang, E. J. Olason, J. C. Robinson, On the continuity of global attractors, Proc. Amer. Math. Sc. 143 (10) (2015) 4389-4395.

\bibitem{Hoang2} L. T. Hoang, E. J. Olason, J. C. Robinson, Continuity of pullback and uniform attractors, J. Differential Equations 264 (2018) 4067-4093.

\bibitem{Kloeden2008}P. E. Kloeden, P. Mar\'io-Rubio, J. Real, Pullback attractors for a semilinear heat equation in a non-cylindrical domain,
J. Differential Equations  244 (2008) 2062-2090.

\bibitem{Kloeden2009} P. E. Kloeden, J. Real, C. Y. Sun,  Pullback attractors for a semilinear heat equation on time-varying domains,  J. Differential Equations,   246 (2009) 4702-4730.

\bibitem{Kloeden2} D. S. Li, P. E. Kloeden,  Equi-attraction  and the continuous dependence of attractors on paramaters,  Glasgow Math.  J. 46 (2004) 131-142.

 \bibitem{Kloeden3} D. S. Li, P. E. Kloeden, Equi-attraction  and the continuous dependence of pullback attractors on paramaters, Stoch. Dyn. 4 (2004) 373-384.

 \bibitem{Ly-YJDDE} Y. N. Li, Z. J. Yang, Exponential attractor for the viscoelastic wave model with time-dependent memory kernels, J.  Dyn.  Differ. Equ. 2021, https://doi.org/10.1007/s10884-021-10035-z.

\bibitem{LiyangrongJDDE}Y. R. Li, S. Yang, Hausdorff sub-norm spaces and continuity of random attractors for bi-stochastic g-Navier-Stokes Equations with respect to tempered forces, J.  Dyn.  Differ. Equ. 2021,
https://doi.org/10.1007/s10884-021-10026-0.

\bibitem{MTF} T. F. Ma, P.  Mar\'io-Rubio, C. M. Surco Ch\~uno, Dynamics
of wave equations with moving boundary, J. Differential Equations 262 (2017) 3317-3343.

\bibitem{Meng2016} F.  J.  Meng, M. H. Yang, C. K. Zhong, Attractors for wave equations with nonlinear damping on time-dependent space, Discrete Contin. Dyn. Syst. B 21 (2016) 205-225.

\bibitem{Oxtoby} J. C. Oxtoby,  Measure and Category, 2nd ed, Springer-Verlag, New York, 1980.

\bibitem{Simon}
J. Simon,   Compact sets in the space $L^p(0,T;B)$,  Ann. Mat. Pura Appl.  146 (1986) 65-96.

  \bibitem{Song2019}  X. Y. Song,  C. Y. Sun, L. Yang, Pullback attractors for 2D Navier-Stokes equations on time-varying domains, Nonlinear Anal.,  Real World Appl.  45 (2019) 437-460.

\bibitem{Stuart}A. M. Stuart,  A. R. Humphries, Dynamical Systems and Numerical Analysis, Cambridge Monographs on Applied and Computational Mathematics,  Cambridge University Press, Cambridge, 1996.

\bibitem{Sun2015}  C. Y. Sun, Y. B. Yuan,  $L^p$-type pullback attractors for a semilinear heat equation on time-varying domains,  Proc. Roy. Soc. Edinburgh Sect. A 145 (2015) 1029-1052.

\bibitem{150} Y. H. Wang, C. K. Zhong,  Upper semicontinuity of pullback attractors for nonautonomous Kirchhoff wave models,  Discrete Contin. Dyn. Syst.  33 (2013)  3189-3209.

 \bibitem{Xiao2015}  Y.  P. Xiao,  C. Y. Sun,  Higher-order asymptotic attraction of pullback attractors for a reaction-diffusion equation in non-cylindrical domains,  Nonlinear Anal. 113 (2015) 309-322.

\bibitem{Y-LyPEA} Z. J. Yang, Y. N. Li, Criteria on the existence and stability of pullback exponential attractors and
 their application to non-autonomous Kirchhoff wave models, Discrete Contin. Dyn. Syst.  38 (2018) 2629-2653.

\bibitem{Zhou20181} F. Zhou, C. Y. Sun, J. Q. Cheng, Dynamics for the complex Ginzburg-Landau equation on non-cylindrical domains II: The monotone case, J. Math. Phys.   59 (2018) 022703.
}

\end{thebibliography}
\end{document}